\documentclass[a4paper]{article}
\usepackage{amssymb}
\usepackage{amsmath}
\usepackage{bm}
\usepackage{verbatim}
\usepackage{algorithm,algpseudocode}
\usepackage{amsthm}
\usepackage{graphicx}
\usepackage{subfigure}
\usepackage{float}
\usepackage{tikz}
\usepackage{xcolor}
\usepackage{verbatim}
\usepackage{CJK}
\usepackage{mathrsfs}
\usepackage{hyperref}
\usepackage{booktabs}
\usepackage{multirow}
\usepackage{makecell}
\usepackage[nohead,margin=1.0in]{geometry}
\usepackage[title]{appendix}

\newtheorem{theorem}{Theorem}[section]

\newtheorem{lemma}[theorem]{Lemma}
\newtheorem*{remark}{Remark}
\newtheorem{assumption}{Assumption}

\title{DiLO: Decoupling Generative Priors and Neural Operators via Diffusion Latent Optimization for Inverse Problems}

\author{Haibo Liu \footnotemark[1]
\and Guang Lin \footnotemark[2]}

\begin{document}

\maketitle

\footnotetext[1]{School of Engineering, Westlake University, Hangzhou 310030, P.R. China. (\url{haibo.liu.research@gmail.com})}
    
\footnotetext[2]{Department of Mathematics, School of Mechanical Engineering, Purdue University, 610 Purdue Mall, West Lafayette, IN 47907, USA. (\url{guanglin@purdue.edu})}

\begin{abstract}
Diffusion models have emerged as powerful generative priors for solving PDE-constrained inverse problems. Compared to end-to-end approaches relying on massive paired datasets, explicitly decoupling the prior distribution of physical parameters from the forward physical model, a paradigm often formalized as Plug-and-Play (PnP) priors, offers enhanced flexibility and generalization. To accelerate inference within such decoupled frameworks, fast neural operators are employed as surrogate solvers. However, directly integrating them into standard diffusion sampling introduces a critical bottleneck: evaluating neural surrogates on partially denoised, non-physical intermediate states forces them into out-of-distribution (OOD) regimes. To eliminate this, the physical surrogate must be evaluated exclusively on the fully denoised parameter, a principle we formalize as the Manifold Consistency Requirement. To satisfy this requirement, we present Diffusion Latent Optimization (DiLO), which transforms the stochastic sampling process into a deterministic latent trajectory, enabling stable backpropagation of measurement gradients to the initial latent state. By keeping the trajectory on the physical manifold, it ensures physically valid updates and improves reconstruction accuracy. We provide theoretical guarantees for the convergence of this optimization trajectory. Extensive experiments across Electrical Impedance Tomography, Inverse Scattering, and Inverse Navier-Stokes problems demonstrate DiLO's accuracy, efficiency, and robustness to noise.
\end{abstract}

\section{Introduction}

Inverse problems aim at recovering hidden physical properties from indirect observations. For example, Electrical Impedance Tomography (EIT) tries to reconstruct the internal conductivity distribution of a patient from electrical signals received from the skin \cite{borcea2002electrical, holder2004electrical}. Inverse Scattering Problem, crucial for seismic exploration and non-destructive testing, aims at reconstructing the refractive index from far-field or near-field data \cite{colton2019inverse, van2012extended}. Furthermore, inverse problems in fluid dynamics recover the initial state from lateral boundary measurements in a Navier–Stokes (N-S) system\cite{raissi2019physics}. Despite their broad application, these problems remain challenging due to highly nonlinear physical models and the ill-posedness caused by complex underlying parameter distributions.

Traditionally, adjoint-based iterative methods were developed to solve problems involving PDE constraints, combined with various regularization techniques to address the severe ill-posedness. However, these methods suffer from high computational costs, and artificially imposed regularization terms limit their application scenarios while increasing the difficulty of parameter tuning \cite{vogel2002computational, bakushinsky2004iterative}. To bridge this gap, deep learning has emerged as a powerful alternative for data-driven reconstruction. In particular, generative models trained on underlying data distributions provide strong priors for solving inverse problems \cite{chen2024solving, denker2025deep, chen2025implicit}.

Recently, diffusion models \cite{ho2020denoising, song2020score} have proven to be highly effective generative priors, setting new benchmarks in inverse problems through unsupervised score-based sampling methods \cite{song2022solving}, Diffusion Posterior Sampling (DPS) \cite{chung2022diffusion}, and variational regularization techniques like RED-diff \cite{mardani2023variational}. Recent benchmarking efforts, such as InverseBench \cite{zheng2025inversebench}, further demonstrate the efficacy of these models. Advanced optimization and consistency techniques, such as exploiting the inherent denoising capabilities of diffusion priors via projection gradient descent (ProjDiff) \cite{zhang2024unleashing} or accelerating data consistency steps using deep learning (DDC) \cite{chen2024deep}, have been developed to improve reconstruction speed and robustness. However, most diffusion-based solvers are designed for linear or mildly nonlinear operators \cite{jiang2025ode}. For complex PDE-constrained problems, the computational burden of iteratively evaluating the forward model during the reverse diffusion process is prohibitive. 

To address these challenges, recent research has explored the integration of generative modeling and operator learning. Early hybrid approaches attempted to formulate diffusion models as probabilistic neural operators \cite{haitsiukevich2024diffusion}, physics-aware generative operators for discontinuous inputs (e.g., DGenNO) \cite{zang2025dgno}, or explicitly embed discretized physical constraints into score-based models like CoCoGen \cite{jacobsen2024cocogen} and continuous flow matching models like PCFM \cite{utkarsh2024physics}. Alongside generative models, advancements in operator learning have directly targeted inverse problems through novel architectures like Invertible Fourier Neural Operators (iFNO) \cite{long2024invertible} and Physics-Informed Deep Inverse Operator Networks (PI-DIONs) \cite{cho2025physics}, which learn the solution operator without massive labeled datasets. For problems with partial observations, frameworks like DiffusionPDE \cite{huang2024diffusionpde} proposed modeling the joint distribution of PDE coefficients and solutions. While effective at handling arbitrary sparsity, such joint-embedding methods carry a heavy computational burden: they require training a task-specific diffusion model from scratch on massive datasets of paired observations for every new physical system.

Consequently, the focus has shifted toward explicitly decoupling the generative prior from the physical constraints, a paradigm central to plug-and-play priors. Methods like Function-space Diffusion Posterior Sampling (FunDPS) \cite{Yao2025GuidedDS} and the Decoupled Diffusion Inverse Solver (DDIS) \cite{lin2026decoupled} successfully mitigate the guidance attenuation inherent to joint-embedding models by applying plug-and-play guidance or decoupled noise annealing to functional priors. However, evaluating the physical surrogate within these iterative sampling loops requires feeding it partially denoised approximations—typically posterior means derived via Tweedie's estimate ($\hat{x}_0(x_t)$). Because physical surrogates are trained exclusively on clean, fully-resolved data manifolds with sharp high-frequency details, feeding them these blurry, non-physical intermediate states forces the operators into OOD regimes, leading to inaccurate guidance and gradient instability. Furthermore, directly modeling diffusion in the ambient function space inherently limits their capacity to capture highly complex underlying parameter distributions.

Building upon these insights, we introduce \textbf{Diffusion Latent Optimization (DiLO)}, a novel framework designed to address the bottlenecks of plug-and-play decoupled diffusion inverse solvers. First, it operates within a compressed latent space to strictly decouple the diffusion prior from the physical model, thereby bypassing dimensional limitations and eliminating the need for task-specific retraining. Second, to overcome the aforementioned OOD errors, we posit that a stable solver must evaluate the physical surrogate exclusively on fully denoised, physically valid parameters—a principle we define as the Manifold Consistency Requirement. DiLO inherently satisfies this requirement by breaking away from stochastic sampling and reformulating the inverse problem as a deterministic optimization over the initial latent noise. This deterministic trajectory ensures the surrogate is evaluated strictly on fully denoised parameters, naturally bypassing the guidance instability that hinders existing methods. With rigorous theoretical guarantees for geometric convergence, DiLO achieves excellent reconstruction accuracy and exceptional noise robustness across three challenging physical domains: EIT, Inverse Scattering, and Inverse N-S. Table \ref{tab:comparison_methods} presents a systematic comparison between DiLO and representative PDE-constrained solvers, highlighting fundamental differences in inference paradigms, prior spaces, data requirements, and physics evaluation mechanisms.

\begin{table}[htbp]
    \centering
    \caption{Comparison of DiLO with existing PDE-constrained inverse solvers. DiLO uniquely combines a compressed latent prior with a deterministic, OOD-free physics guidance mechanism. Abbreviations: NO = Neural Operator, Gen. = Generative, Opt. = Optimization, OOD = Out-of-Distribution, Req. = Required.}
    \label{tab:comparison_methods}
    \small 
    \setlength{\tabcolsep}{2.5pt}
    \begin{tabular}{lccccc}
        \toprule
        \textbf{Property} & \textbf{PINN} & \textbf{Direct NO} & \textbf{DiffusionPDE} & \textbf{DPS/FunDPS} & \textbf{DiLO (Ours)} \\
        \midrule
        \textbf{Paradigm} & Physics Opt. & End-to-End & Joint Gen. & Decoupled & \textbf{Decoupled} \\
        \textbf{Prior Space} & None & None & Function Space & Ambient/Func. & \textbf{Latent Space} \\
        \textbf{Paired Data} & No (PDE) & Yes (Massive) & Yes (Massive) & No (Unpaired) & \textbf{No (Unpaired)} \\
        \textbf{Retraining$^\dagger$} & Yes & Yes & Yes & No & \textbf{No} \\
        \textbf{Physics Eval.} & Clean & N/A & N/A (Joint) & Noisy (OOD Risk) & \textbf{Clean (No OOD)} \\
        \textbf{Trajectory} & Deterministic & Feed-forward & Stochastic & Stochastic & \textbf{Det. Opt.} \\
        \textbf{Physics Cost} & High (PDE) & Low (Surrogate) & N/A & Varies & \textbf{Low (Surrogate)} \\
        \bottomrule
        \multicolumn{6}{l}{\footnotesize $^\dagger$ Indicates if the model requires retraining when the observation operator changes.}
    \end{tabular}
\end{table}

This paper is structured as follows. Section \ref{Background} introduces the mathematical framework for the target inverse problems and reviews diffusion models. Section \ref{method} details the proposed DiLO framework and provides a rigorous theoretical convergence analysis. Section \ref{Experiment} evaluates DiLO's accuracy and extreme noise robustness against baseline methods. Finally, Section \ref{Discussion} concludes the paper and discusses future research directions.

\section{Background.}\label{Background}

\subsection{Mathematical Framework.}

Let $\Omega \subset \mathbb{R}^d$ be a bounded domain with a smooth boundary $\partial\Omega$. We define the computational domain as $X = \Omega$ (stationary) or $X = \Omega \times (0, T)$ (time-dependent), with boundary $\partial X$. Let $a \in \mathcal{A}$ be an unknown physical parameter (e.g., a coefficient field) in an admissible function space. 

The forward PDE is formulated as:
\begin{equation} \label{eq:abstract_pde}
    \mathcal{L}_a u = f \quad \text{in } X, \qquad \mathcal{B} u = g \quad \text{on } \partial X,
\end{equation}
where $\mathcal{L}_a$ and $\mathcal{B}$ are the parameter-dependent differential operator and the boundary operator, respectively. The state variable $u \in \mathcal{U}$, source term $f \in \mathcal{F}$, and boundary condition $g \in \mathcal{G}$ belong to appropriate Banach or Hilbert spaces.

Solving the forward problem yields $u$ given $a$, $f$, and $g$. In practice, the full internal field $u$ is rarely accessible. Instead, we typically rely on boundary measurements, defined by an observation operator:
\begin{equation} \label{eq:observation_op}
    \Lambda_a : \mathcal{G} \to \mathcal{H}, \quad g \mapsto \Lambda_a(g) = h(u)|_{\partial X},
\end{equation}
which maps the boundary input $g$ to a measurement $h(u)$ in space $\mathcal{H}$. For a fixed $f$, this induces the parameter to observable map (or forward operator) $\mathcal{F}$:
\begin{equation} \label{eq:forward_map}
    \mathcal{F} : \mathcal{A} \to \mathcal{L}(\mathcal{G}, \mathcal{H}), \quad a \mapsto \mathcal{F}(a) = \Lambda_a,
\end{equation}
where $\mathcal{L}(\mathcal{G}, \mathcal{H})$ is the space of bounded linear operators from $\mathcal{G}$ to $\mathcal{H}$.

The inverse problem aims to reconstruct $a \in \mathcal{A}$ from measurements $\Lambda_a$. The inverse map is defined as:
\begin{equation} \label{eq:inverse_map}
    \mathcal{F}^{-1} : \text{Ran}(\mathcal{F}) \subset \mathcal{L}(\mathcal{G}, \mathcal{H}) \to \mathcal{A}, \quad \Lambda_a \mapsto a = \mathcal{F}^{-1}(\Lambda_a).
\end{equation}
While recovering $a$ from a single measurement is generally ill posed, utilizing the full observation operator $\Lambda_a$ often restores uniqueness. Theoretical guarantees for inverse problems focus on the uniqueness and stability of $\mathcal{F}^{-1}$. We recommend \cite{colton1990inverse} for a systematic introduction.

Following this abstract mathematical framework, we provide four concrete examples of PDE-constrained inverse problems to which this framework is applicable.

\subsection{EIT.}

EIT is a non-invasive modality that reconstructs the internal conductivity $\sigma$ of a medium from boundary electrical measurements. This technique is highly valued for real time applications such as lung ventilation monitoring and stroke detection due to its portability and lack of ionizing radiation. Based on the quasi-static approximation of Maxwell's equations where displacement currents are neglected, the electric potential $u$ satisfies\cite{borcea2002electrical}:
\begin{equation}
\nabla \cdot (\sigma(\mathbf{x}) \nabla u(\mathbf{x})) = 0. \quad \mathbf{x} \in \Omega \subset \mathbb{R}^2
\end{equation}

The forward problem determines the potential $u$ given a conductivity $\sigma$ and applied current density $j$ via the Neumann boundary condition $\sigma \partial u / \partial \mathbf{n} = j$ on $\partial\Omega$. In practice, measurements are acquired through $M$ discrete electrode patterns, yielding voltage data $V^k = u|_{\partial\Omega}^{(k)}$ for $k = 1, \ldots, M$. We define the forward mapping as $V = \mathcal{G}[\sigma]$, where the numerical solution is typically obtained via the Finite Element Method (FEM).

The inverse problem seeks to invert this mapping to recover the unknown conductivity:
\begin{equation}\label{eit_pro}
\sigma^* = \mathcal{G}^{-1}(V).
\end{equation}
This is formulated as a PDE-constrained optimization task to minimize the discrepancy between observed and predicted voltages:
\begin{equation}
\sigma^* = \arg\min_{\sigma \in \mathcal{A}} \sum_{k=1}^M \|V^k - (\mathcal{G}[\sigma])^k\|^2,
\end{equation}
where $\mathcal{A}$ is the set of admissible conductivities satisfying physical bounds $\sigma_{\min} \leq \sigma \leq \sigma_{\max}$. 

The EIT inverse problem is notoriously ill posed in the sense of Hadamard. This manifests as an exponential decay of singular values in the linearized forward operator, meaning high frequency spatial components are severely attenuated at the boundary. Consequently, reconstructions are highly sensitive to noise.

\subsection{Inverse Scattering.}

We consider two dimensional time harmonic acoustic scattering governed by the Helmholtz equation. This model represents the temporal Fourier transform of the constant density wave equation\cite{colton2019inverse}:
\begin{equation}
\Delta u(\mathbf{x}) + \omega^2 n(\mathbf{x}) u(\mathbf{x}) = 0, \quad \mathbf{x} \in \Omega \subset \mathbb{R}^2
\end{equation}
where $u$ is the total wave field, $\omega$ the angular frequency, and $n(\mathbf{x})$ the refractive index. Defining the perturbation $\eta(\mathbf{x}) = n(\mathbf{x}) - 1$, we assume $ \operatorname{supp}(\eta) \subset \Omega$ within a homogeneous background.

The forward problem determines the scattered field $u^{\text{sc}} = u - u^{\text{in}}$ generated by an incident plane wave $u^{\text{in}} = e^{i\omega s \cdot \mathbf{x}}$ with direction $s \in \mathbb{S}^1$. The scattered field satisfies:
\begin{equation}\label{helmholtz}
\begin{cases}
\Delta u^{\text{sc}} + \omega^2 (1 + \eta) u^{\text{sc}} = -\omega^2 \eta u^{\text{in}} \\
\lim_{|\mathbf{x}| \to \infty} \sqrt{|\mathbf{x}|} \left( \frac{\partial u^{\text{sc}}}{\partial |\mathbf{x}|} - i\omega u^{\text{sc}} \right) = 0
\end{cases}
\end{equation}
where the second line is the Sommerfeld radiation condition ensuring uniqueness. Measurements are acquired on a circle $D$ of radius $R$ enclosing $\Omega$. For receivers at $r \in \mathbb{S}^1$, the scattering data is $\Lambda^{\omega}(r, s) = u^{\text{sc}}(Rr; s)$, defining the forward map $\Lambda^{\omega} = \mathcal{F}^{\omega}[\eta]$.

The inverse problem seeks to reconstruct the perturbation $\eta$ from scattering data across a set of frequencies $\Omega_{freq}$:
\begin{equation}\label{inverse_scattering}
\eta^* = \mathcal{F}^{-1}(\{\Lambda^{\omega}\}_{\omega \in \Omega_{freq}}).
\end{equation}
This is formulated as a PDE-constrained optimization task minimizing the $L^2$ data misfit:
\begin{equation}
\eta^* = \arg\min_{\eta} \sum_{\omega \in \Omega_{freq}} \int_{[0,2\pi]^2} |\Lambda^{\omega}(r, s) - (\mathcal{F}^{\omega}\eta)(r, s)|^2 \, dr \, ds.
\end{equation}

\subsection{Inverse N-S.}
We focus on recovering the initial state of a 2D incompressible fluid from later observations. Under the velocity-vorticity formulation, the Navier-Stokes equations are defined by the scalar vorticity $w(\mathbf{x}, t) = \nabla \times \mathbf{u}$, where $\mathbf{u}$ is the velocity field. Given an initial vorticity distribution $w(\mathbf{x}, 0) = w_0(\mathbf{x})$, the forward problem determines the state of the fluid $w(\mathbf{x}, T)$ at a target time $T$:
\begin{align}\label{ns_forward}
    \partial_t w(\mathbf{x},t) + \mathbf{u}(\mathbf{x},t) \cdot \nabla w(\mathbf{x},t) &= \nu \Delta w(\mathbf{x},t) + f(\mathbf{x}), & \mathbf{x} &\in (0, 2\pi)^2, t \in (0,T] \nonumber \\
    \nabla \cdot \mathbf{u}(\mathbf{x},t) &= 0, & \mathbf{x} &\in (0, 2\pi)^2, t \in [0,T] \\
    w(\mathbf{x},0) &= w_0(\mathbf{x}), & \mathbf{x} &\in (0, 2\pi)^2 \nonumber
\end{align}
where $\nu$ denotes the kinematic viscosity and $f$ represents external forcing.

We denote the forward evolution operator as $\mathcal{S}_T$, such that $w_T = \mathcal{S}_T[w_0]$. The mapping $\mathcal{S}_T$ is highly nonlinear and exhibits sensitive dependence on initial conditions. The inverse N-S problem seeks to reconstruct the initial vorticity field $w_0$ from observations of the vorticity $w_{\text{obs}}$ acquired at time $t=T$. Mathematically, this entails solving:
\begin{equation} \label{ns_inverse}
w_0^* = \mathcal{S}_T^{-1}(w_{\text{obs}}).
\end{equation}
This is formulated as a PDE-constrained optimization problem aimed at minimizing the discrepancy between the evolved state and the observed data:
\begin{equation}
w_0^* = \arg\min_{w_0} \frac{1}{2} \|\mathcal{S}_T[w_0] - w_{\text{obs}}\|^2.
\end{equation}

The inverse N-S problem is inherently ill posed as the diffusion term $\nu \Delta w$ dissipates high-frequency information resulting highly sensitive to noise. Furthermore, nonlinear convection induces a non-convex optimization landscape prone to converging to local minima.

\subsection{Denoising Diffusion Probabilistic Models.}
Given a dataset sampled from an unknown distribution $p_{\text{data}}(x)$, generative models aim to draw samples from this distribution. Diffusion models define a forward stochastic differential equation (SDE) that diffuses the data distribution $p_{\text{data}}(x)$ into a known prior distribution $\pi(x)$ (typically a Gaussian) over a time horizon $t \in [0,T]$:
\begin{equation}
\label{eq:general_sde}
\mathrm{d}x_t = f(t)x_t \mathrm{d}t + g(t)\mathrm{d}w_t,
\end{equation}
where $f(t)$ and $g(t)$ are drift and diffusion coefficients, and $w_t$ is a standard Wiener process. The marginal distribution $p_t(x)$ transitions from the data distribution $p_0(x) = p_{\text{data}}(x)$ to the noise distribution $p_T(x) \approx \mathcal{N}(0, \mathbf{I})$.

The generative process corresponds to the time reversal of ~\eqref{eq:general_sde}. The reverse-time SDE is given by:
\begin{equation}
\label{eq:reverse_sde}
\mathrm{d}x_t = \left[ f(t)x_t - g(t)^2 \nabla_{x_t} \log p_t(x_t) \right] \mathrm{d}t + g(t)\mathrm{d}\bar{w}_t,
\end{equation}
where $\bar{w}_t$ is a standard Wiener process in the reverse time. The critical term here is the score function $\nabla_{x_t} \log p_t(x_t)$, which points towards high density regions of the data distribution at each noise level.

Since the true score function is unknown, it is estimated by a time dependent neural network $s_\theta(x_t, t)$. This network is trained via Denoising Score Matching to minimize:
\begin{equation}
    \mathcal{L}(\theta) = \mathbb{E}_{t, x_0, x_t} \left[ \left\| s_\theta(x_t, t) - \nabla_{x_t} \log p_{t|0}(x_t|x_0) \right\|^2 \right],
\end{equation}
where $t$ is uniformly sampled from $[0, T]$ and the expectation is taken over $t$, $x_t \sim p(x_t|x_0)$, and $x_0 \sim p_{\text{data}}(x)$.

\paragraph{Denoising Diffusion Implicit Models.}
Once trained, the score model $s_\theta$ replaces the true score for sample generation. Samples are typically generated by solving the reverse process using numerical solvers such as Denoising Diffusion Implicit Models (DDIM). To accelerate the generation process, DDIM constructs a non-Markovian diffusion process as a non-Markovian process to remedy this. This enables a faster sampling process with the sampling steps given by
\begin{equation}
    x_{t-1} = \sqrt{\bar{\alpha}_{t-1}}\hat{x}_0(x_t) + \sqrt{1 - \bar{\alpha}_{t-1} - \eta\delta_t^2}\boldsymbol{s}_\theta(x_t, t) + \eta\delta_t\boldsymbol{\epsilon}, \quad t = T, \dots, 0,
\end{equation}
where $\alpha_t = 1 - \beta_t$, $\bar{\alpha}_t = \prod_{i=1}^t \alpha_i$, $\boldsymbol{\epsilon} \sim \mathcal{N}(\boldsymbol{0}, \boldsymbol{I})$, $\eta$ is the temperature parameter, $\delta_t$ controls the stochasticity of the update step, and $\hat{x}_0(x_t)$ denotes the predicted $x_0$ from $x_t$ which takes the form
\begin{equation}\label{Tweedie_estimat}
    \hat{x}_0(x_t) = \frac{1}{\sqrt{\bar{\alpha}_t}}(x_t + \sqrt{1 - \bar{\alpha}_t}\boldsymbol{s}_\theta(x_t, t)),
\end{equation}
which is an application of Tweedie's formula. Here, $\boldsymbol{s}_\theta$ is usually trained using the epsilon matching score objective. DDIM will be a crucial part of our approach.

\paragraph{Solving Inverse Problems with Diffusion Models.}
This work focuses on the inference of physical parameters $x$ (e.g., conductivity, permeability) from sparse or noisy observations $y$. The inverse problem could be modeled as sampling from the posterior distribution $p(x | y)$. Analogous to the unconditional sampling process by solving the reverse time SDE \eqref{eq:reverse_sde}, we can sample from the posterior distribution $p(x|y)$ by solving the following conditional reverse time SDE~\cite{song2020score}:
\begin{equation}
\label{eq:conditional_sde}
\mathrm{d}x_t = \big[ f(t)x_t - g(t)^2 \nabla_{x_t} \log p_t(x_t | y) \big] \mathrm{d}t + g(t)\mathrm{d}\bar{w}_t, \quad t \in [0,1].
\end{equation}

The core challenge in diffusion-based inverse solvers lies in integrating physical measurements into this generative process. While conditional score models \cite{song2020score} often require extensive paired datasets for supervised learning, unsupervised test-time guidance methods, such as Diffusion Posterior Sampling (DPS) \cite{chung2022diffusion} and RED-diff \cite{mardani2023variational}, approximate conditional inference using unconditionally-trained priors. For complex PDE-constrained problems, the computational burden of iteratively evaluating the true forward physical model during the reverse diffusion process is prohibitive. To mitigate this, recent methods like FunDPS \cite{Yao2025GuidedDS} and DDIS \cite{lin2026decoupled} have shifted toward explicitly decoupling the generative prior from physical constraints, often employing neural operators as fast surrogate solvers. 

Despite these structural advancements, existing decoupled methods still face fundamental limitations that hinder their application to highly nonlinear physical systems. A critical bottleneck in these iterative sampling loops is the reliance on Tweedie's estimate, $\hat{x}_0(x_t)$, to approximate the clean state for physics evaluation. Crucially, while $\hat{x}_0(x_t)$ aims to predict the target, at early and intermediate timesteps it mathematically evaluates to a posterior mean ($\mathbb{E}[x_0|x_t]$)—a blurry, smoothed-out approximation lacking the high-frequency structural details of a true physical state. While mathematically viable for simple linear masking (as in joint-embedding models), using these partially denoised, low-fidelity predictions as inputs for highly nonlinear PDE surrogates is fundamentally flawed. It inevitably forces the surrogate models to process data distributions they were never trained on, yielding unpredictable OOD errors and severe gradient instabilities. To fundamentally resolve these algorithmic mismatches, we introduce the DiLO framework.

\section{Diffusion Latent Optimization (DiLO)}\label{method}
As illustrated in Figure \ref{fig:pipeline_overview}, DiLO decouples generative priors from physical constraints through a two phase framework. \textbf{A. Offline Training} establishes the core models: (i) a Latent Diffusion Model (LDM) trained in a compressed space $\mathcal{Z}$ via an Encoder ($\mathcal{E}$) and Decoder ($\mathcal{D}$) to capture the prior distribution of physical parameters $a \in \mathcal{A}$, and (ii) a fast Differentiable Surrogate $\tilde{\mathcal{F}}_\phi$ pre-trained on paired data to replace traditional numerical PDE solvers. Building upon these components, \textbf{B. Online Inference} reformulates the inverse problem as a strictly deterministic optimization over the initial latent noise $z_T$. By projecting $z_T$ through a variance free denoising trajectory, we obtain a fully denoised latent state $z_0$. Crucially, the surrogate $\tilde{\mathcal{F}}_\phi$ evaluates the physical loss exclusively on the clean, reconstructed parameter field $\hat{a} = \mathcal{D}(z_0)$, eliminating OOD errors. Precise measurement gradients are then backpropagated through $\mathcal{D}$ and the deterministic sampling pathway via Solver-in-the-Loop Backpropagation. This stable optimization directly updates $z_T$, guiding the optimization along the learned physical manifold to recover the unique optimal field $a^*$.

\begin{figure}[ht]
    \centering
    \includegraphics[width=\textwidth]{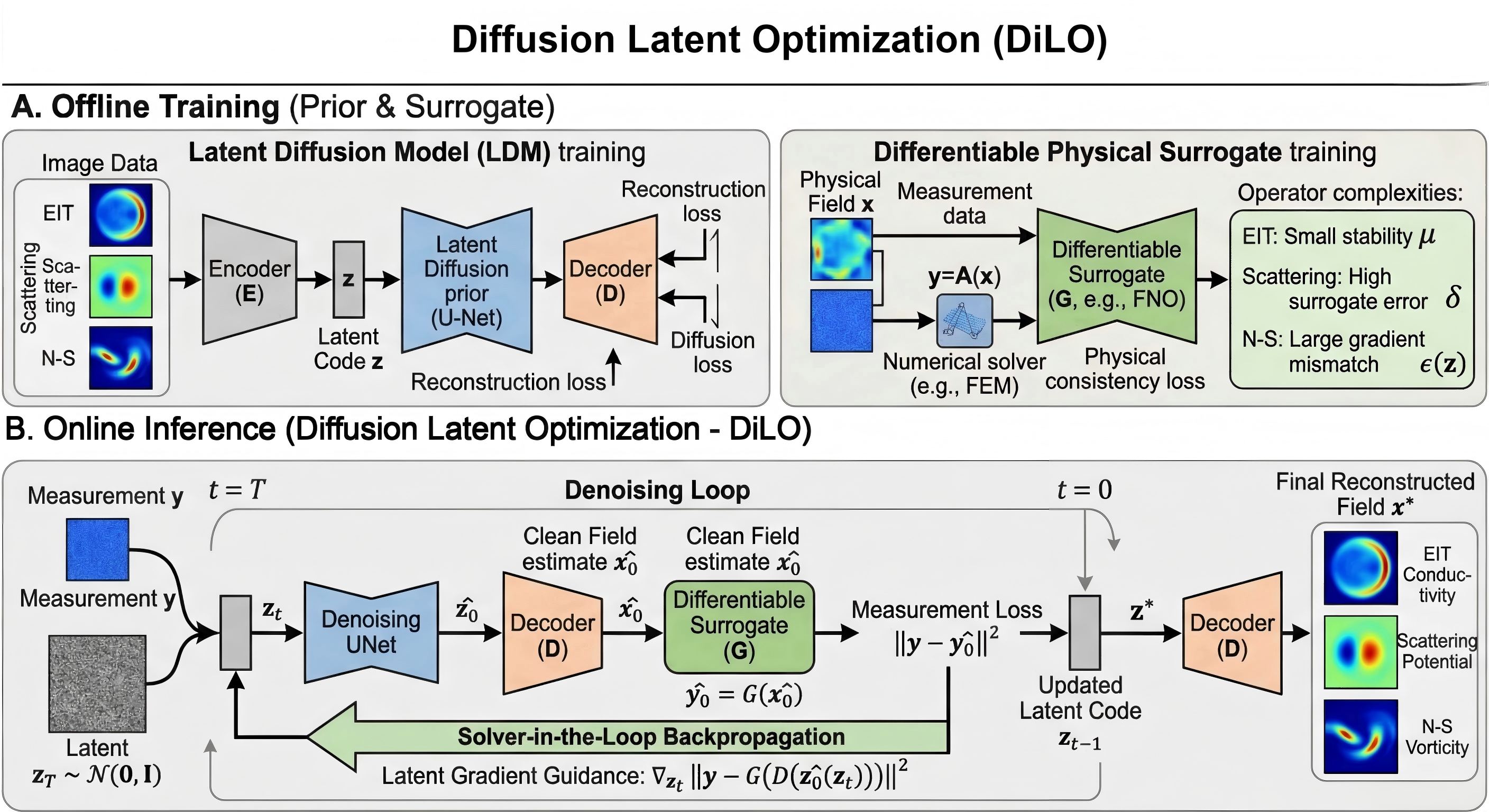} 
    \caption{The DiLO framework. (A) \textbf{Offline Training}: Pre-training the Latent Diffusion Model (LDM) to serve as a plug-and-play prior, alongside a Differentiable Surrogate (e.g., FNO) acting as a forward physics operator. (B) \textbf{Online Inference}: Solving the inverse problem via deterministic latent optimization, where precise gradients from the measurement loss are backpropagated to the initial noise $z_T$.}
    \label{fig:pipeline_overview}
\end{figure}

\subsection{Offline Training}
\paragraph{Efficient Inference via Compressed Diffusion}
Direct optimization in the original high dimensional parameter space $\mathcal{A}$ is often ill-posed and prone to converging to unphysical local minima. To enable efficient and stable inference for high dimensional physical parameters, we perform the diffusion process in a compressed representation space. Operating in a compressed space inherently constrains the search process to a valid, low dimensional physical manifold.

We define the inference trajectory in a lower dimensional space $\mathcal{Z}$. To this end, we first pre-train an autoencoder consisting of an encoder $\mathcal{E}$ and a decoder $\mathcal{D}$ by minimizing the reconstruction error:
\begin{equation}
    \mathcal{L}_{\text{ae}} = \mathbb{E}_{a \sim p_{\text{data}}(a)} \left[ \| a - \mathcal{D}(\mathcal{E}(a)) \|_{\mathcal{A}}^2 \right],
\end{equation}
where $\mathcal{E}: \mathcal{A} \to \mathcal{Z}$ maps the high-dimensional physical parameters to a latent representation $z$, and $\mathcal{D}: \mathcal{Z} \to \mathcal{A}$ performs the inverse reconstruction. 

\paragraph{Latent Diffusion Model.}
Once the compressed latent space is established, we apply the diffusion framework introduced previously to learn the prior distribution of the latent codes $z = \mathcal{E}(a)$. Following the noise-prediction parameterization commonly adopted in DDIMs, we formulate the score model $s_\theta$ as a noise prediction network $\epsilon_\theta(z_t, t)$. The training objective is thus adapted from the standard denoising score matching to the latent epsilon-matching objective:
\begin{equation}
    \mathcal{L}_{\text{diff}} = \mathbb{E}_{z \sim \mathcal{E}(a), \epsilon \sim \mathcal{N}(0, \mathbf{I}), t} \left[ \| \epsilon - \epsilon_\theta(z_t, t) \|^2 \right],
\end{equation}
where $z_t = \sqrt{\bar{\alpha}_t} z + \sqrt{1 - \bar{\alpha}_t} \epsilon$ represents the noisy latent state at timestep $t$. This pre-trained network $\epsilon_\theta(z_t, t)$ now serves as a powerful, low dimensional generative prior for our subsequent optimization.

\paragraph{Neural Operator as Differentiable Forward Physics.}
To bridge the gap between the generative prior and the observed measurements $y \in \mathcal{H}$, we utilize a Neural Operator $\tilde{\mathcal{F}}_\phi$ (e.g., FNO or DeepONet), which provides a fast, differentiable mapping from the parameter space $\mathcal{A}$ to the observation space $\mathcal{H}$. The surrogate operator is pre-trained on paired datasets $(a, y)$ generated from the underlying true PDE forward map $\mathcal{F}$:
\begin{equation}
    \mathcal{L}_{\text{op}} = \mathbb{E}_{(a, y)} \left[ \| y - \tilde{\mathcal{F}}_\phi(a) \|_{\mathcal{H}}^2 \right].
\end{equation}

\paragraph{The Bottleneck of Standard Diffusion Guidance.}
With both models pre-trained, a natural approach to solve the inverse problem is to enforce measurement consistency during the generative process. This is typically achieved by utilizing the differentiable surrogate $\tilde{\mathcal{F}}_\phi$ to guide the reverse diffusion trajectory step-by-step. Specifically, at each noise level $t$, the likelihood gradient is approximated via Tweedie's estimate $\hat{a}_0(a_t)$:
\begin{equation} \label{eq:flawed_guidance}
    \nabla_{a_t} \log p(y | a_t) \approx -\gamma \nabla_{a_t} \| y - \tilde{\mathcal{F}}_\phi(\hat{a}_0(a_t)) \|_{\mathcal{H}}^2,
\end{equation}
where $y$ is the observation and $\gamma$ scales the guidance strength. However, this direct integration exposes a critical theoretical bottleneck. During the early and middle stages of diffusion, the intermediate estimates of the physical state, $\hat{a}_0(a_t)$, are mathematically equivalent to posterior means $\mathbb{E}[a_0|a_t]$. These averaged states inherently exhibit smoothing artifacts and lack the sharp, high-fidelity structures characteristic of true physical fields.

Conversely, physical surrogates like FNO are trained exclusively on $\mathcal{A}_{\text{valid}}$, the manifold of fully-resolved parameter fields governed by precise PDE dynamics. Feeding these blurry, non-physical conditional mean states into a highly nonlinear surrogate creates a fundamental physics mismatch. Due to the intrinsic nonlinearity of the PDE forward map $\mathcal{F}$, evaluating the surrogate on an averaged state is mathematically flawed: by Jensen's inequality, $\mathcal{F}(\mathbb{E}[a_0|a_t]) \neq \mathbb{E}[\mathcal{F}(a_0)|a_t]$. This structural mismatch forces the neural operator into OOD regimes, yielding uninformative loss evaluations and gradient instabilities.

To systematically resolve this bottleneck, we formalize the \textbf{Manifold Consistency Requirement}: \textit{the physical surrogate must be evaluated exclusively on states within the valid physical manifold $\mathcal{A}_{\text{valid}}$.} Recognizing that evaluating the surrogate on intermediate posterior means inherently violates this requirement, we introduce the DiLO framework to fundamentally reformulate the inference trajectory.

\subsection{Diffusion Latent Optimization}
To solve the inverse problem $y_{\text{obs}} \approx \tilde{\mathcal{F}}_\phi(a)$ while strictly satisfying the Manifold Consistency Requirement, we must treat the generative process as an end-to-end differentiable computational graph and directly optimize the initial latent state $z_T$.

\paragraph{Deterministic Sampling Trajectory.}
To make this end-to-end optimization mathematically tractable and numerically stable, DiLO completely eliminates the inherent randomness of standard diffusion sampling. We adopt the deterministic Denoising Diffusion Implicit Model (DDIM) by setting the temperature parameter $\eta = 0$. The reverse trajectory in the latent space $\mathcal{Z}$ thus collapses into a purely deterministic Ordinary Differential Equation (ODE) process:
\begin{equation}\label{deterministic_ode}
    z_{t-1} = \sqrt{\bar{\alpha}_{t-1}}\hat{z}_0(z_t) + \sqrt{1 - \bar{\alpha}_{t-1}} \epsilon_\theta(z_t, t),
\end{equation}
where $\hat{z}_0(z_t)$ is the predicted clean latent state. 

By transforming the stochastic generation into a strictly deterministic trajectory, we establish a stable, bijective-like mapping between the initial noise $z_T$ and the final latent code $z_0$. In this framework, the pre-trained diffusion model provides a powerful structural prior that constrains the optimization landscape to a manifold of physically valid states. The deterministic DDIM trajectory acts as a deterministic differentiable pathway to reach the optimal solution.

\begin{remark}
From a numerical optimization perspective, retaining the stochasticity (i.e., $\eta > 0$) renders direct gradient-based optimization practically infeasible. A stochastic sampling path implies that evaluating the measurement loss $\mathcal{L}(z_T)$ for the exact same initial noise $z_T$ would yield widely varying results across different iterations. The deterministic ODE formulation is thus not merely a design choice, but a fundamental mathematical prerequisite for establishing a fixed, reproducible loss landscape.
\end{remark}

\paragraph{Latent Space Optimization.}
With the deterministic mapping established, the inverse problem is solved by minimizing the discrepancy between the predicted observation and the measured data $y_{\text{obs}} \in \mathcal{H}$. Instead of sequentially modifying intermediate scores, we define the physical measurement loss $\mathcal{L}$ and optimize the initial latent noise $z_T$ directly:
\begin{equation}
    \mathcal{L}(z_T) = \| \tilde{\mathcal{F}}_\phi(\mathcal{D}(z_0(z_T))) - y_{\text{obs}} \|_{\mathcal{H}}^2.
\end{equation}
Utilizing the chain rule, precise gradients flow from the clean, physically valid reconstructed manifold back through the deterministic sampling trajectory to $z_T$:
\begin{equation}
    \frac{\partial \mathcal{L}}{\partial z_T} = \frac{\partial \mathcal{L}}{\partial y} \cdot \frac{\partial \tilde{\mathcal{F}}_\phi}{\partial a} \cdot \frac{\partial \mathcal{D}}{\partial z_0} \cdot \frac{\partial z_0}{\partial z_T},
\end{equation}
where we denote the intermediate physical parameter as $a = \mathcal{D}(z_0)$ and the predicted observation as $y = \tilde{\mathcal{F}}_\phi(a)$.

Equipped with this gradient, we can iteratively update the initial latent state $z_T$ using standard gradient-based optimizers (e.g., Adam or Gradient Descent). We formally summarize this iterative procedure as the Diffusion Latent Optimization (DiLO) framework in Algorithm \ref{alg:DiLO}.

\begin{algorithm}[htbp]
\caption{Diffusion Latent Optimization  (DiLO)}
\label{alg:DiLO}
\begin{algorithmic}[1]
    \State \textbf{Require:} Pre-trained Score Model $s_\theta$, Decoder $\mathcal{D}$, Differentiable Surrogate $\tilde{\mathcal{F}}_\phi$, Observations $y_{\text{obs}}$, Iterations $M$, Learning rate $\eta$
    \State \textbf{Initialize:} Initial latent noise $z_T \sim \mathcal{N}(0, \mathbf{I})$ 
    \For{$i = 1$ to $M$} \Comment{Outer optimization loop}
        \State \textbf{1. Deterministic Trajectory ($\eta = 0$):}
        \For{$t = T$ \textbf{down to} $1$} 
            \State \quad $\hat{z}_0 = \frac{1}{\sqrt{\bar{\alpha}_t}} \left( z_t + \sqrt{1 - \bar{\alpha}_t} s_\theta(z_t, t) \right)$ \Comment{Predict clean latent via Tweedie's formula}
            \State \quad $z_{t-1} = \sqrt{\bar{\alpha}_{t-1}}\hat{z}_0 + \sqrt{1 - \bar{\alpha}_{t-1}} s_\theta(z_t, t)$ \Comment{Deterministic DDIM update step}
        \EndFor
        \State \quad \Comment{Yields final deterministic latent code $z_0$}
        
        \State \textbf{2. Physical Reconstruction:}
        \State \quad $\hat{a} = \mathcal{D}(z_0)$ \Comment{Map latent code to physical parameter}
        
        \State \textbf{3. Forward Physics \& Loss:}
        \State \quad $\hat{y} = \tilde{\mathcal{F}}_\phi(\hat{a})$ \Comment{Fast evaluation via Neural Operator}
        \State \quad $\mathcal{L}_{\text{physics}} = \|\hat{y} - y_{\text{obs}}\|_{\mathcal{H}}^2$
        
        \State \textbf{4. Latent Gradient Guidance:}
        \State \quad $\mathbf{g} = \nabla_{z_T} \mathcal{L}_{\text{physics}}$ \Comment{Solver-in-the-loop backpropagation}
        \State \quad $z_T \gets z_T - \eta \mathbf{g}$ \Comment{Update initial noise}
    \EndFor
    \State \textbf{Output:} Optimized physical parameter $a^* = \mathcal{D}(z_0(z_T^*))$
\end{algorithmic}
\end{algorithm}

\subsection{Convergence Analysis}
\label{sec:convergence}
We ground our analysis in Electrical Impedance Tomography (EIT); the convergence analysis for other physical systems follows a similar methodology. The forward physical model $\mathcal{F}: \sigma \mapsto u|_{\partial \Omega}$ is governed by:
\begin{equation} \label{eq:eit_forward}
\begin{cases}
    \nabla \cdot (\sigma(\mathbf{x}) \nabla u(\mathbf{x})) = 0, \quad &\mathbf{x} \in \Omega \\
    \sigma(\mathbf{x}) \frac{\partial u}{\partial \mathbf{n}} = g. \quad &\mathbf{x} \in \partial \Omega
\end{cases}
\end{equation}
Assume the conductivity is restricted to an admissible set of uniformly elliptic $L^\infty$ functions:
\begin{equation}
    \Sigma_{\text{ad}} = \left\{ \sigma \in L^\infty(\Omega) \;\middle|\; 0 < \lambda \le \sigma(\mathbf{x}) \le \Lambda < \infty \text{ a.e. in } \Omega \right\}.
\end{equation}
DiLO optimizes the initial latent state $z_T \in \mathcal{Z}$ through a deterministic generation path $\sigma = \mathcal{M}(z_T)$. By substituting the Tweedie estimate \eqref{Tweedie_estimat}, the Deterministic Sampling Trajectory update \eqref{deterministic_ode} expands to:
\begin{equation} \label{eq:ddim_expanded}
    z_{t-1} = \underbrace{\frac{\sqrt{\bar{\alpha}_{t-1}}}{\sqrt{\bar{\alpha}_t}}}_{c_t} z_t + \underbrace{\left( \sqrt{1 - \bar{\alpha}_{t-1}} - \frac{\sqrt{\bar{\alpha}_{t-1}}}{\sqrt{\bar{\alpha}_t}} \sqrt{1-\bar{\alpha}_t} \right)}_{d_t} \boldsymbol{\epsilon}_\theta(z_t, t).
\end{equation}
Given that the generative prior is trained exclusively on physically valid parameters, we assume that the deterministic mapping projects strictly into the admissible set: $\mathcal{M}(z_T) \in \Sigma_{\text{ad}}$ for all $z_T \in \mathcal{Z}$, which unconditionally guarantees the uniform ellipticity of the EIT forward equation.

Let the exact physical data misfit and its corresponding composite objective in the latent space be defined respectively as:
\begin{equation} \label{eq:exact_objectives}
    \mathcal{L}_{\text{phys}}(\sigma) = \frac{1}{2} \|\mathcal{F}(\sigma) - V_{\text{obs}}\|_{L^2(\partial \Omega)}^2, \qquad \mathcal{L}_{\text{exact}}(z_T) = \mathcal{L}_{\text{phys}}(\mathcal{M}(z_T)).
\end{equation}
DiLO optimizes a surrogate objective parameterized by a fast neural operator $\tilde{\mathcal{F}}_\phi \approx \mathcal{F}$:
\begin{equation} \label{eq:surrogate_objective}
    \mathcal{L}_{\text{surr}}(z_T) = \frac{1}{2} \|\tilde{\mathcal{F}}_\phi(\mathcal{M}(z_T)) - V_{\text{obs}}\|_{L^2(\partial \Omega)}^2.
\end{equation}

By the continuous adjoint state method\cite{feldman2019calderon}, the first-order Fréchet derivative of the physical misfit is $\nabla_\sigma \mathcal{L}_{\text{phys}}(\sigma) = -\nabla u \cdot \nabla w$, where $w(\mathbf{x})$ is the adjoint potential satisfying:
\begin{equation} \label{eq:adjoint}
\begin{cases}
    \nabla \cdot (\sigma(\mathbf{x}) \nabla w(\mathbf{x})) = 0, \quad &\mathbf{x} \in \Omega \\
    \sigma(\mathbf{x}) \frac{\partial w}{\partial \mathbf{n}} = u - V_{\text{obs}}. \quad &\mathbf{x} \in \partial \Omega
\end{cases}
\end{equation}
Differentiating Eq.~\ref{eq:ddim_expanded}, the single-step generative Jacobian explicitly evaluates to:
\begin{equation} \label{eq:step_jacobian}
    J_t = \frac{\partial z_{t-1}}{\partial z_t} = c_t \mathbf{I} + d_t \nabla_{z_t} \boldsymbol{\epsilon}_\theta(z_t, t).
\end{equation}
By the chain rule, the exact first-order gradient with respect to the initial latent noise $z_T$ is:
\begin{equation} \label{eq:exact_gradient}
    \nabla_{z_T} \mathcal{L}_{\text{exact}}(z_T) = J_{\mathcal{M}}(z_T)^T \big( -\nabla u \cdot \nabla w \big),
\end{equation}
where $J_{\mathcal{M}}(z_T)$ is the full Jacobian of the generative mapping $\mathcal{M}$ with respect to $z_T$:
\begin{equation} \label{eq:full_generative_jacobian}
    J_{\mathcal{M}}(z_T) = \frac{\partial \mathcal{M}(z_T)}{\partial z_T} = J_{\mathcal{D}}(z_0) \prod_{t=1}^T \big( c_t \mathbf{I} + d_t \nabla_{z_t} \boldsymbol{\epsilon}_\theta(z_t, t) \big).
\end{equation}

To establish the $L$-smoothness of $\nabla_{z_T} \mathcal{L}_{\text{exact}}$, we evaluate its second-order derivative. It is explicitly characterized through the second-order adjoint state method.

\begin{lemma}[Hessian via Second-Order Adjoints and Unrolled Dynamics] \label{lemma:hessian_decomp}
    Let $u$ and $w$ be the solutions to the forward equation \eqref{eq:eit_forward} and the first-order adjoint equation \eqref{eq:adjoint}, respectively. For any conductivity perturbation $\hat{\sigma}$ pushed forward from the latent space, we define the tangent state $\hat{u}$ and the second-order adjoint state $\hat{w}$ as the unique solutions to the following boundary value problems:
    \begin{equation} \label{eq:second_order_pdes}
        \begin{cases}
            \nabla \cdot (\sigma \nabla \hat{u}) = -\nabla \cdot (\hat{\sigma} \nabla u) \\
            \sigma \frac{\partial \hat{u}}{\partial \mathbf{n}} = -\hat{\sigma} \frac{\partial u}{\partial \mathbf{n}}
        \end{cases}
        \qquad \text{and} \qquad
        \begin{cases}
            \nabla \cdot (\sigma \nabla \hat{w}) = -\nabla \cdot (\hat{\sigma} \nabla w) \\
            \sigma \frac{\partial \hat{w}}{\partial \mathbf{n}} = \hat{u} - \hat{\sigma} \frac{\partial w}{\partial \mathbf{n}}
        \end{cases}
    \end{equation}
    Using these solutions, the Hessian matrix $\mathbf{H}_{z_T} = \nabla_{z_T}^2 \mathcal{L}_{\text{exact}}(z_T) \in \mathbb{R}^{\dim(\mathcal{Z}) \times \dim(\mathcal{Z})}$ admits:
    \begin{equation} \label{eq:exact_hessian}
        \mathbf{H}_{z_T} = J_{\mathcal{M}}(z_T)^T \Big[ \mathbf{H}_{\text{phys}} \Big] J_{\mathcal{M}}(z_T) + \int_{\Omega} \big( -\nabla u \cdot \nabla w \big) \nabla_{z_T}^2 \mathcal{M}(\mathbf{x}; z_T) \, d\mathbf{x},
    \end{equation}
    where $J_{\mathcal{M}}(z_T)^T$ acts as the adjoint operator mapping from the dual space $L^1(\Omega)$ back to the finite-dimensional latent space $\mathcal{Z}$. The action of the continuous physical Hessian operator $\mathbf{H}_{\text{phys}}$ on the perturbation $\hat{\sigma}$ is given by:
    \begin{equation} \label{eq:physical_hessian_action}
        \mathbf{H}_{\text{phys}} \hat{\sigma} = -\nabla \hat{u} \cdot \nabla w - \nabla u \cdot \nabla \hat{w},
    \end{equation}
    and the generative curvature tensor $\nabla_{z_T}^2 \mathcal{M}_i(z_T)$ is explicitly unrolled as a sum of step-wise diffusion Hessians pulled back via the accumulated Jacobians $J_s = c_s \mathbf{I} + d_s \nabla_{z_s} \boldsymbol{\epsilon}_\theta(z_s, s)$:
    \begin{equation} \label{eq:generative_curvature_unrolled}
    \begin{aligned}
        \nabla_{z_T}^2 \mathcal{M}_i(z_T) &= \left( \prod_{s=1}^T J_s \right)^T \Big[ \nabla_{z_0}^2 \mathcal{D}_i(z_0) \Big] \left( \prod_{s=1}^T J_s \right) \\
        &\quad + \sum_{t=1}^T \left( \prod_{s=t+1}^T J_s \right)^T \Bigg[ \sum_k \left( J_{\mathcal{D}}(z_0) \prod_{m=1}^{t-1} J_m \right)_{i,k} d_t \nabla_{z_t}^2 \boldsymbol{\epsilon}_\theta^{(k)}(z_t, t) \Bigg] \left( \prod_{s=t+1}^T J_s \right).
    \end{aligned}
    \end{equation}
\end{lemma}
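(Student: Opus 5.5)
The proof splits into three parts: a second-order chain rule for the composition $\mathcal{L}_{\text{exact}} = \mathcal{L}_{\text{phys}}\circ\mathcal{M}$, a second-order adjoint computation for $\mathcal{L}_{\text{phys}}$, and an unrolling of the curvature of $\mathcal{M}$ through the DDIM recursion \eqref{eq:ddim_expanded}. Throughout, $\mathcal{M}$ is assumed $C^2$ (smooth $\boldsymbol{\epsilon}_\theta$ and $\mathcal{D}$) with values in $\Sigma_{\text{ad}}$, and $\mathcal{L}_{\text{phys}}$ is assumed twice Fréchet differentiable on $\Sigma_{\text{ad}}$. The latter holds because, under uniform ellipticity, the solution map $\sigma\mapsto u$ is analytic on the open set $\{\lambda\le\sigma\le\Lambda\}$ of $L^\infty$.

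\textbf{Chain rule.} Take directions $\xi,\zeta\in\mathcal{Z}$ and differentiate $\langle \nabla_{z_T}\mathcal{L}_{\text{exact}},\xi\rangle = D\mathcal{L}_{\text{phys}}(\mathcal{M})[J_{\mathcal{M}}\xi]$ once more in direction $\zeta$. This gives
\[
D^2\mathcal{L}_{\text{phys}}[J_{\mathcal{M}}\xi,J_{\mathcal{M}}\zeta] + D\mathcal{L}_{\text{phys}}[D^2\mathcal{M}[\xi,\zeta]].
\]
The derivative $D\mathcal{L}_{\text{phys}}$ is represented by the $L^1$ density $-\nabla u\cdot\nabla w$ from \eqref{eq:exact_gradient}. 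Substituting it into the second term yields the integral term in \eqref{eq:exact_hessian}, and the first term is exactly $J_{\mathcal{M}}^T\mathbf{H}_{\text{phys}}J_{\mathcal{M}}$.

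\textbf{Physical Hessian.} Set $\hat\sigma=J_{\mathcal{M}}\zeta$. Differentiating the forward problem \eqref{eq:eit_forward} in direction $\hat\sigma$ gives the tangent problem for $\hat u$ in \eqref{eq:second_order_pdes}, with interior equation $\nabla\cdot(\sigma\nabla\hat u)+\nabla\cdot(\hat\sigma\nabla u)=0$ and the corresponding differentiated Neumann data. Differentiating the adjoint problem \eqref{eq:adjoint}, whose boundary data $u-V_{\text{obs}}$ varies by $\hat u$, gives the problem for $\hat w$. Next, differentiate the gradient density $-\nabla u\cdot\nabla w$ by the product rule to obtain $-\nabla\hat u\cdot\nabla w-\nabla u\cdot\nabla\hat w$, which is \eqref{eq:physical_hessian_action}. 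Well-posedness of both problems follows from Lax--Milgram on $H^1(\Omega)/\mathbb{R}$: uniform ellipticity supplies coercivity, and $\hat\sigma\in L^\infty$ together with $\nabla u\in L^2$ makes the right-hand sides bounded functionals. The one nontrivial check is the Neumann compatibility condition, which follows from the divergence theorem applied to $\hat\sigma\nabla u$. The sign conventions must be kept consistent with the first-order formula taken from \cite{feldman2019calderon}.

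\textbf{Unrolling.} This step is the main obstacle, since it is index bookkeeping. Write $\mathcal{M}=\mathcal{D}\circ\Phi_1\circ\cdots\circ\Phi_T$ with $\Phi_t(z)=c_tz+d_t\boldsymbol{\epsilon}_\theta(z,t)$. Then $D\Phi_t=J_t$ and $D^2\Phi_t=d_t\nabla^2\boldsymbol{\epsilon}_\theta(\cdot,t)$, because the linear part contributes no curvature. Apply the second-order chain rule for the composition $G\circ H$, namely $\nabla^2(G_i\circ H)=DH^T\,\nabla^2G_i\,DH+\sum_k\partial_kG_i\,\nabla^2H_k$, inductively along the chain. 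For each $t$, the curvature generated at step $t$ is pulled back to $z_T$ by the Jacobians of the steps $T,\dots,t+1$, giving the factor $\prod_{s=t+1}^T J_s$ on both sides. It is also contracted on the output side with the downstream first derivative $J_{\mathcal{D}}\prod_{m=1}^{t-1}J_m$, which is the $(i,k)$ coefficient in \eqref{eq:generative_curvature_unrolled}. The decoder's own curvature appears with the full product on both sides. I would organize this as an induction on $T$: peel off the outermost step $\Phi_T$ and apply the two-map formula with $G=\mathcal{D}\circ\Phi_1\circ\cdots\circ\Phi_{T-1}$ and $H=\Phi_T$. The care needed is to keep the product ordering consistent, with $\prod_{s=t+1}^T J_s$ meaning $J_{t+1}\cdots J_T$ acting on $z_T$ first, and to check that each step $t$ is counted exactly once.

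Finally, integrate the $i$-indexed curvature against $-\nabla u\cdot\nabla w$ over $\Omega$, with $i$ playing the role of $\mathbf{x}$, to recover the integral term and complete \eqref{eq:exact_hessian}.
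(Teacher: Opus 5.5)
Your proposal is correct and follows essentially the same route as the paper's appendix proof. Both use the second-order chain rule for $\mathcal{L}_{\text{phys}}\circ\mathcal{M}$, obtain the tangent and second-order adjoint problems by differentiating the forward and adjoint equations (with $\mathbf{H}_{\text{phys}}\hat\sigma$ from the product rule on $-\nabla u\cdot\nabla w$), and unroll the DDIM curvature; your induction on $T$ with the ordering $J_{t+1}\cdots J_T$ is a more explicit version of the paper's product-rule differentiation of $\bigl(\prod_{s} J_s\bigr)^T\nabla_{z_0}\mathcal{D}_i(z_0)$.

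Two small fixes are needed. First, $\{\lambda\le\sigma\le\Lambda\}$ is closed, not open, in $L^\infty(\Omega)$, so invoke analyticity on the open set $\{\operatorname{ess\,inf}\sigma>0\}\supset\Sigma_{\text{ad}}$. Second, the compatibility condition for the $\hat w$ problem is $\int_{\partial\Omega}\hat u\,ds=0$; this does not follow from the divergence theorem but from the boundary normalization of $u$ that already makes the first-order adjoint problem solvable.
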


\begin{proof}
    The decomposition follows from applying the generalized tensor chain rule to $\mathcal{L}_{\text{phys}} \circ \mathcal{M}$. The explicit bilinear formulation of the physical Hessian $\mathbf{H}_{\text{phys}}$ is derived by taking the directional Fréchet derivative of the first-order gradient, yielding the tangent and second-order adjoint PDEs. Concurrently, the unrolled generative curvature is derived by applying the multivariate chain rule for second derivatives to the discrete DDIM trajectory. The full step-by-step derivation is detailed in Appendix~\ref{app:hessian_derivation}.
\end{proof}

To rigorously analyze convergence, we constrain the neural network components using the following boundedness and approximation assumptions.

\begin{assumption}[Neural Network and Surrogate Bounds] \label{assump:bounds} \mbox{}
\begin{itemize}
    \item The continuous time-conditioned neural network $\boldsymbol{\epsilon}_\theta(z_t, t)$ possesses uniformly bounded first and second spatial derivatives: $\|\nabla_{z_t} \boldsymbol{\epsilon}_\theta\| \le L_{\epsilon, 1}$ and $\|\nabla_{z_t}^2 \boldsymbol{\epsilon}_\theta\| \le L_{\epsilon, 2}$.
    \item The spatial decoder $\mathcal{D}(z_0)$ is twice continuously differentiable with bounded Jacobians and Hessians: $\|J_{\mathcal{D}}(z_0)\| \le L_{\mathcal{D}, 1}$ and $\|\nabla_{z_0}^2 \mathcal{D}(z_0)\| \le L_{\mathcal{D}, 2}$.
    \item The well-trained neural surrogate closely tracks the exact adjoint-derived gradient on the restricted manifold $\mathcal{M}(\mathcal{Z})$, such that $\|\nabla_{z_T} \mathcal{L}_{\text{surr}} - \nabla_{z_T} \mathcal{L}_{\text{exact}}\|_{\mathcal{Z}} \le \delta$.
\end{itemize}
\end{assumption}

\begin{lemma}[Hessian-Bounded $L$-Smoothness] \label{lemma:smoothness}
    Under the domain restriction $\mathcal{M}(z_T) \in \Sigma_{\text{ad}}$, the exact objective $\mathcal{L}_{\text{exact}}(z_T)$ is globally $L$-smooth (i.e., its Hessian is uniformly bounded: $\|\mathbf{H}_{z_T}\| \le L$). Guided by Assumption~\ref{assump:bounds}, the surrogate objective $\mathcal{L}_{\text{surr}}(z_T)$ inherits this global $L$-smoothness in $\mathcal{Z}$.
\end{lemma}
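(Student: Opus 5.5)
We bound each of the two terms in the Hessian decomposition of Lemma~\ref{lemma:hessian_decomp} uniformly in $z_T$, using the domain restriction $\mathcal{M}(z_T)\in\Sigma_{\text{ad}}$ for the PDE factors and Assumption~\ref{assump:bounds} for the generative factors. Then $L$ is the sum of the two bounds.

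\textbf{Generative factors.} By Assumption~\ref{assump:bounds}, each step Jacobian satisfies $\|J_s\|\le |c_s|+|d_s|L_{\epsilon,1}=:\kappa_s$. Since $T$ is finite and the schedule coefficients $c_t,d_t$ are fixed, $\|J_{\mathcal{M}}(z_T)\|\le L_{\mathcal{D},1}\prod_{s=1}^T\kappa_s=:K_1$. Inserting the same bounds into \eqref{eq:generative_curvature_unrolled} gives
\[
\|\nabla^2_{z_T}\mathcal{M}\|\le \Big(\prod_s\kappa_s\Big)^2 L_{\mathcal{D},2}+\sum_{t=1}^T\Big(\prod_{s>t}\kappa_s\Big)^2 L_{\mathcal{D},1}\Big(\prod_{m<t}\kappa_m\Big)|d_t|L_{\epsilon,2}=:K_2 .
\]
We read the norms here as operator norms into the parameter space, with the decoder landing in $L^\infty(\Omega)$. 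Because $\Omega$ is bounded, this also controls the maps into $L^2(\Omega)$ and $L^1(\Omega)$.

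\textbf{PDE factors.} This is the main obstacle. We need uniform bounds on $\|\nabla u\|$, $\|\nabla w\|$, $\|\nabla\hat u\|$, $\|\nabla\hat w\|$ in terms of $\lambda,\Lambda$, $g$ and $V_{\text{obs}}$ only. Lax--Milgram on $H^1(\Omega)/\mathbb{R}$ with ellipticity $\lambda$ gives
\[
\|\nabla u\|_{L^2}\le C\lambda^{-1}\|g\|_{H^{-1/2}} .
\]
The trace theorem bounds $\|u-V_{\text{obs}}\|_{L^2(\partial\Omega)}$, so $\|\nabla w\|_{L^2}\le C\lambda^{-1}\big(\|u\|_{L^2(\partial\Omega)}+\|V_{\text{obs}}\|\big)$. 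For the tangent and second-order adjoint states we use the weak forms of \eqref{eq:second_order_pdes}, in which the boundary data combine with the divergence terms into $\int\hat\sigma\nabla u\cdot\nabla\phi$. This gives
\[
\|\nabla\hat u\|_{L^2}\le \lambda^{-1}\|\hat\sigma\|_{L^\infty}\|\nabla u\|_{L^2},
\]
and similarly for $\hat w$, with an extra trace term from $\hat u$.

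The difficulty is that the pairing $-\nabla u\cdot\nabla w$ lies only in $L^1(\Omega)$. We therefore need $\hat\sigma$ controlled in $L^\infty$. We get this by taking $\hat\sigma=J_{\mathcal{M}}\xi$ and assuming the decoder bounds hold in the $L^\infty$ operator norm. Then
\[
|\langle \mathbf{H}_{\text{phys}}\hat\sigma,\hat\sigma'\rangle|\le C_{\text{phys}}\|\hat\sigma\|_{L^\infty}\|\hat\sigma'\|_{L^\infty},
\]
with $C_{\text{phys}}$ depending only on $\lambda$, $\Omega$, $g$, $V_{\text{obs}}$. Likewise $\|\nabla u\cdot\nabla w\|_{L^1}\le C_{\text{adj}}$ by Cauchy--Schwarz. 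If the $L^\infty$ control is too strong to assume, we would fall back on Meyers' $W^{1,p}$ estimate ($p>2$) to place the product in $L^{p/2}$, and pair it with decoder outputs in the dual space.

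\textbf{Assembly and surrogate.} Combining the bounds gives $\|\mathbf{H}_{z_T}\|\le C_{\text{phys}}K_1^2+C_{\text{adj}}K_2=:L$, independent of $z_T$. For $\mathcal{L}_{\text{surr}}$, the gradient-tracking bound $\delta$ does not control Hessians, so we do not rely on it. Instead we apply the same Gauss--Newton-plus-curvature decomposition to $\tilde{\mathcal{F}}_\phi\circ\mathcal{M}$. The bounds on $J_{\tilde{\mathcal{F}}_\phi}$, $\nabla^2\tilde{\mathcal{F}}_\phi$ and the residual on the bounded image $\mathcal{M}(\mathcal{Z})$ play the role of $C_{\text{phys}},C_{\text{adj}}$. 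These bounds are implicit in the surrogate being a well-trained smooth network on this manifold. This yields a smoothness constant $\tilde L$, and we take $L$ to be the maximum of $L$ and $\tilde L$.
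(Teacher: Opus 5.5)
For the exact objective you follow the paper's proof essentially step for step. You use the same split of $\mathbf{H}_{z_T}$ into $J_{\mathcal{M}}^T\mathbf{H}_{\text{phys}}J_{\mathcal{M}}$ plus the generative-curvature term, products of step-Jacobian bounds, energy estimates for $\hat u$ and $\hat w$ obtained by testing the weak forms, and the final constant $L=C_J^2C_{\text{phys}}+C_GC_H$. Two of your details improve on the paper:
\begin{itemize}
\item You make explicit that $J_{\mathcal{M}}$ and $\nabla^2_{z_T}\mathcal{M}$ must be bounded as maps into $L^\infty(\Omega)$. The paper uses this silently when it combines $\|\mathbf{H}_{\text{phys}}\hat\sigma\|_{L^1}\le C_{\text{phys}}\|\hat\sigma\|_{L^\infty}$ with $C_J$.
\item Your absolute values in $|c_s|+|d_s|L_{\epsilon,1}$ are actually needed. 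Since $d_t=\sqrt{\bar\alpha_{t-1}}\big(\sqrt{(1-\bar\alpha_{t-1})/\bar\alpha_{t-1}}-\sqrt{(1-\bar\alpha_t)/\bar\alpha_t}\big)<0$, the paper's $\rho_s=c_s+d_sL_{\epsilon,1}$ is not a valid upper bound as written.
\end{itemize}

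The genuine difference is the surrogate clause. The paper gives no argument for it: its proof stops at $\mathcal{L}_{\text{exact}}$, and inheritance is simply asserted from Assumption~\ref{assump:bounds}.

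You are right that the $\delta$-gradient bound cannot deliver it. It only gives $\|\nabla\mathcal{L}_{\text{surr}}(z)-\nabla\mathcal{L}_{\text{surr}}(z')\|\le L\|z-z'\|+2\delta$. For instance, adding $\delta\epsilon\sin(z_1/\epsilon)$ to $\mathcal{L}_{\text{exact}}$ keeps the gradients within $\delta$, while the Hessian grows like $\delta/\epsilon$.

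Your repair is the right one: rerun the decomposition for $\tilde{\mathcal{F}}_\phi\circ\mathcal{M}$ with bounds on $J_{\tilde{\mathcal{F}}_\phi}$, $\nabla^2\tilde{\mathcal{F}}_\phi$ and the residual over the bounded set $\mathcal{M}(\mathcal{Z})$. However, these bounds are not contained in Assumption~\ref{assump:bounds}, and neither is the $C^2$ regularity of $\tilde{\mathcal{F}}_\phi$ (e.g.\ smooth activations). They are also not implied by the surrogate being well trained. State them as an explicit additional hypothesis rather than calling them implicit. With that hypothesis your argument proves the lemma, and $\max(L,\tilde L)$ is the constant that Theorem~\ref{thm:convergence} should use.
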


\begin{proof}
    To establish global $L$-smoothness, we bound the spectral norm of the composite Hessian $\mathbf{H}_{z_T}$ derived in Eq.~\ref{eq:exact_hessian}. Applying the triangle inequality to the unrolled decomposition, we independently bound the generative and physical components.

    \vspace{0.5em}
    \noindent \textit{Step 1: Generative Boundedness.} \\
    By Assumption~\ref{assump:bounds}, the norm of the step-wise diffusion Jacobian $J_s = c_s \mathbf{I} + d_s \nabla_{z_s} \boldsymbol{\epsilon}_\theta$ is uniformly bounded by $\|J_s\| \le c_s + d_s L_{\epsilon, 1} \equiv \rho_s$. Consequently, any accumulated backward or forward Jacobian is strictly bounded by the product of these constants, e.g., $\|\prod_{s=t+1}^T J_s\| \le \prod_{s=t+1}^T \rho_s$. 
    
    Applying the triangle and submultiplicative inequalities to the explicitly unrolled generative curvature (Eq.~\ref{eq:generative_curvature_unrolled}), we obtain:
    \begin{equation} \label{eq:generative_Hessian_bound}
    \begin{aligned}
        \|\nabla_{z_T}^2 \mathcal{M}(z_T)\| &\le \left( \prod_{s=1}^T \rho_s \right)^2 \|\nabla_{z_0}^2 \mathcal{D}\| + \sum_{t=1}^T \left( \prod_{s=t+1}^T \rho_s \right)^2 \|J_{\mathcal{D}}\| \left( \prod_{m=1}^{t-1} \rho_m \right) d_t \|\nabla_{z_t}^2 \boldsymbol{\epsilon}_\theta\| \\
        &\le L_{\mathcal{D}, 2} \left( \prod_{s=1}^T \rho_s \right)^2 + L_{\mathcal{D}, 1} L_{\epsilon, 2} \sum_{t=1}^T d_t \left( \prod_{s=t+1}^T \rho_s \right)^2 \left( \prod_{m=1}^{t-1} \rho_m \right) \equiv C_H < \infty.
    \end{aligned}
    \end{equation}
    Similarly, the full generative Jacobian is bounded by $\|J_{\mathcal{M}}(z_T)\| \le L_{\mathcal{D}, 1} \prod_{t=1}^T \rho_t \equiv C_J < \infty$. Since $T$ is finite, $C_H$ and $C_J$ are well-defined finite constants.

    \vspace{0.5em}
    \noindent \textit{Step 2: Physical Regularity and Boundedness.} \\
    The generative prior unconditionally restricts the conductivity to the admissible set $\Sigma_{\text{ad}}$, ensuring uniform ellipticity $\sigma \ge \lambda > 0$. By standard elliptic regularity and the Lax-Milgram theorem, the weak solutions to the forward equation \eqref{eq:eit_forward} and the adjoint equation \eqref{eq:adjoint} are bounded in $H^1(\Omega)$ norms: $\|u\|_{H^1} \le C_u$ and $\|w\|_{H^1} \le C_w$. 
    
    To bound the physical Hessian action (Eq.~\ref{eq:physical_hessian_action}), we evaluate the dependence of the tangent state $\hat{u}$ and the second-order adjoint state $\hat{w}$ on the latent perturbation $\hat{\sigma}$. The tangent linear equation is governed by $\nabla \cdot (\sigma \nabla \hat{u}) = -\nabla \cdot (\hat{\sigma} \nabla u)$. Testing this weak form with $v = \hat{u}$ and applying Poincaré's and Hölder's inequalities yields the strict energetic bound:
    \begin{equation}
        \lambda \|\nabla \hat{u}\|_{L^2(\Omega)}^2 \le \int_\Omega \sigma |\nabla \hat{u}|^2 = -\int_\Omega \hat{\sigma} \nabla u \cdot \nabla \hat{u} \le \|\hat{\sigma}\|_{L^\infty(\Omega)} \|\nabla u\|_{L^2(\Omega)} \|\nabla \hat{u}\|_{L^2(\Omega)}.
    \end{equation}
    Dividing by $\|\nabla \hat{u}\|_{L^2}$ confirms that $\|\nabla \hat{u}\|_{L^2} \le \frac{1}{\lambda} \|\nabla u\|_{L^2} \|\hat{\sigma}\|_{L^\infty} \le C_{\hat{u}} \|\hat{\sigma}\|_{L^\infty}$. 
    
    A parallel elliptic regularity argument, augmented by trace theorems for the state-dependent boundary condition $\sigma \partial_{\mathbf{n}} \hat{w} = \hat{u} - \hat{\sigma} \partial_{\mathbf{n}} w$, establishes that the second-order adjoint gradient is similarly bounded: $\|\nabla \hat{w}\|_{L^2} \le C_{\hat{w}} \|\hat{\sigma}\|_{L^\infty}$.

    Consequently, applying the Cauchy-Schwarz inequality to the explicit physical Hessian bilinear form (Eq.~\ref{eq:physical_hessian_action}) yields:
    \begin{equation} \label{eq:Physical_boundedness}
    \begin{aligned}
        \|\mathbf{H}_{\text{phys}} \hat{\sigma}\|_{L^1(\Omega)} &= \|-\nabla \hat{u} \cdot \nabla w - \nabla u \cdot \nabla \hat{w}\|_{L^1(\Omega)} \\
        &\le \|\nabla \hat{u}\|_{L^2(\Omega)} \|\nabla w\|_{L^2(\Omega)} + \|\nabla u\|_{L^2(\Omega)} \|\nabla \hat{w}\|_{L^2(\Omega)} \\
        &\le \big( C_{\hat{u}} C_w + C_u C_{\hat{w}} \big) \|\hat{\sigma}\|_{L^\infty(\Omega)} \equiv C_{\text{phys}} \|\hat{\sigma}\|_{L^\infty(\Omega)}.
    \end{aligned}
    \end{equation}
    This proves that the physical Hessian operator $\mathbf{H}_{\text{phys}}$ is uniformly bounded by a constant $C_{\text{phys}}$ on the admissible manifold. Furthermore, the first-order gradient satisfies $\|\nabla_\sigma \mathcal{L}_{\text{phys}}(\sigma)\|_{L^1} \le C_u C_w \equiv C_G < \infty$.

    \vspace{0.5em}
    \noindent \textit{Step 3: Synthesis.} \\
    Substituting the explicit bounds derived in Step 1 and Step 2 back into the composite Hessian decomposition (Eq.~\ref{eq:exact_hessian}) yields the global strict upper bound:
    \begin{equation}
        \|\mathbf{H}_{z_T}\| \le C_J^2 C_{\text{phys}} + C_G C_H \equiv L < \infty.
    \end{equation}
    Because the spectral norm of the explicit composite Hessian is globally bounded by $L$, the exact objective $\mathcal{L}_{\text{exact}}(z_T)$ is strictly $L$-smooth.
\end{proof}

Based on the $L$-smoothness established above, we now prove that the surrogate trajectory reliably converges to a stationary point within this well-behaved subspace.

\begin{theorem}[Convergence to a Stationary Point] \label{thm:convergence}
Let $\{z_{T}^{(k)}\}_{k=0}^K$ be the trajectory generated by gradient descent on the surrogate loss: $z_{T}^{(k+1)} = z_{T}^{(k)} - \eta_{lr} \nabla_{z_T} \mathcal{L}_{\text{surr}}(z_{T}^{(k)})$ with a learning rate $\eta_{lr} \le \frac{1}{L}$. Under Assumption \ref{assump:bounds} and Lemma \ref{lemma:smoothness}, the gradient of the surrogate loss asymptotically converges to zero:
\begin{equation}
    \lim_{K \to \infty} \|\nabla_{z_T} \mathcal{L}_{\text{surr}}(z_{T}^{(K)})\|_{\mathcal{Z}} = 0.
\end{equation}
Furthermore, as $K \to \infty$, the trajectory asymptotically reaches a $\delta$-approximate stationary point of the exact physical objective:
\begin{equation}
    \limsup_{K \to \infty} \|\nabla_{z_T} \mathcal{L}_{\text{exact}}(z_{T}^{(K)})\|_{\mathcal{Z}} \le \delta.
\end{equation}
\end{theorem}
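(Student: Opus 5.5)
The argument is the standard descent-lemma analysis for gradient descent on an $L$-smooth function that is bounded below, followed by a triangle-inequality transfer using the last bullet of Assumption~\ref{assump:bounds}.

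\textbf{Sufficient decrease.} Lemma~\ref{lemma:smoothness} gives that $\nabla_{z_T}\mathcal{L}_{\text{surr}}$ is $L$-Lipschitz on $\mathcal{Z}$. The descent lemma then yields
\[
\mathcal{L}_{\text{surr}}(z_T^{(k+1)}) \le \mathcal{L}_{\text{surr}}(z_T^{(k)}) - \eta_{lr}\Big(1-\tfrac{L\eta_{lr}}{2}\Big)\|\nabla_{z_T}\mathcal{L}_{\text{surr}}(z_T^{(k)})\|_{\mathcal{Z}}^2 .
\]
Since $\eta_{lr}\le 1/L$, the coefficient is at least $\eta_{lr}/2>0$, so the surrogate loss is monotonically non-increasing along the trajectory.

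\textbf{Telescoping.} The surrogate loss is a squared norm, so $\mathcal{L}_{\text{surr}}\ge 0$. Summing the decrease inequality from $k=0$ to $K-1$ gives
\[
\frac{\eta_{lr}}{2}\sum_{k=0}^{K-1}\|\nabla_{z_T}\mathcal{L}_{\text{surr}}(z_T^{(k)})\|_{\mathcal{Z}}^2 \le \mathcal{L}_{\text{surr}}(z_T^{(0)}) - \mathcal{L}_{\text{surr}}(z_T^{(K)}) \le \mathcal{L}_{\text{surr}}(z_T^{(0)}) < \infty .
\]
The bound is uniform in $K$, so the series of squared gradient norms converges. Hence its terms tend to zero, which gives $\lim_{K\to\infty}\|\nabla_{z_T}\mathcal{L}_{\text{surr}}(z_T^{(K)})\|_{\mathcal{Z}}=0$. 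As a by-product, $\min_{k<K}\|\nabla\mathcal{L}_{\text{surr}}\|^2\le 2\mathcal{L}_{\text{surr}}(z_T^{(0)})/(\eta_{lr}K)$, an $O(1/K)$ rate.

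\textbf{Transfer to the exact objective.} For every iterate, the triangle inequality and the third bullet of Assumption~\ref{assump:bounds} give
\[
\|\nabla\mathcal{L}_{\text{exact}}(z_T^{(K)})\|_{\mathcal{Z}} \le \|\nabla\mathcal{L}_{\text{surr}}(z_T^{(K)})\|_{\mathcal{Z}} + \delta .
\]
This applies because each iterate $z_T^{(K)}\in\mathcal{Z}$ and $\mathcal{M}(z_T^{(K)})$ lies on the restricted manifold $\mathcal{M}(\mathcal{Z})$. Taking $\limsup$ and using the previous step yields the $\delta$-approximate stationarity bound.

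\textbf{Main obstacle.} The delicate point is the smoothness hypothesis for the surrogate. Lemma~\ref{lemma:smoothness} states that $\mathcal{L}_{\text{surr}}$ inherits global $L$-smoothness, but the gradient-closeness bound in Assumption~\ref{assump:bounds} controls only gradient values, not their Lipschitz constant. I will therefore invoke the lemma's conclusion directly, as the statement permits, with $L$ read as the Lipschitz constant of $\nabla\mathcal{L}_{\text{surr}}$; this is the constant that enters the step-size condition $\eta_{lr}\le 1/L$. If a self-contained justification were needed, one would require the surrogate $\tilde{\mathcal{F}}_\phi$ to have bounded first and second derivatives on $\Sigma_{\text{ad}}$. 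The Step~1 generative bounds $C_J$ and $C_H$ from the proof of Lemma~\ref{lemma:smoothness} then give the analogous Hessian bound for $\mathcal{L}_{\text{surr}}$. Everything else in the argument is routine.
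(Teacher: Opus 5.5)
Your proposal is correct and follows essentially the same route as the paper: the descent lemma, telescoping against a lower bound on $\mathcal{L}_{\text{surr}}$, the vanishing terms of a convergent series, and the triangle inequality with the $\delta$ bound from Assumption~\ref{assump:bounds}. It is slightly tighter than the paper's write-up, which specializes to $\eta_{lr}=1/L$ and evaluates at a limit point $z_T^{(\infty)}$ whose existence it never establishes, whereas you handle every $\eta_{lr}\le 1/L$ and pass correctly to a $\limsup$.
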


\begin{proof}
By the explicit Hessian-bounded $L$-smoothness of the surrogate objective established in Lemma \ref{lemma:smoothness}, the standard descent inequality holds for any iteration $k$:
\begin{equation}
    \mathcal{L}_{\text{surr}}(z_{T}^{(k+1)}) \le \mathcal{L}_{\text{surr}}(z_{T}^{(k)}) - \eta_{lr} \|\nabla_{z_T} \mathcal{L}_{\text{surr}}(z_{T}^{(k)})\|_{\mathcal{Z}}^2 + \frac{L \eta_{lr}^2}{2} \|\nabla_{z_T} \mathcal{L}_{\text{surr}}(z_{T}^{(k)})\|_{\mathcal{Z}}^2.
\end{equation}
Setting the optimal learning rate $\eta_{lr} = \frac{1}{L}$, this simplifies to a strict functional decrease:
\begin{equation}
    \mathcal{L}_{\text{surr}}(z_{T}^{(k+1)}) \le \mathcal{L}_{\text{surr}}(z_{T}^{(k)}) - \frac{1}{2L} \|\nabla_{z_T} \mathcal{L}_{\text{surr}}(z_{T}^{(k)})\|_{\mathcal{Z}}^2.
\end{equation}
Rearranging the terms yields a bound on the squared gradient norm at step $k$:
\begin{equation}
    \frac{1}{2L} \|\nabla_{z_T} \mathcal{L}_{\text{surr}}(z_{T}^{(k)})\|_{\mathcal{Z}}^2 \le \mathcal{L}_{\text{surr}}(z_{T}^{(k)}) - \mathcal{L}_{\text{surr}}(z_{T}^{(k+1)}).
\end{equation}
Summing this inequality over $K$ iterations from $k=0$ to $K-1$, the intermediate terms telescope:
\begin{equation}
    \frac{1}{2L} \sum_{k=0}^{K-1} \|\nabla_{z_T} \mathcal{L}_{\text{surr}}(z_{T}^{(k)})\|_{\mathcal{Z}}^2 \le \mathcal{L}_{\text{surr}}(z_{T}^{(0)}) - \mathcal{L}_{\text{surr}}(z_{T}^{(K)}).
\end{equation}
Since the surrogate loss is bounded below by its global minimum $\mathcal{L}_{\text{surr}}^*$, the right side is strictly bounded by the initial optimality gap $\mathcal{L}_{\text{surr}}(z_{T}^{(0)}) - \mathcal{L}_{\text{surr}}^*$. Taking the limit as $K \to \infty$, the infinite series of squared gradient norms must converge. A necessary condition for the convergence of this non-negative series is that the individual terms vanish:
\begin{equation}
    \lim_{K \to \infty} \|\nabla_{z_T} \mathcal{L}_{\text{surr}}(z_{T}^{(K)})\|_{\mathcal{Z}} = 0.
\end{equation}
This confirms that the trajectory asymptotically reaches a stationary point of the surrogate objective. 

Finally, we evaluate the exact physical gradient at this limit point. By applying the triangle inequality and the surrogate approximation bound from Assumption \ref{assump:bounds}, we strictly bound the exact gradient norm:
\begin{equation}
\begin{aligned} 
    \|\nabla_{z_T} \mathcal{L}_{\text{exact}}(z_{T}^{(\infty)})\|_{\mathcal{Z}} &\le \|\nabla_{z_T} \mathcal{L}_{\text{exact}}(z_{T}^{(\infty)}) - \nabla_{z_T} \mathcal{L}_{\text{surr}}(z_{T}^{(\infty)})\|_{\mathcal{Z}} + \|\nabla_{z_T} \mathcal{L}_{\text{surr}}(z_{T}^{(\infty)})\|_{\mathcal{Z}} \\ 
    &\le \delta. 
\end{aligned}
\end{equation}
This concludes the proof.
\end{proof}

\section{Experimental Results.}\label{Experiment}

\subsection{Dataset Preparation and Pretraining Setup}
For each physical domain, the offline pretraining phase involves two components: (1) training a Latent Diffusion Model (LDM) to capture the physical parameter distribution, and (2) training a Fourier Neural Operator (FNO) \cite{li2020fourier} on paired data generated via high-precision numerical solvers to act as the forward surrogate. The configurations are summarized below:

\paragraph{Electrical Impedance Tomography (EIT).} 
We use the LIDC dataset \cite{armato2011lung}, extracting 130,304 2D CT slices ($320 \times 320$) to train the LDM. For the FNO, we generate $3,000$ training and $200$ testing samples at a $256 \times 256$ resolution, with conductivity values bounded within $[0.01, 1.0]$. To generate the paired input data, boundary voltage measurements are lifted into a complete 2D internal potential field by solving the Laplace equation ($\Delta u = 0$) using a finite difference scheme. The FNO surrogate is configured with 4 layers and 8 Fourier modes.

\paragraph{Inverse Scattering.}
Using the InverseBench dataset \cite{zheng2025inversebench}, we train the LDM on 10,000 HL60 nucleus permittivity images. To generate high-fidelity paired data for the FNO, we solve the 2D Helmholtz equation using a 4th-order Finite Element Method (FEM) via NGSolve. The scattering system is simulated across 5 distinct frequencies (1.0 to 5.0 Hz) under 64 incident plane wave directions. The scattered fields are collected by 360 receivers on a 1.6m-radius ring and interpolated onto a $128 \times 128$ grid for FNO training.

\paragraph{Inverse Navier-Stokes (N-S).}
Using 10,000 initial state samples from InverseBench \cite{zheng2025inversebench}, we simulate the fluid flow at $\text{Re} = 200$. The forward physics are governed by the 2D incompressible Navier-Stokes equations with a Kolmogorov forcing term $f = -4\cos(4y)$. The equations are integrated on a high-resolution $512 \times 512$ grid ($\Delta t = 10^{-3}$) using a pseudo-spectral method. After a burn-in period of $T = 100$ to ensure fully developed turbulence, the vorticity fields are downsampled to a $128 \times 128$ resolution to train a 4-layer, 8-mode spatio-temporal FNO.

\subsection{Inference Setting and Intermediate Results}

\paragraph{EIT} For the EIT problem, the latent optimization is run for $3{,}000$ iterations with a step size of $5\times10^{-3}$ using an AdamW optimizer. At each iteration, a fixed $50$-step deterministic DDIM trajectory generates a forward prediction, which is compared against the measured voltage data on a $256\times256$ boundary-element mesh ($360$ electrodes equally spaced on a unit circle). Figure~\ref{fig:eit_inter_evolution} illustrates the iterative reconstruction process for the EIT problem. The top row displays the conductivity fields reconstructed at specific intervals, with the final column showing the Ground Truth (GT) distribution for reference. The bottom row depicts the corresponding absolute error maps between the current reconstruction and the GT. Starting from a random Gaussian latent code, the surrogate-guided prior rapidly resolves the boundary contours of inclusion bodies within the first few hundred iterations, converging stably to an accurate physical field.

\begin{figure}[htbp]
    \centering
    {\includegraphics[width=0.16\textwidth]{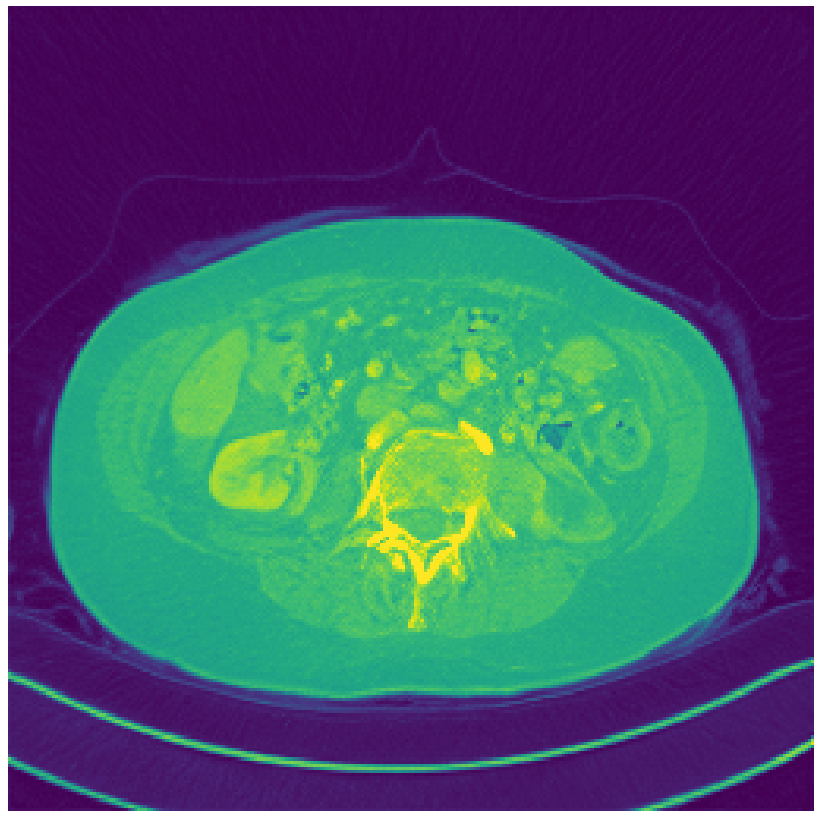}}
    {\includegraphics[width=0.16\textwidth]{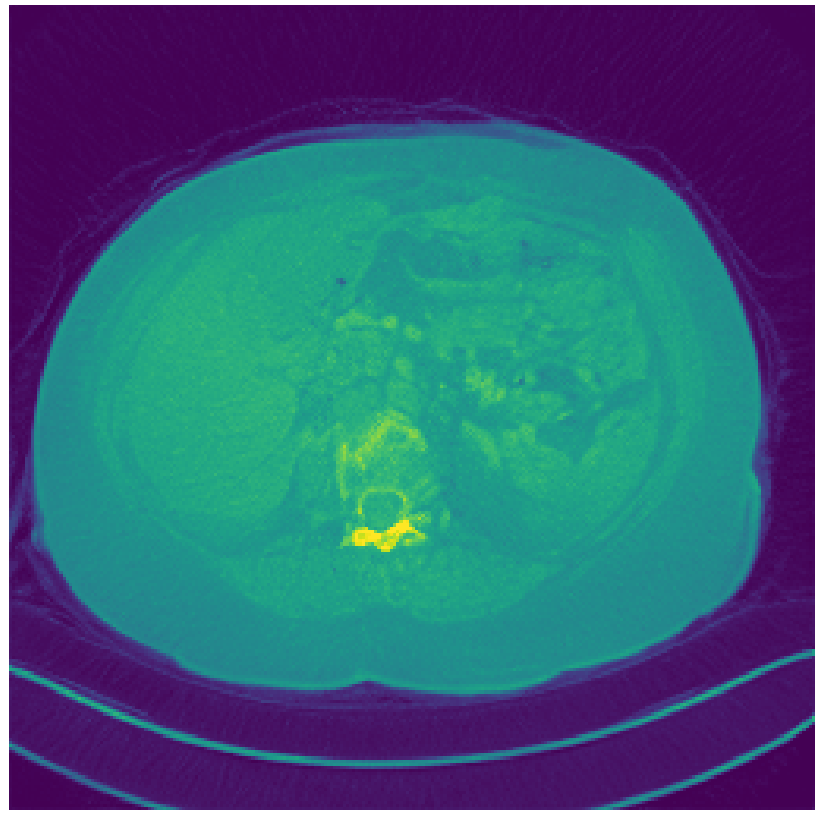}}
    {\includegraphics[width=0.16\textwidth]{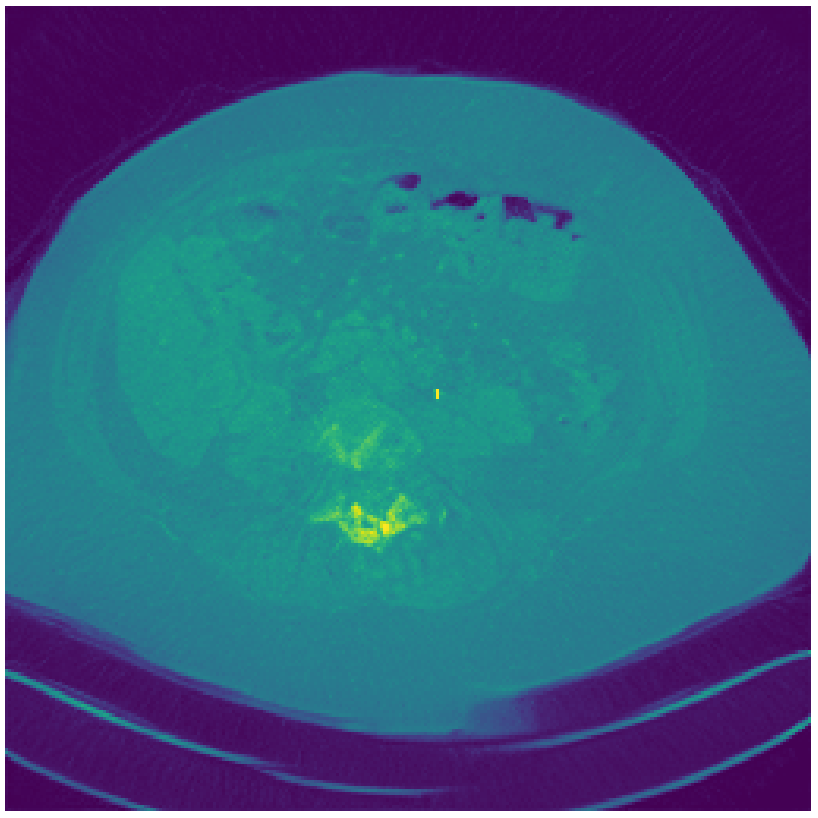}}
    {\includegraphics[width=0.16\textwidth]{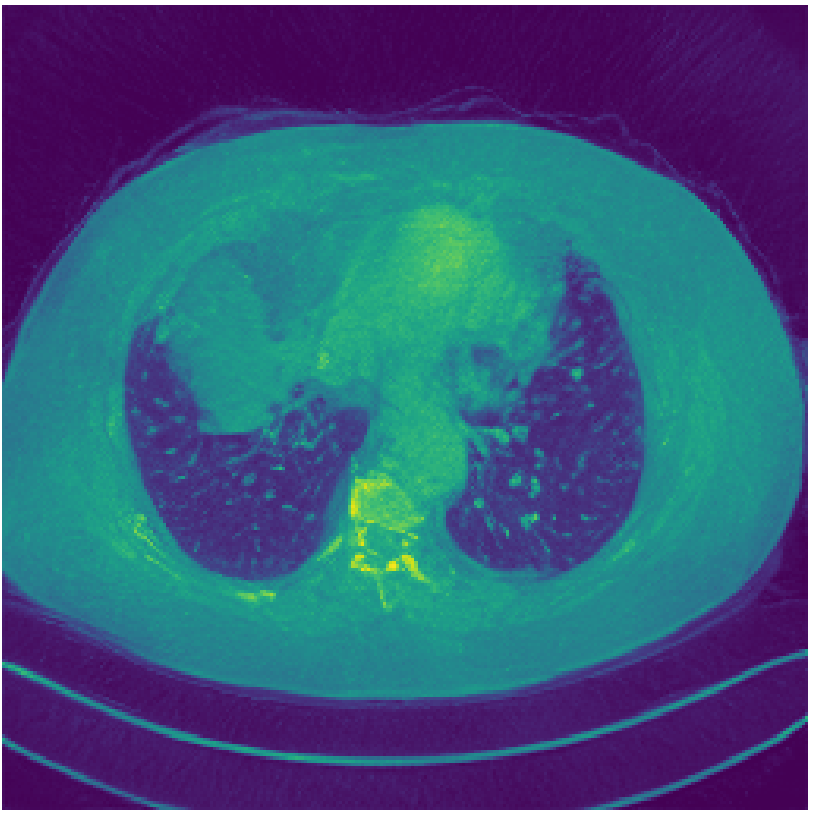}}
    {\includegraphics[width=0.16\textwidth]{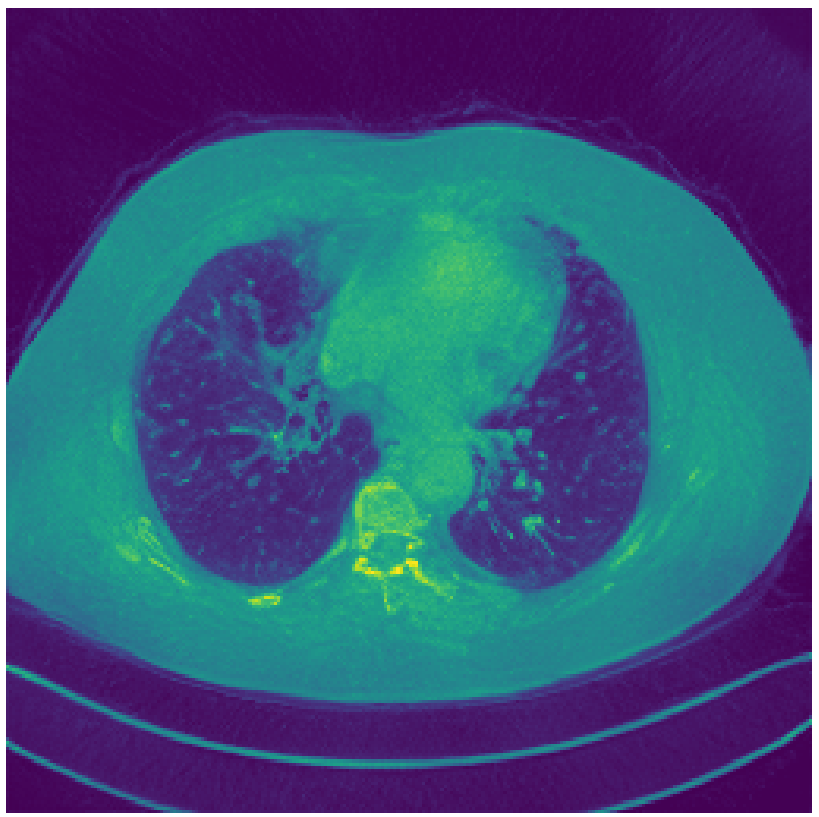}}
    {\includegraphics[width=0.16\textwidth]{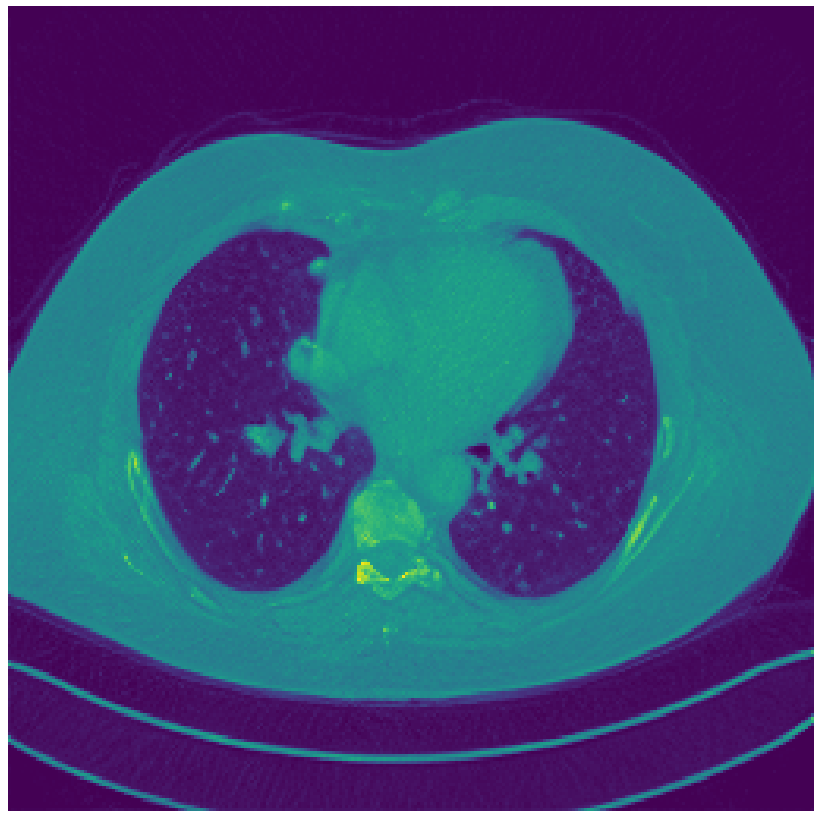}}
    
    \vspace{-0.1cm} 
    
    \hspace{-2.75cm}\subfigure[Iteration 0000]{\includegraphics[width=0.16\textwidth]{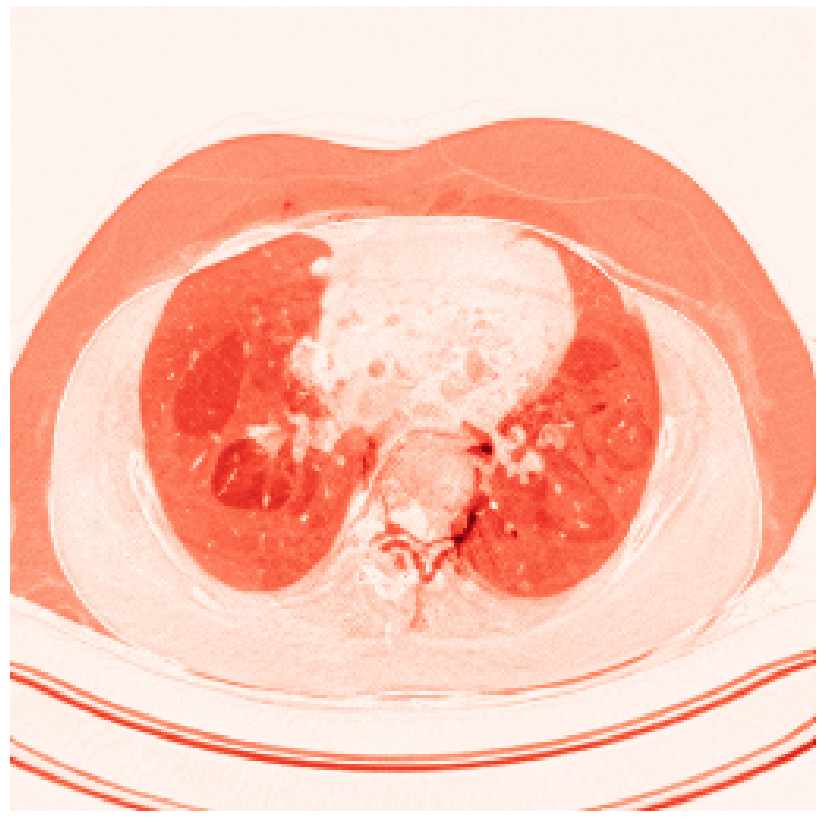}}
    \subfigure[Iteration 0050]{\includegraphics[width=0.16\textwidth]{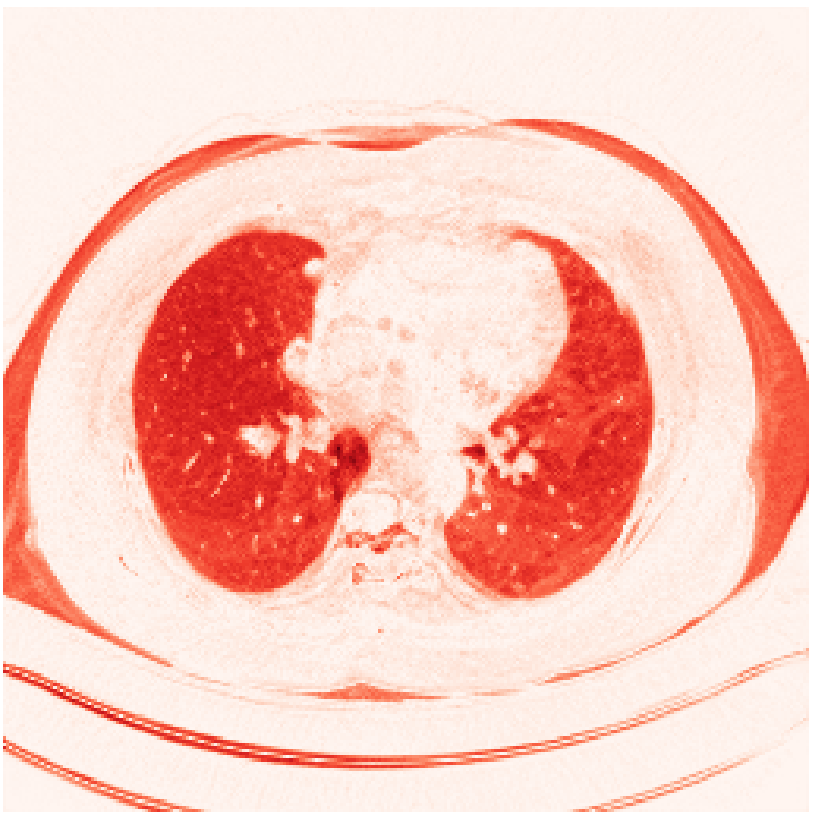}}
    \subfigure[Iteration 0100]{\includegraphics[width=0.16\textwidth]{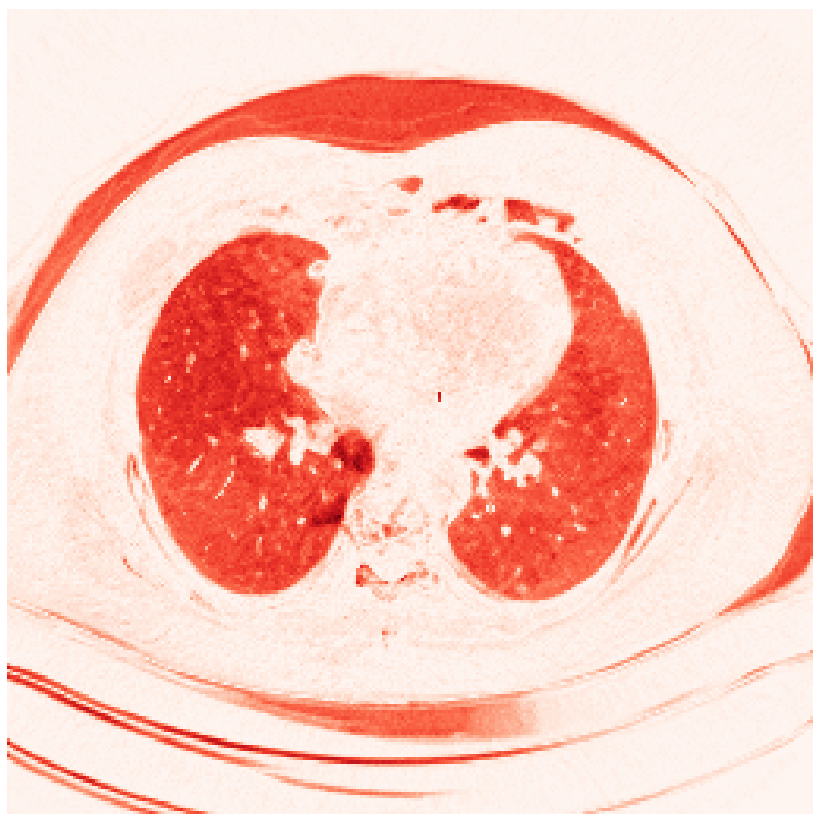}}
    \subfigure[Iteration 0500]{\includegraphics[width=0.16\textwidth]{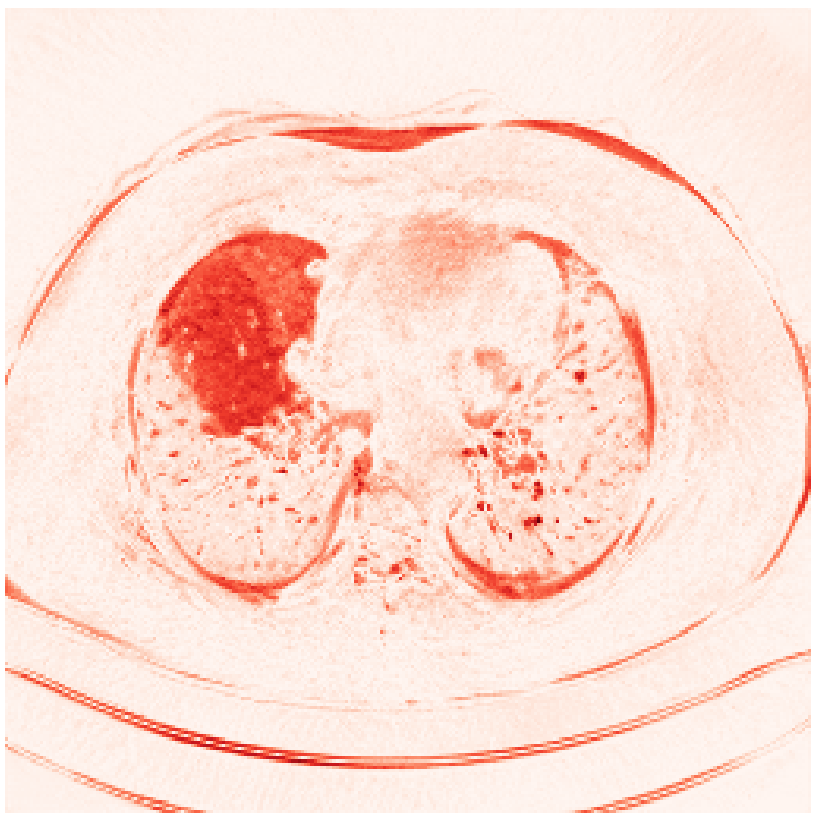}}
    \subfigure[Iteration 1000]{\includegraphics[width=0.16\textwidth]{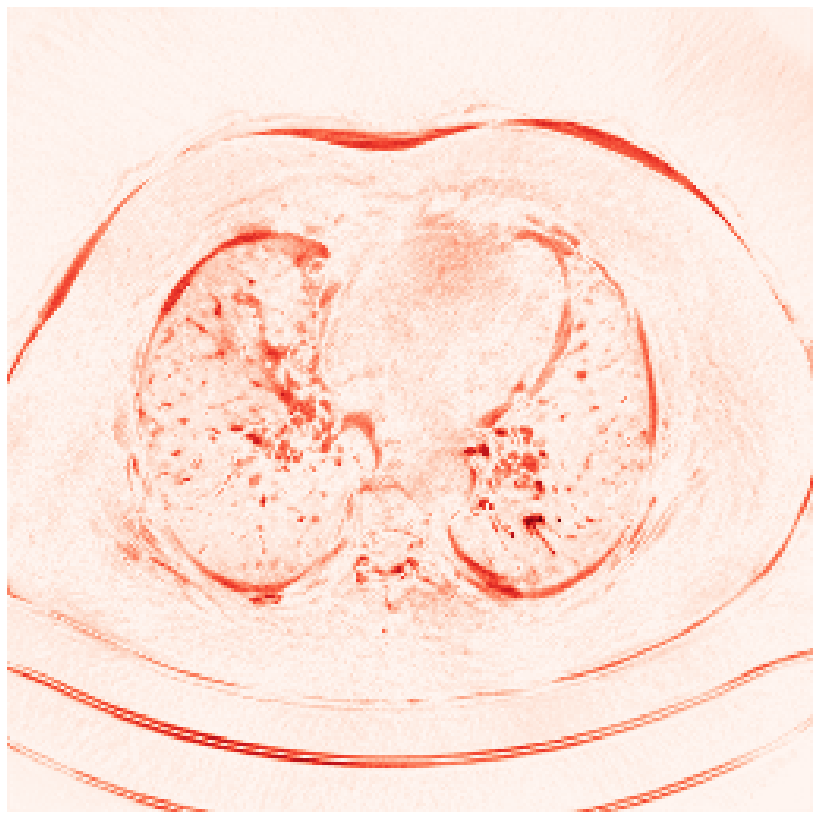}}

    \caption{Detailed iterative reconstruction of the conductivity distribution for EIT.}
    \label{fig:eit_inter_evolution}
\end{figure}

\paragraph{Inverse Scattering} For the Inverse Scattering problem (Figure~\ref{fig:is_inter}), the latent optimization proceeds over $3{,}000$ steps using an AdamW optimizer. The FNO surrogate is trained with $64$ propagation directions, $5$ Fourier modes, width $64$, and $6$ layers. At each step, $50$-step deterministic DDIM sampling projects the latent contrast to the physical domain, and the scattered field residual is evaluated at $360$ equispaced sensors ($L_x=L_y=1.0$, radius $=1.6$). The physics loss combines an absolute term (weight $=5.0$) and a relative term (weight $=2.0$). Starting from a randomly initialized scattering contrast, the optimization recovers complex internal structural patterns within the first several hundred iterations, while maintaining physical consistency between the predicted and measured scattered fields throughout the entire trajectory.

\begin{figure}[htbp]
    \centering
    {\includegraphics[width=0.16\textwidth]{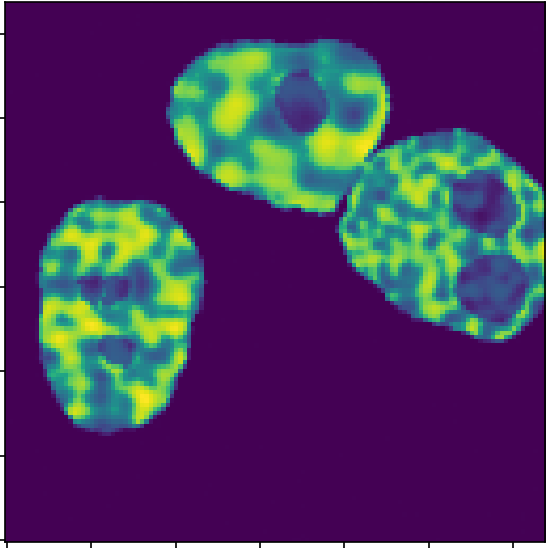}}
    {\includegraphics[width=0.16\textwidth]{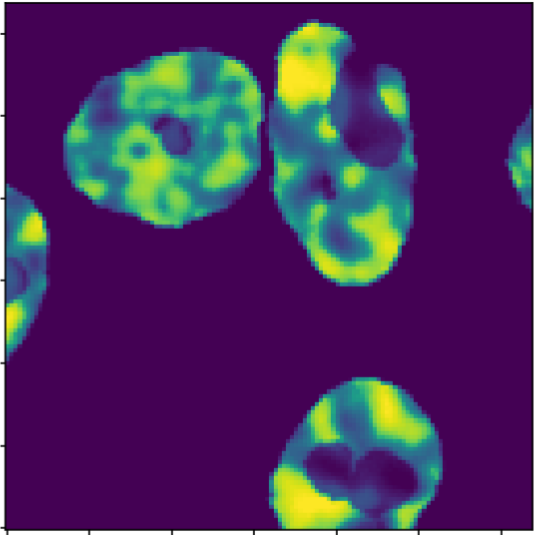}}
    {\includegraphics[width=0.16\textwidth]{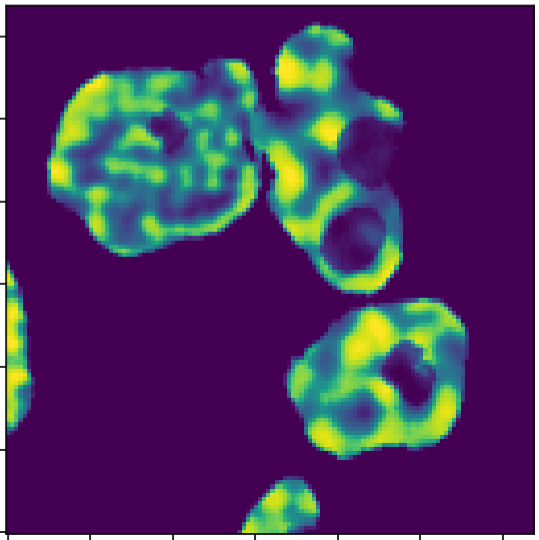}}
    {\includegraphics[width=0.16\textwidth]{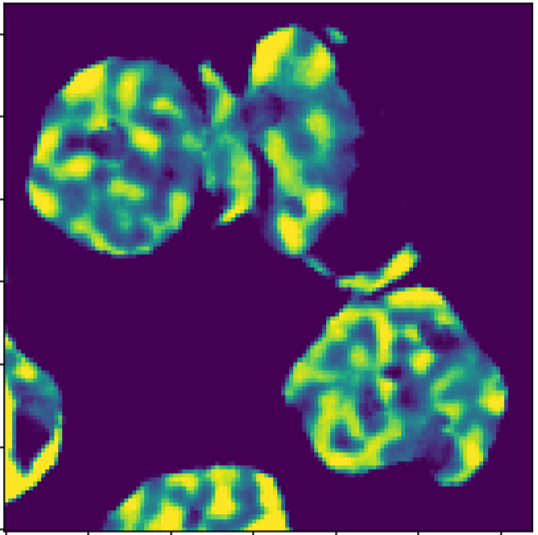}}
    {\includegraphics[width=0.16\textwidth]{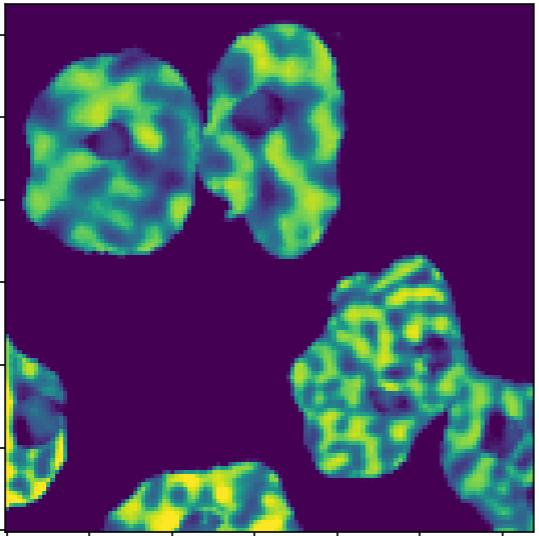}}
    {\includegraphics[width=0.16\textwidth]{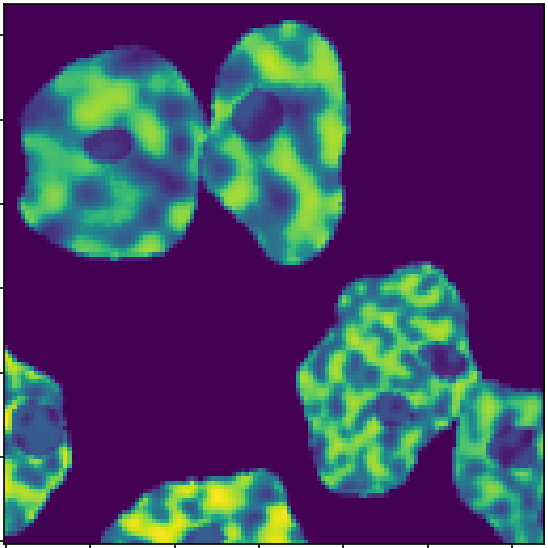}}
    
    \vspace{-0.1cm} 
    
    \subfigure[Iteration 0000]{\includegraphics[width=0.16\textwidth]{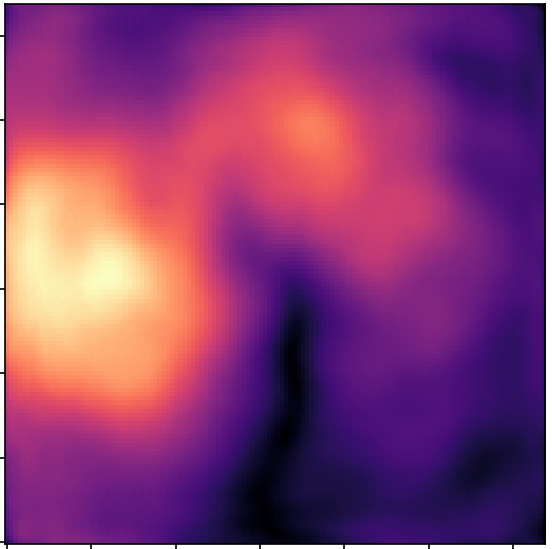}}
    \subfigure[Iteration 0050]{\includegraphics[width=0.16\textwidth]{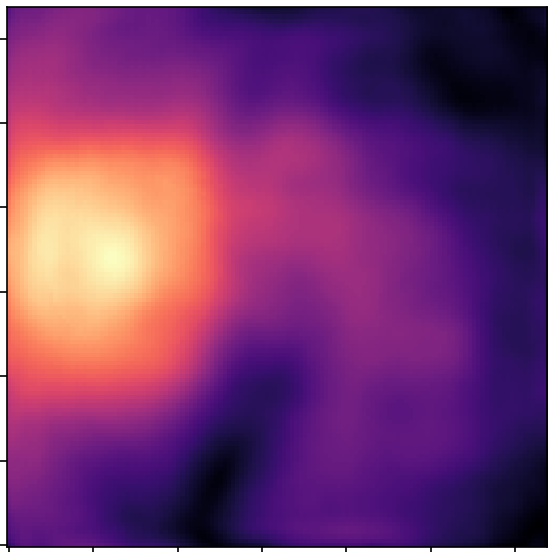}}
    \subfigure[Iteration 0200]{\includegraphics[width=0.16\textwidth]{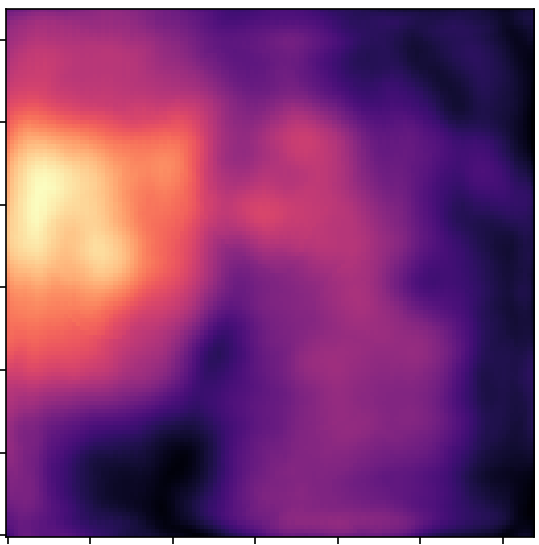}}
    \subfigure[Iteration 1000]{\includegraphics[width=0.16\textwidth]{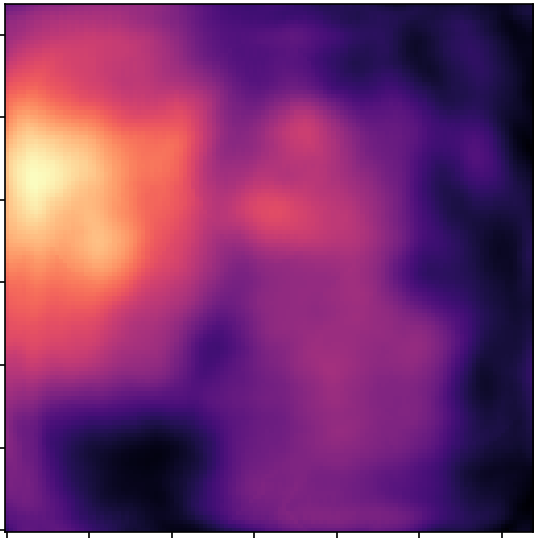}}
    \subfigure[Iteration 2000]{\includegraphics[width=0.16\textwidth]{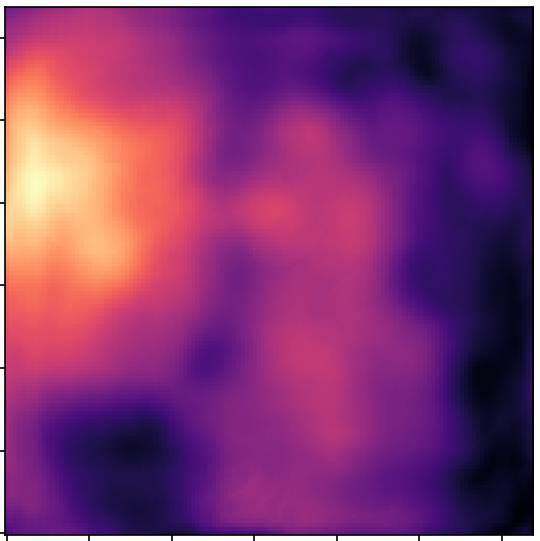}}
    \subfigure[Ground Truth]{\includegraphics[width=0.16\textwidth]{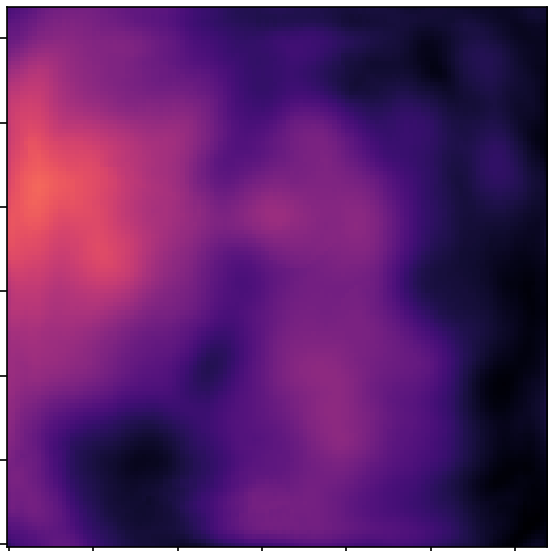}}
    
    \caption{Detailed iterative reconstruction of the scattering contrast.}
    \label{fig:is_inter}
\end{figure}

\paragraph{Inverse N-S}
For the Inverse Navier-Stokes problem, the latent optimization runs for $2{,}000$ iterations with a step size of $1\times10^{-2}$ using an Adam optimizer ($\beta_1=0.9$, $\beta_2=0.999$, $\epsilon=10^{-8}$). The autoencoder compresses the $32\times32$ vorticity field ($1{,}024$ pixels) into a $128$-dimensional latent code, and the diffusion model operates in the corresponding $128$-dimensional latent space with $1{,}000$ inference steps. The FNO forward model is trained on $40$ timesteps with $20$ neurons and $8$ Fourier modes, and the measurement loss is the $L_2$ distance between the predicted velocity field and the target observations. As demonstrated in Figure~\ref{fig:ns_fast_convergence}, coarse flow topology emerges as early as Iteration 20, and the prediction aligns closely with the ground truth by Iteration 80, confirming that the latent prior effectively constrains the trajectory to a physically valid manifold, accelerating convergence.

\begin{figure}[htbp]
    \centering
    {\includegraphics[width=0.16\textwidth]{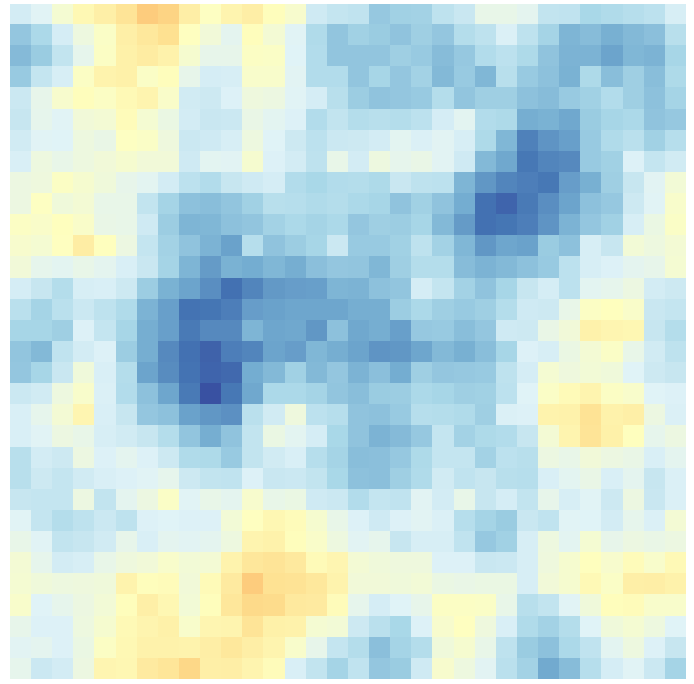}}
    {\includegraphics[width=0.16\textwidth]{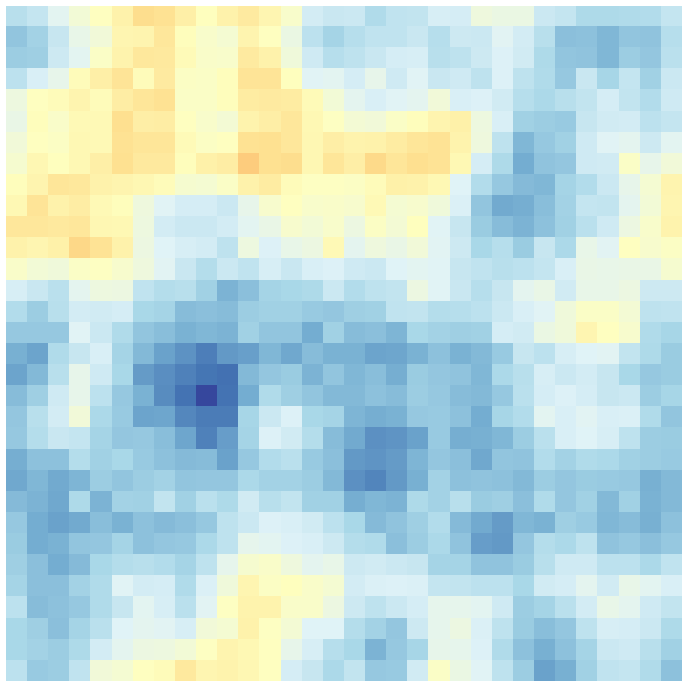}}
    {\includegraphics[width=0.16\textwidth]{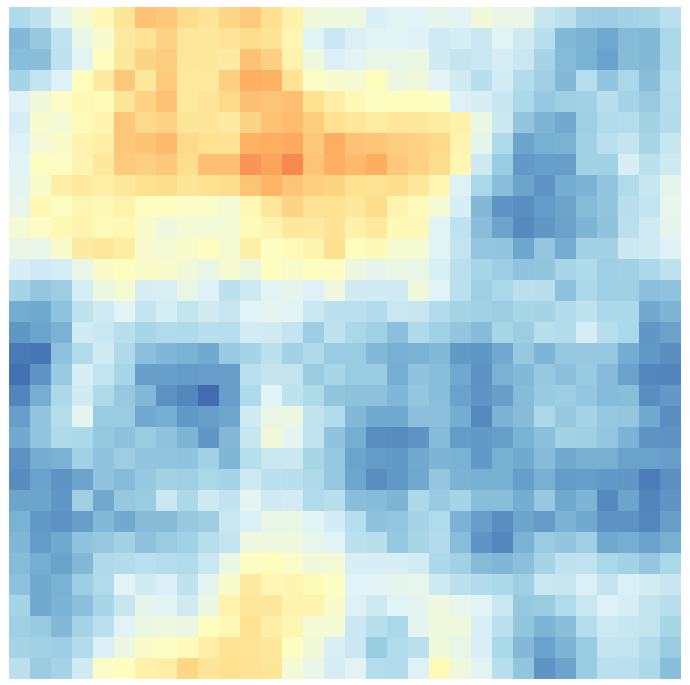}}
    {\includegraphics[width=0.16\textwidth]{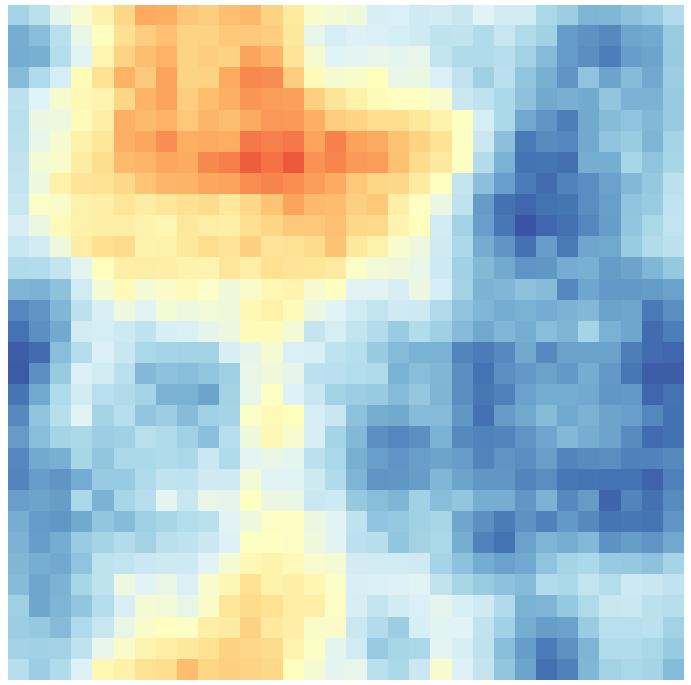}}
    {\includegraphics[width=0.16\textwidth]{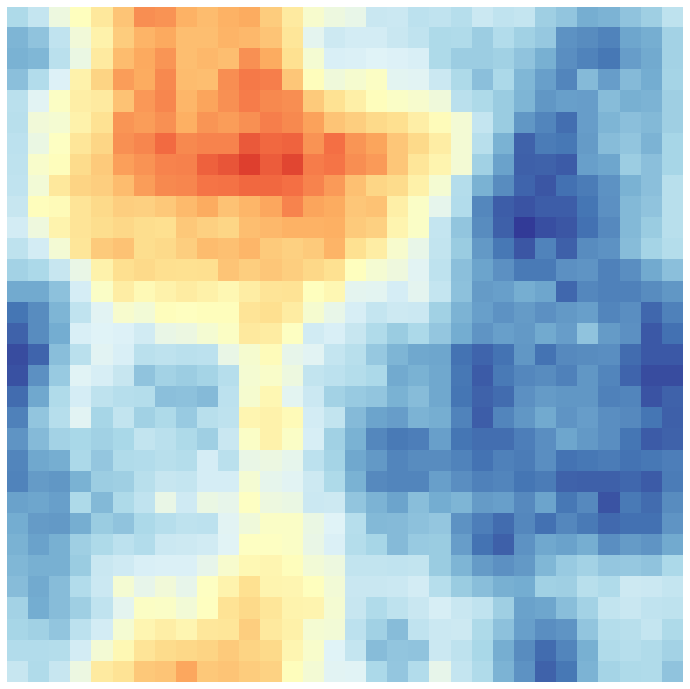}}
    {\includegraphics[width=0.16\textwidth]{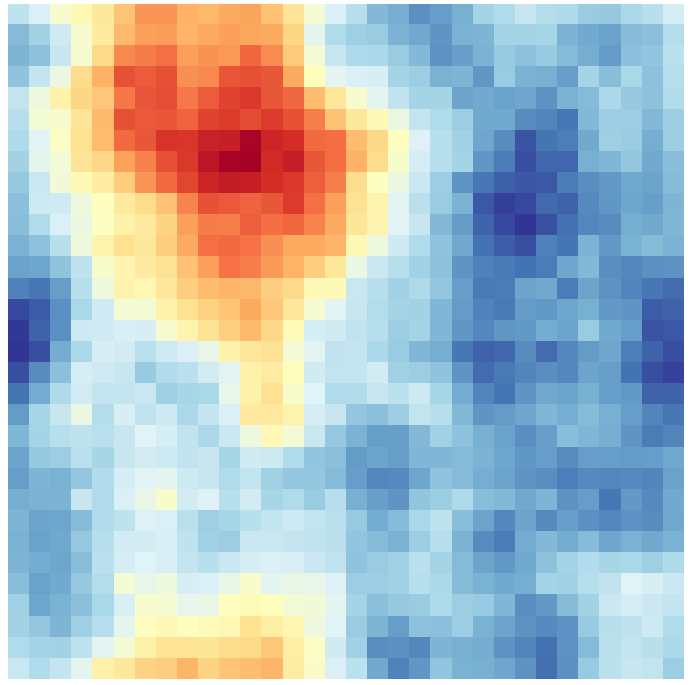}}
    
    \vspace{-0.1cm} 
    
    \subfigure[Iteration 0000]{\includegraphics[width=0.16\textwidth]{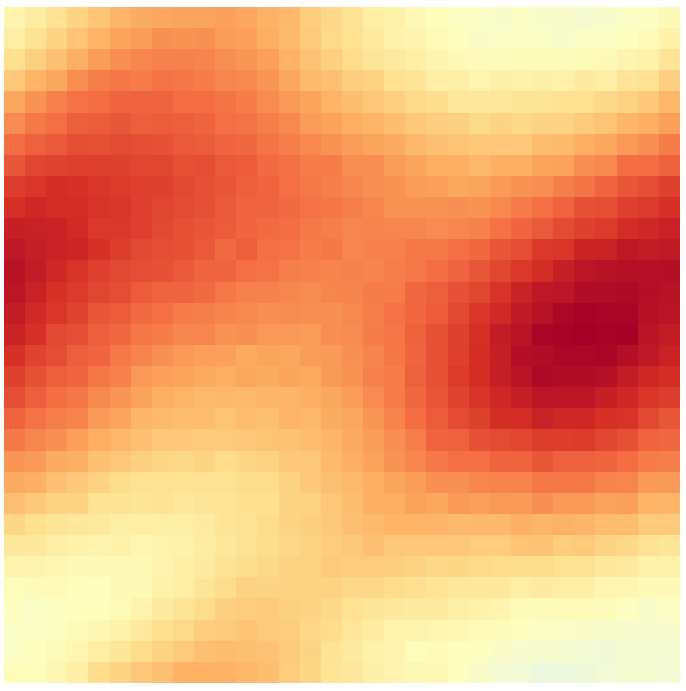}}
    \subfigure[Iteration 0020]{\includegraphics[width=0.16\textwidth]{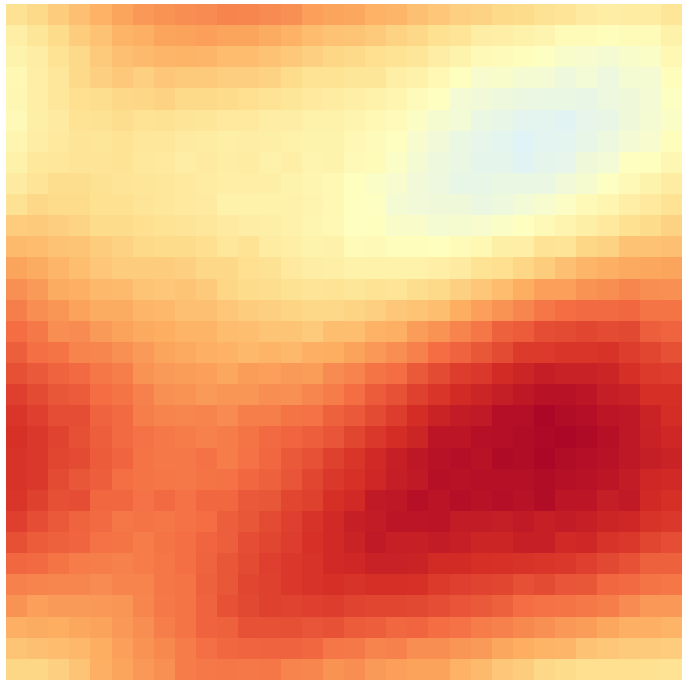}}
    \subfigure[Iteration 0040]{\includegraphics[width=0.16\textwidth]{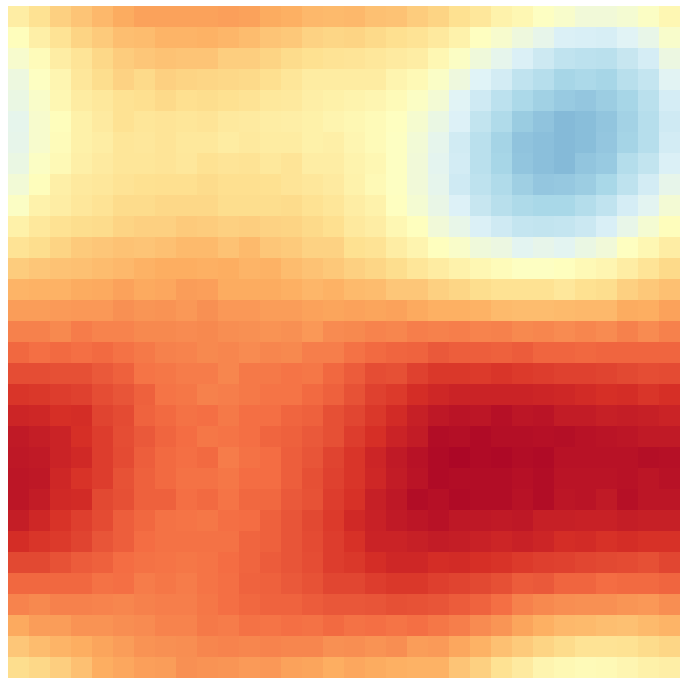}}
    \subfigure[Iteration 0060]{\includegraphics[width=0.16\textwidth]{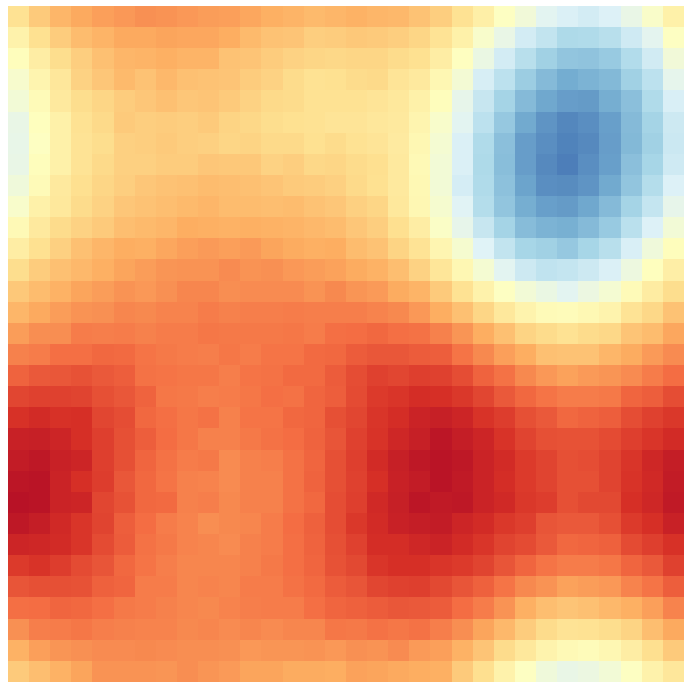}}
    \subfigure[Iteration 0080]{\includegraphics[width=0.16\textwidth]{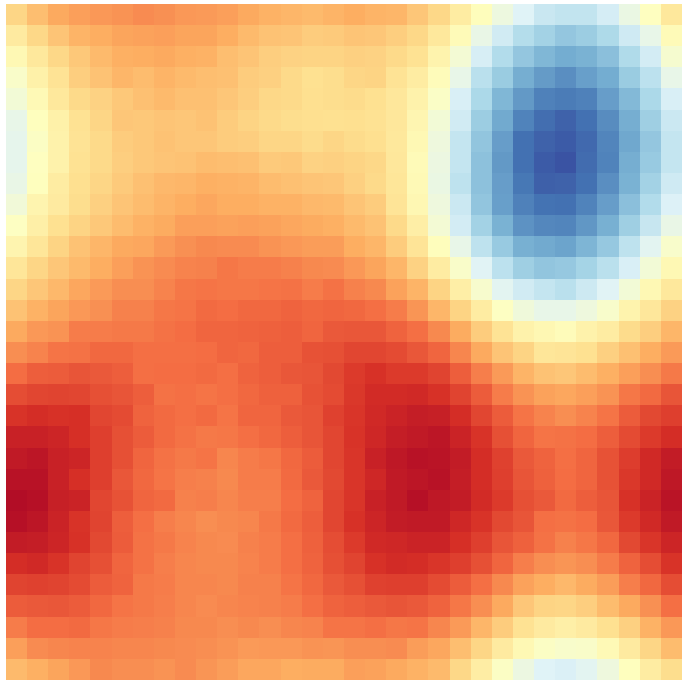}} 
    \subfigure[Ground Truth]{\includegraphics[width=0.16\textwidth]{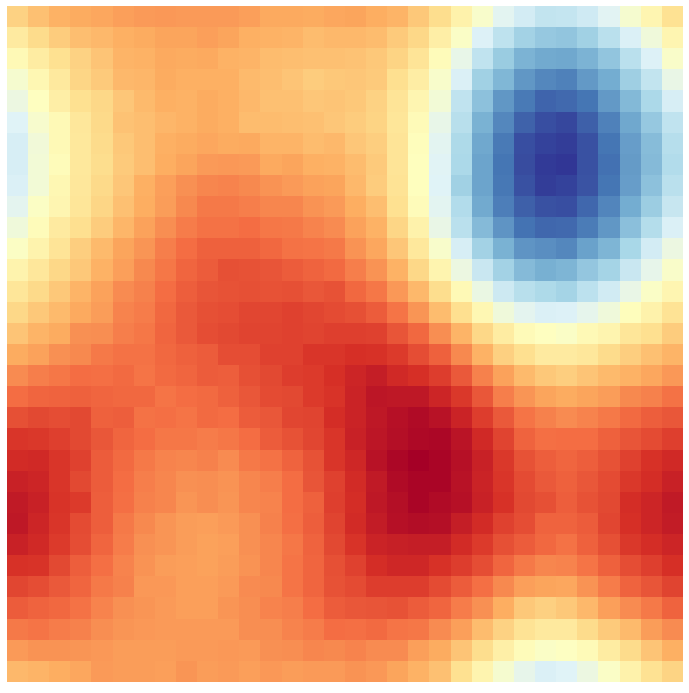}}
    
    \caption{Detailed iterative reconstruction of the initial vorticity field for the Inverse N-S problem.}
    \label{fig:ns_fast_convergence}
\end{figure}

\subsection{Main Evaluation Results}
We compare our proposed method against several established approaches to evaluate its performance. We evaluate recent neural operator baselines including FNOs \cite{li2020fourier}, PINO \cite{li2024physics}, and DeepONet \cite{lu2019deeponet}. Note that for boundary-value inverse problems such as EIT and inverse scattering, the raw observation $f$ is defined strictly on the domain boundary $\partial \Omega$ rather than as a full spatial image. To directly employ these volumetric neural operators for inversion, we dimensionally lift the boundary data by computing its harmonic extension. Specifically, we solve the source-free Laplace equation $\Delta u = 0$ in $\Omega$ subject to the Dirichlet boundary condition $u|_{\partial \Omega} = f$. The resulting interior field $u$ is then utilized as the spatial input for the neural operators to predict the unknown physical parameter $a$. Furthermore, we include DiffusionPDE \cite{raissi2019physics} as a key baseline.

\begin{figure}[htbp] 
    \centering
    \small
    \setlength{\tabcolsep}{1.2pt}
    
    \begin{tabular}{ccccccc}
        GT & \textbf{Ours} & DiffusionPDE & FNO & DeepONet & PINO & PINN \\ \addlinespace[2pt]
        
        \includegraphics[width=0.135\textwidth]{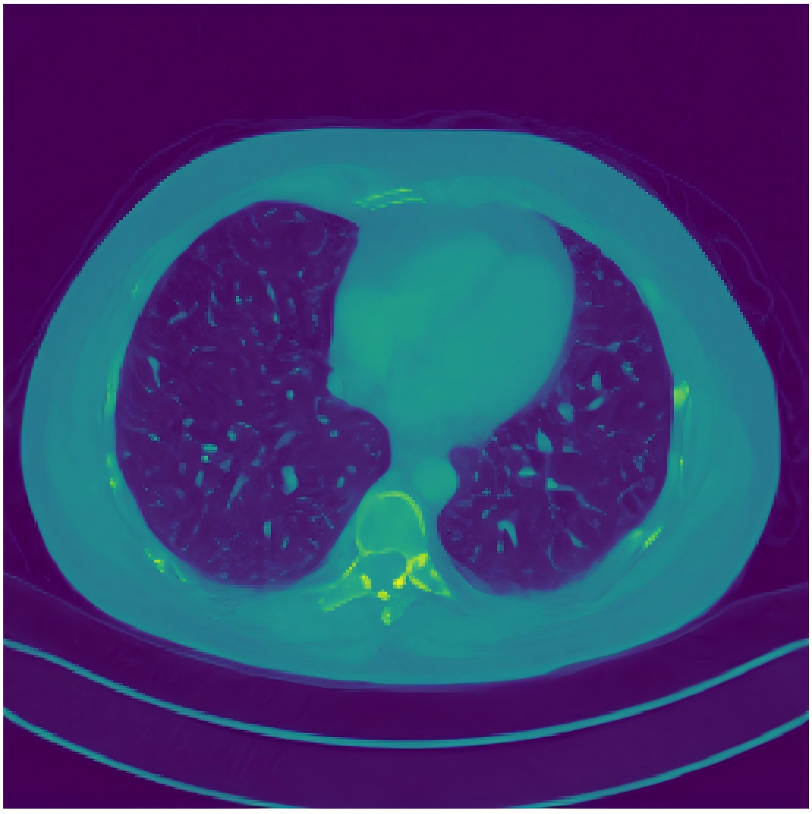} &
        \includegraphics[width=0.135\textwidth]{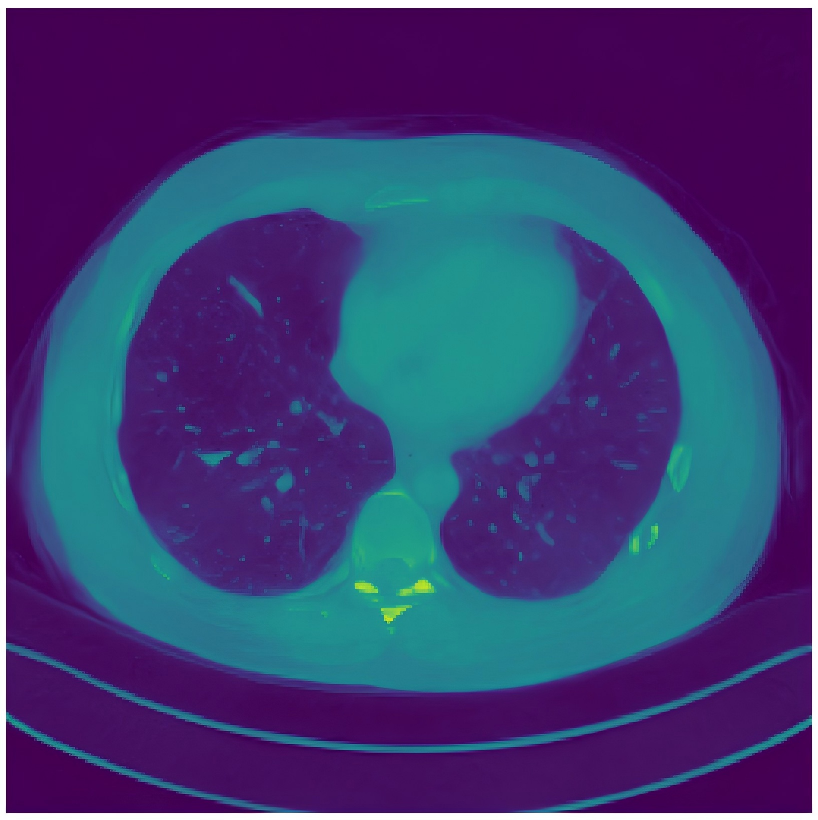} &
        \includegraphics[width=0.135\textwidth]{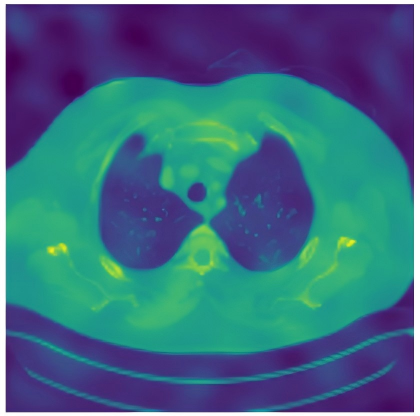} &
        \includegraphics[width=0.135\textwidth]{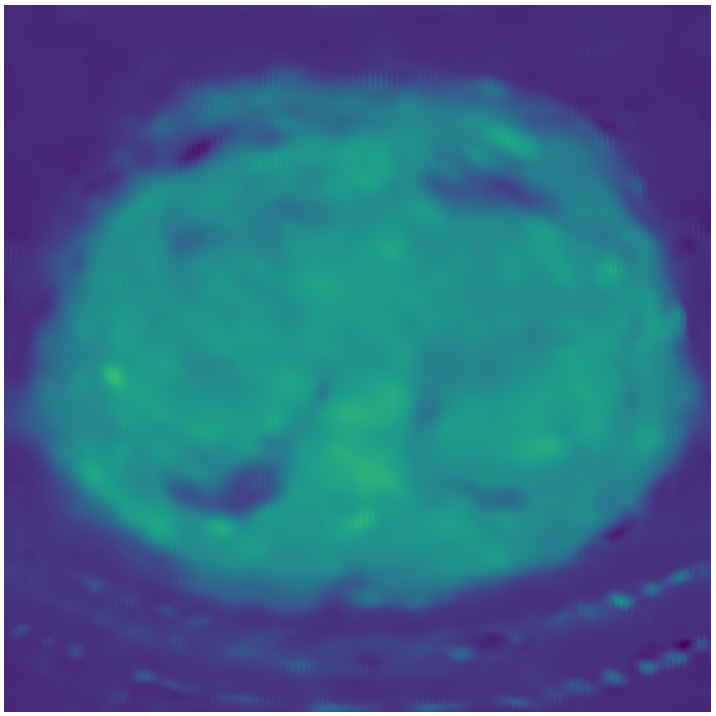} &
        \includegraphics[width=0.135\textwidth]{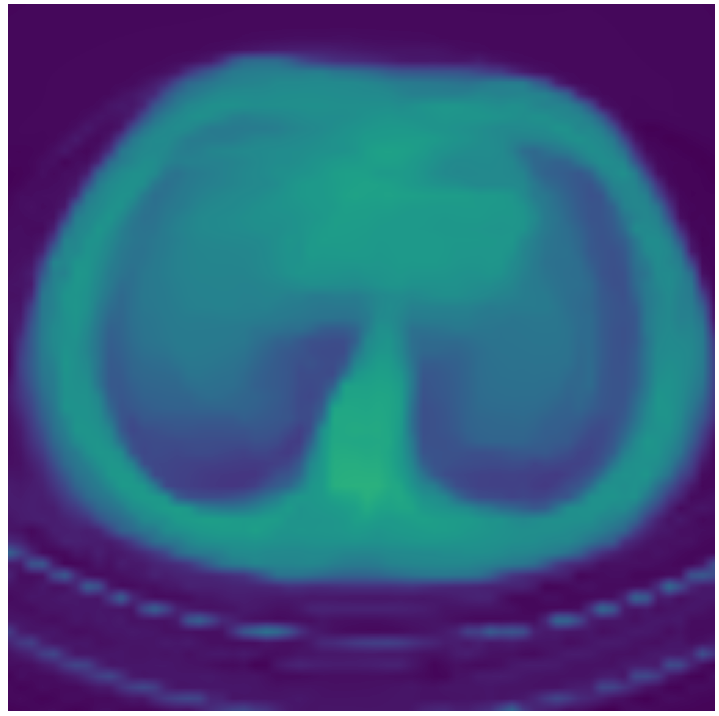} &
        \includegraphics[width=0.135\textwidth]{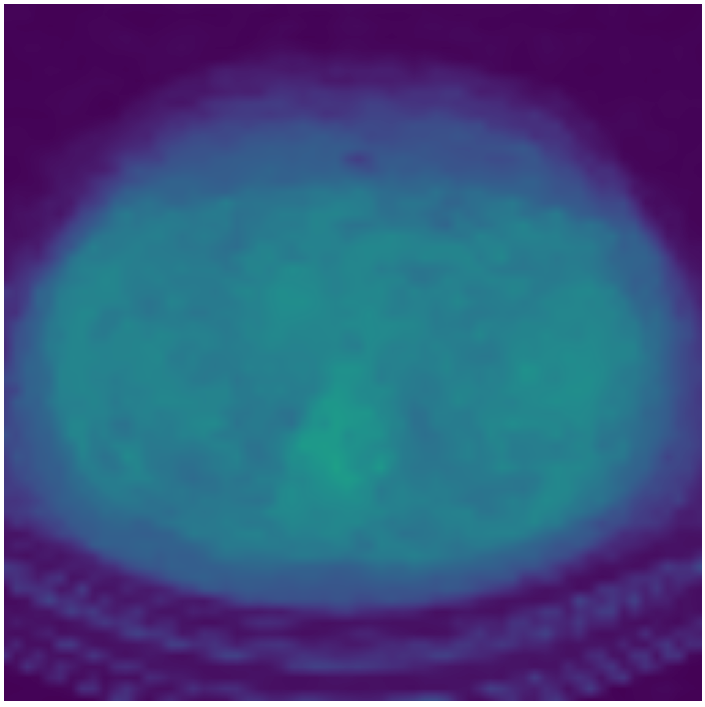} &
        \includegraphics[width=0.135\textwidth]{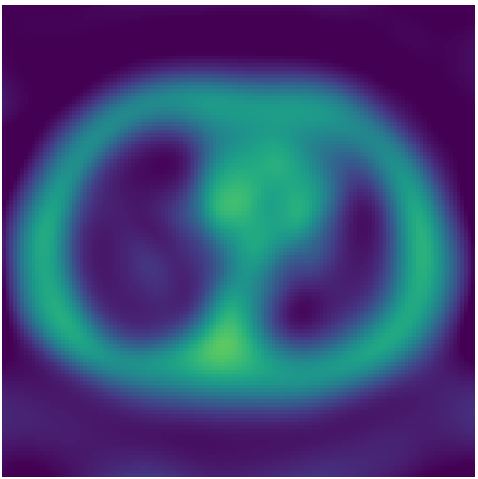} \\
        \multicolumn{7}{c}{\textit{(a) Electrical Impedance Tomography}} \\[8pt] 
        
        \includegraphics[width=0.135\textwidth]{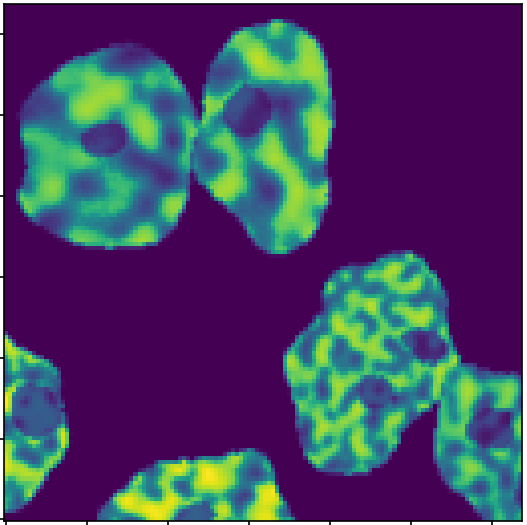} &
        \includegraphics[width=0.135\textwidth]{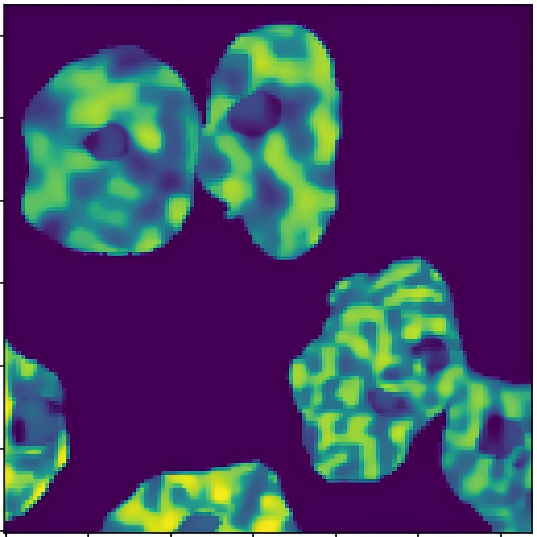} &
        \includegraphics[width=0.135\textwidth]{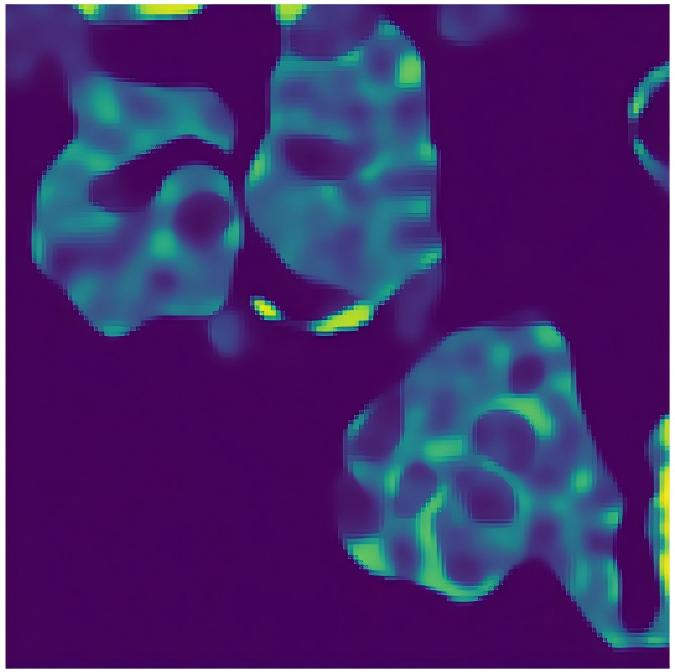} &
        \includegraphics[width=0.135\textwidth]{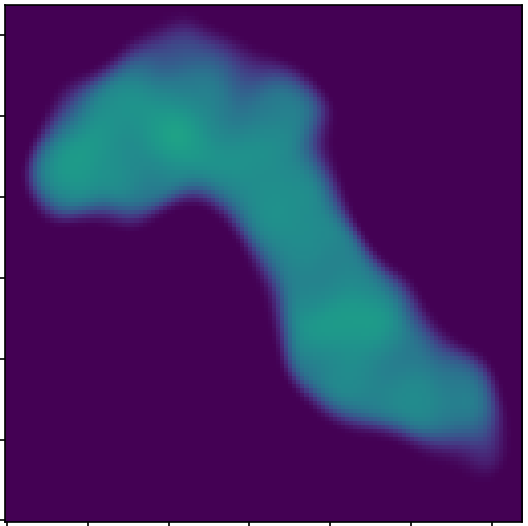} &
        \includegraphics[width=0.135\textwidth]{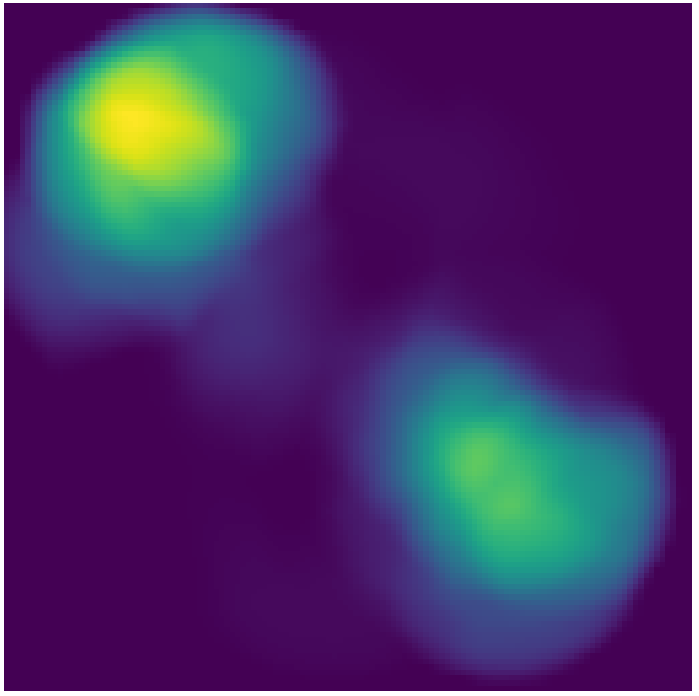} &
        \includegraphics[width=0.135\textwidth]{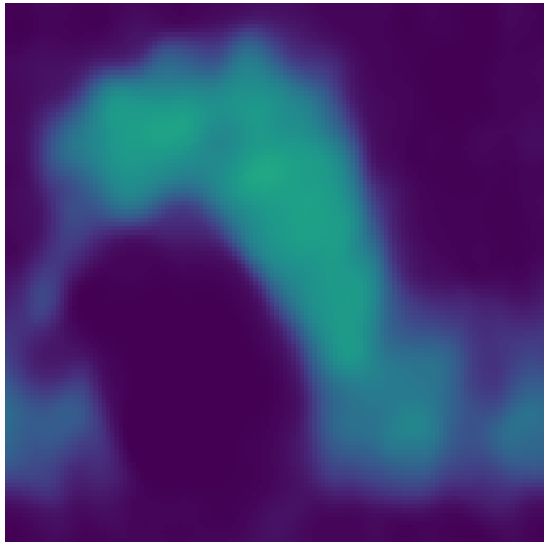} &
        \includegraphics[width=0.135\textwidth]{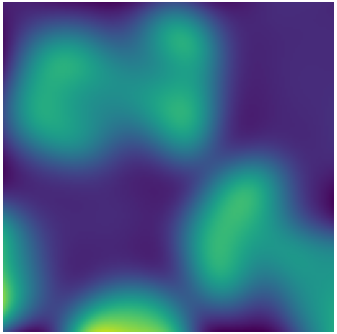} \\
        \multicolumn{7}{c}{\textit{(b) Inverse Scattering}} \\[8pt] 
        
        \includegraphics[width=0.135\textwidth]{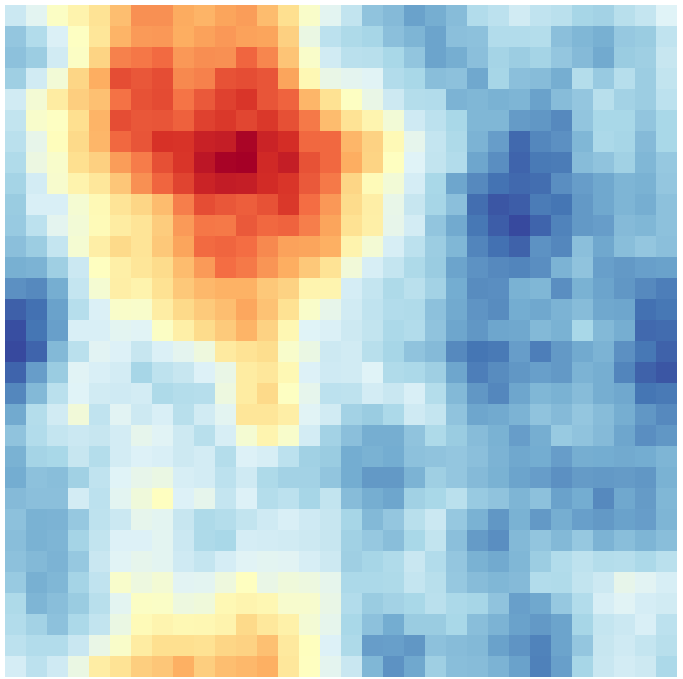} &
        \includegraphics[width=0.135\textwidth]{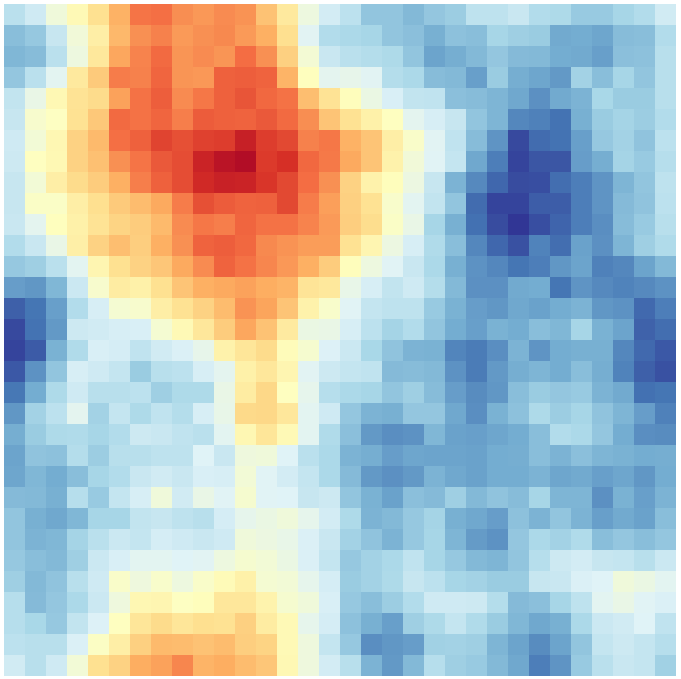} &
        \includegraphics[width=0.135\textwidth]{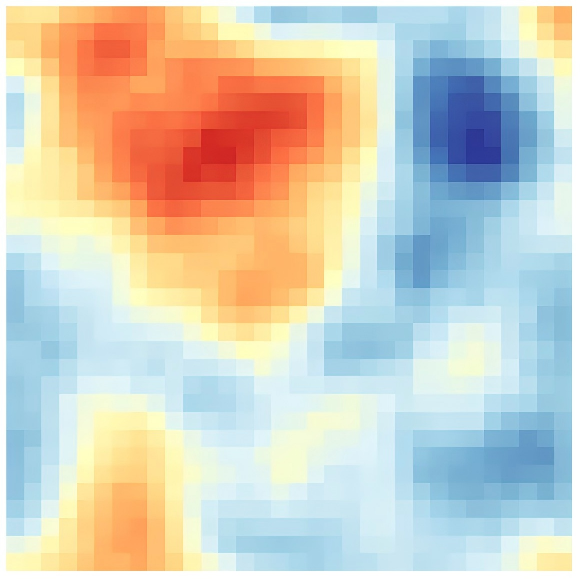} &
        \includegraphics[width=0.135\textwidth]{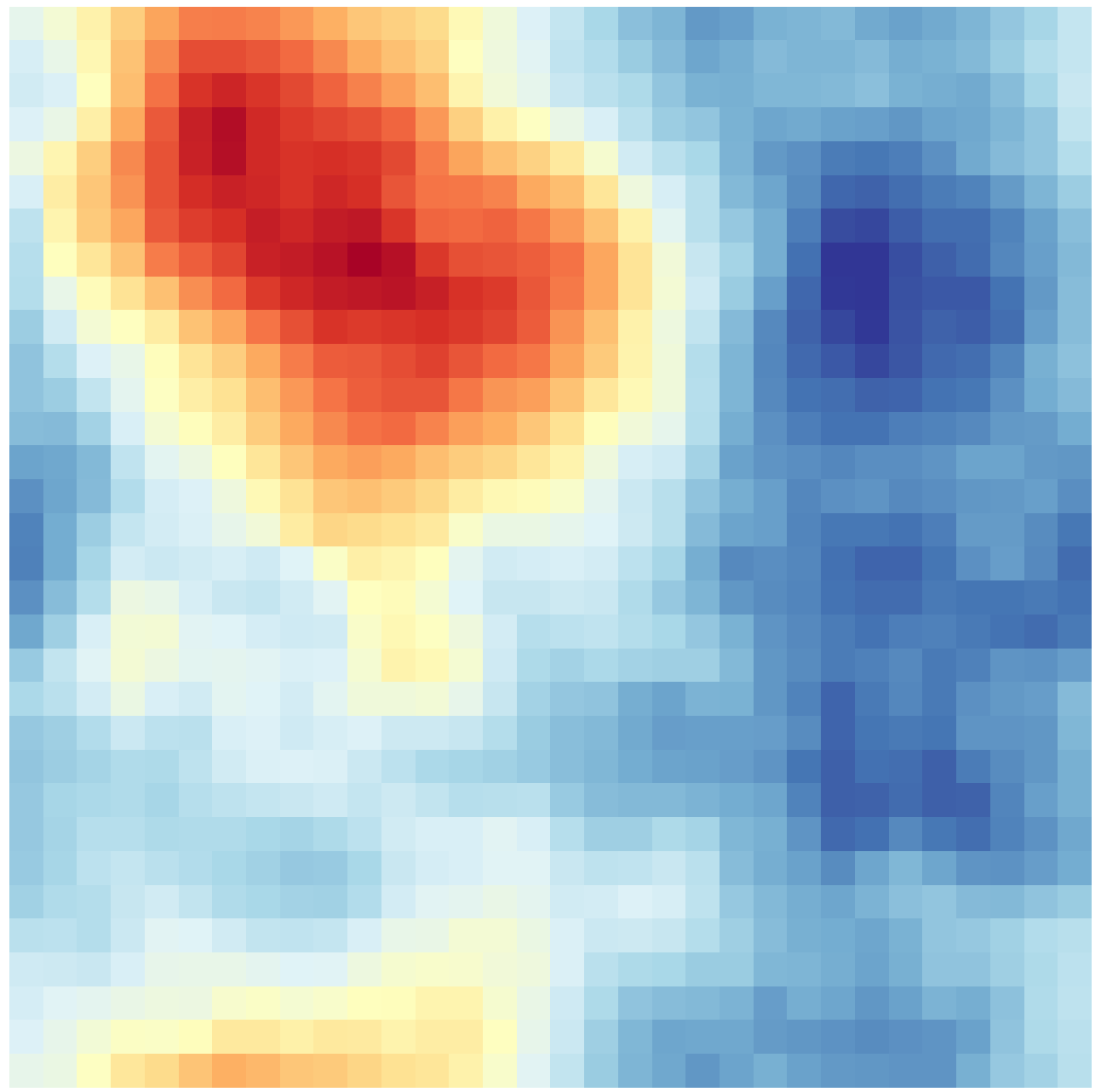} &
        \includegraphics[width=0.135\textwidth]{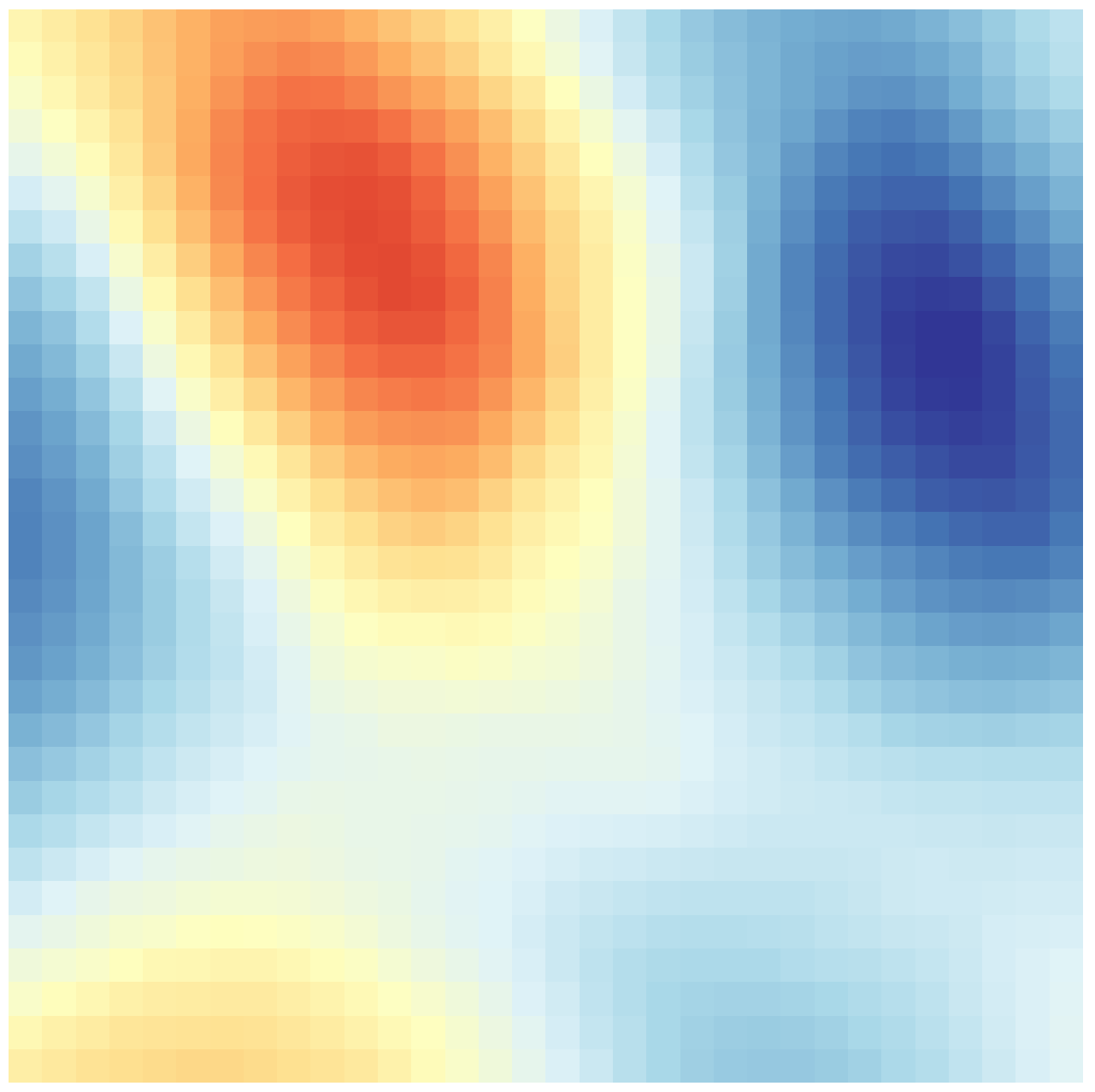} &
        \includegraphics[width=0.135\textwidth]{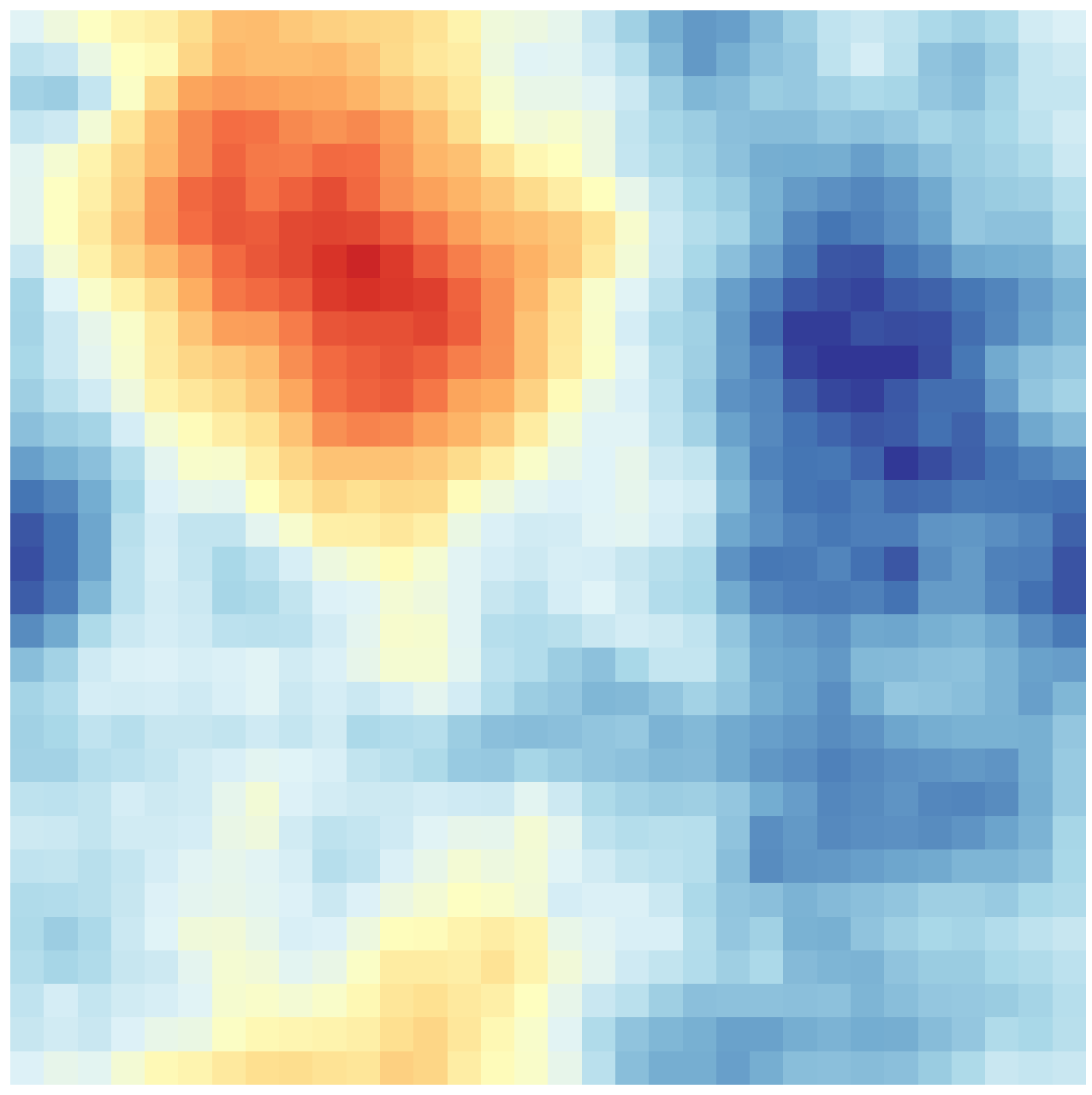} &
        \includegraphics[width=0.135\textwidth]{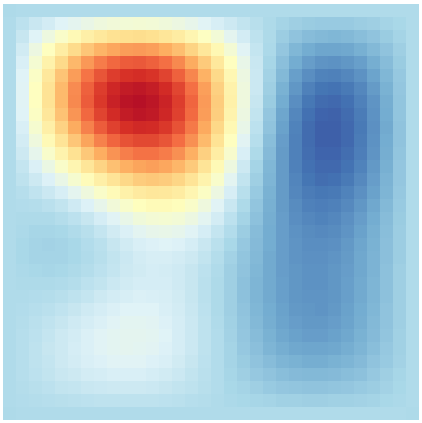} \\
        \multicolumn{7}{c}{\textit{(c) Inverse Navier-Stokes}} \\ 
    \end{tabular}
    
    \caption{Qualitative comparison of reconstruction results.}
    \label{fig:main_comparison}
\end{figure}

The qualitative comparison of our proposed framework against various baselines is illustrated in Figure~\ref{fig:main_comparison}. We visualize the reconstruction results for the EIT, Inverse Scattering, and Inverse N-S problems to highlight the structural fidelity and noise-suppression capabilities of our method.

\begin{figure}[htbp]
    \centering
    \small
    \setlength{\tabcolsep}{1.2pt}
    
    \begin{tabular}{ccccccc}
        GT & \textbf{Ours} & DiffusionPDE & FNO & DeepONet & PINO & PINN \\ \addlinespace[2pt]
        
        \includegraphics[width=0.138\textwidth]{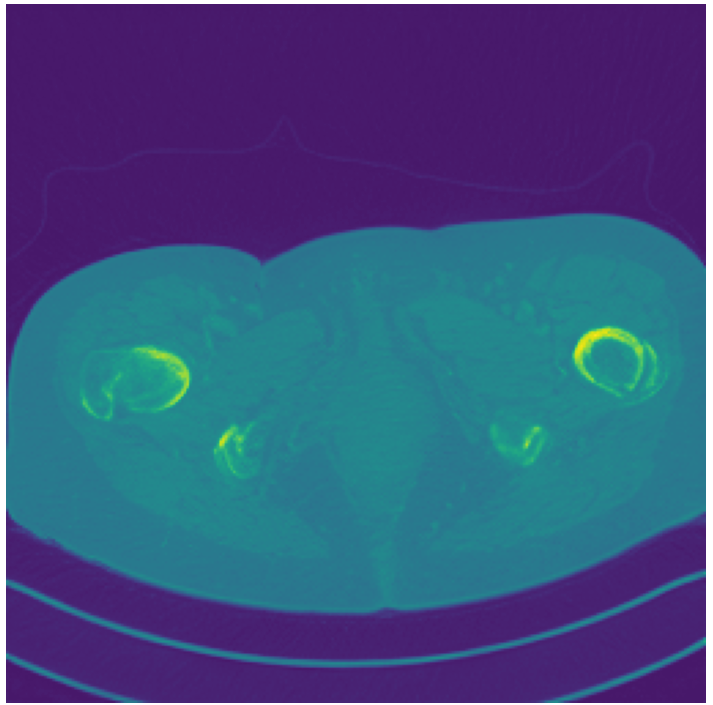} &
        \includegraphics[width=0.138\textwidth]{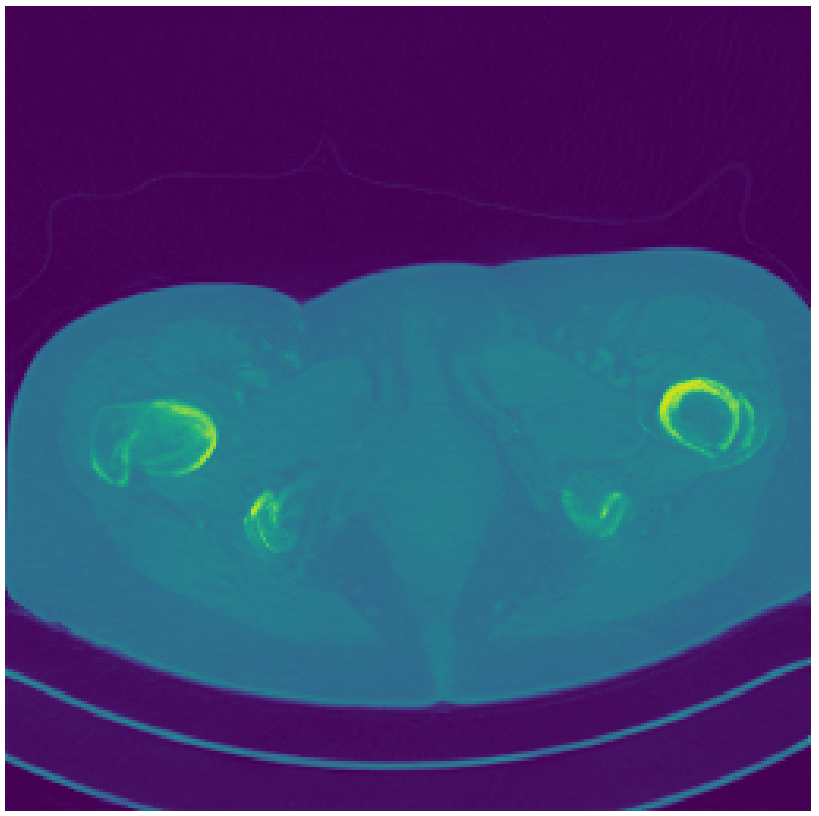} &
        \includegraphics[width=0.138\textwidth]{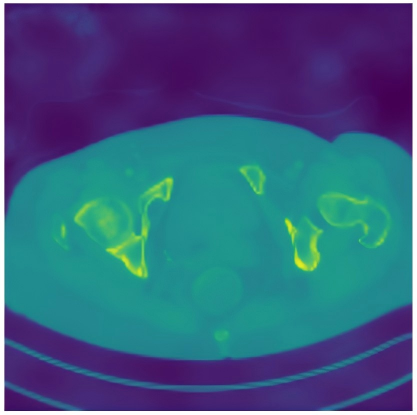} &
        \includegraphics[width=0.138\textwidth]{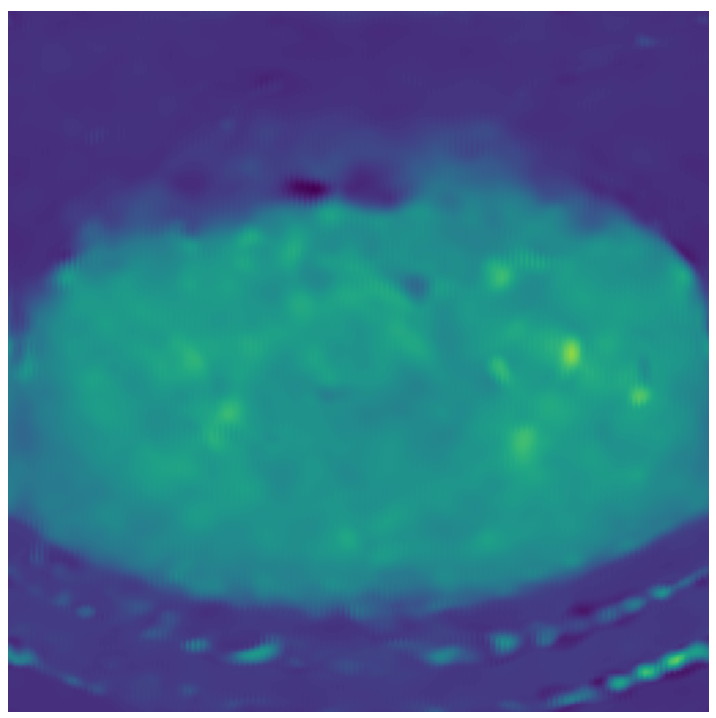} &
        \includegraphics[width=0.138\textwidth]{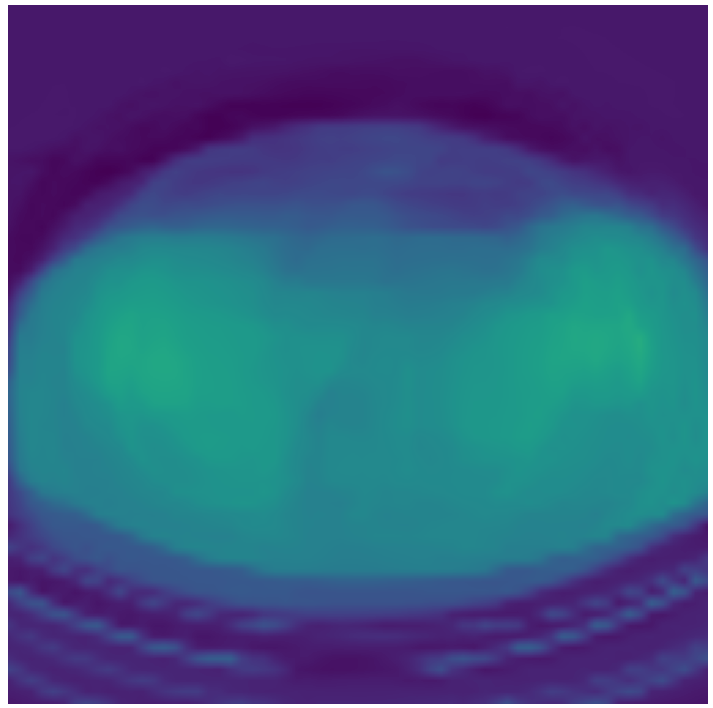} &
        \includegraphics[width=0.138\textwidth]{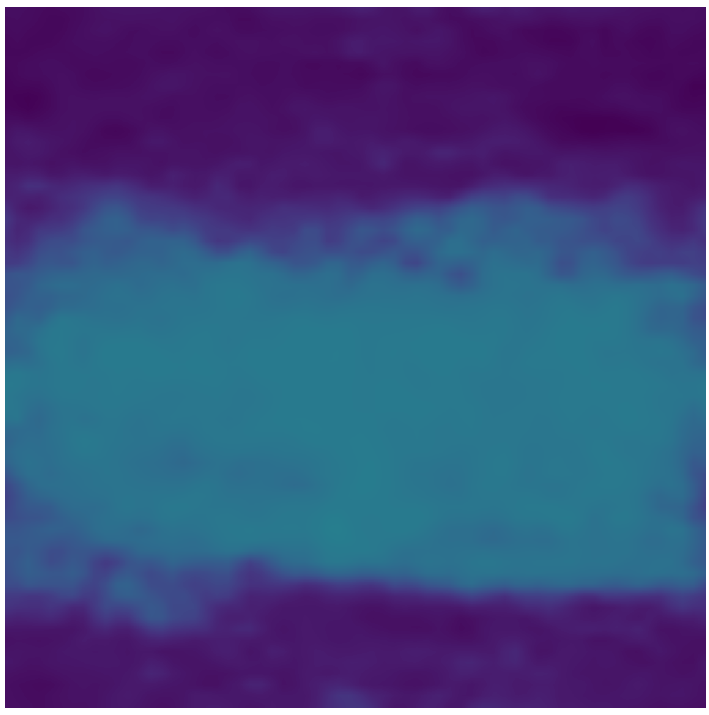} &
        \includegraphics[width=0.138\textwidth]{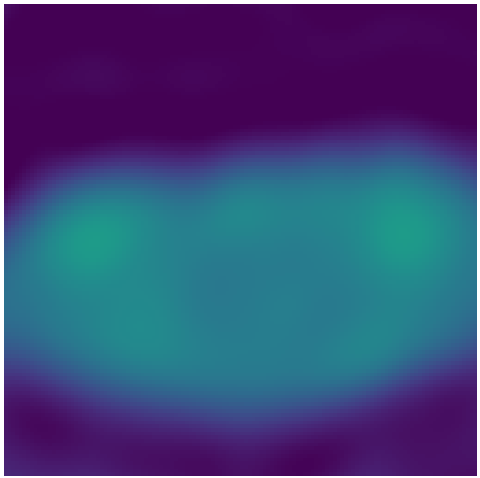} \\
        \multicolumn{7}{c}{\textit{(a) Electrical Impedance Tomography}} \\[8pt] 
        
        \includegraphics[width=0.138\textwidth]{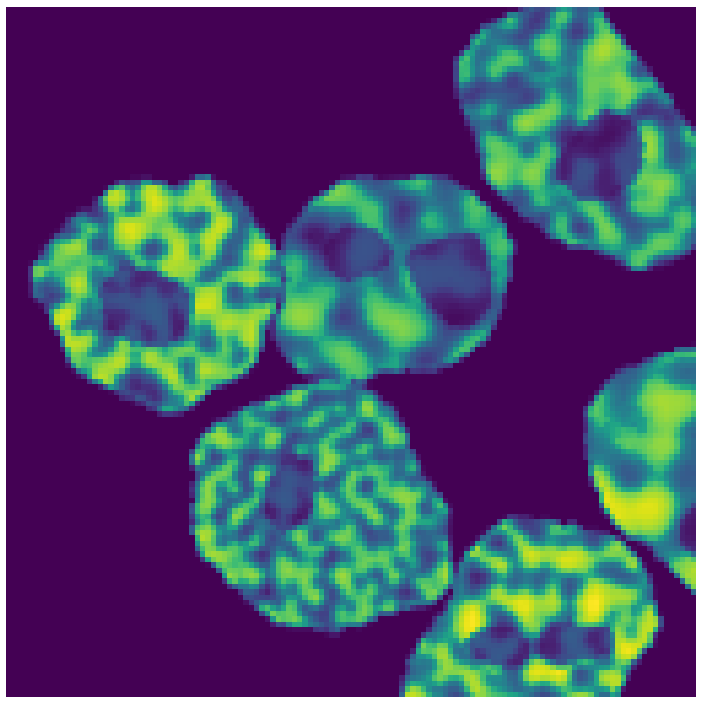} &
        \includegraphics[width=0.138\textwidth]{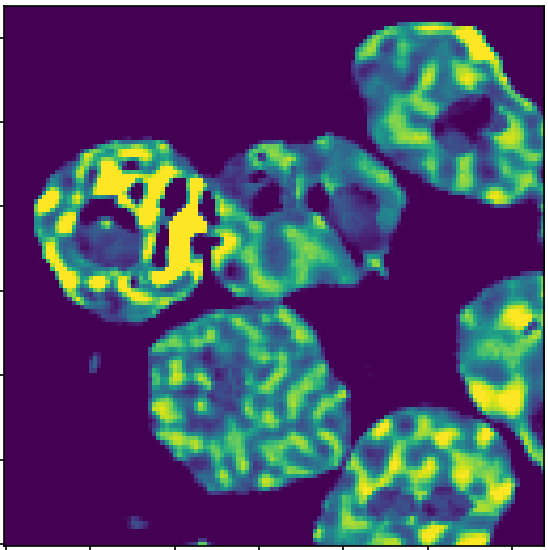} &
        \includegraphics[width=0.138\textwidth]{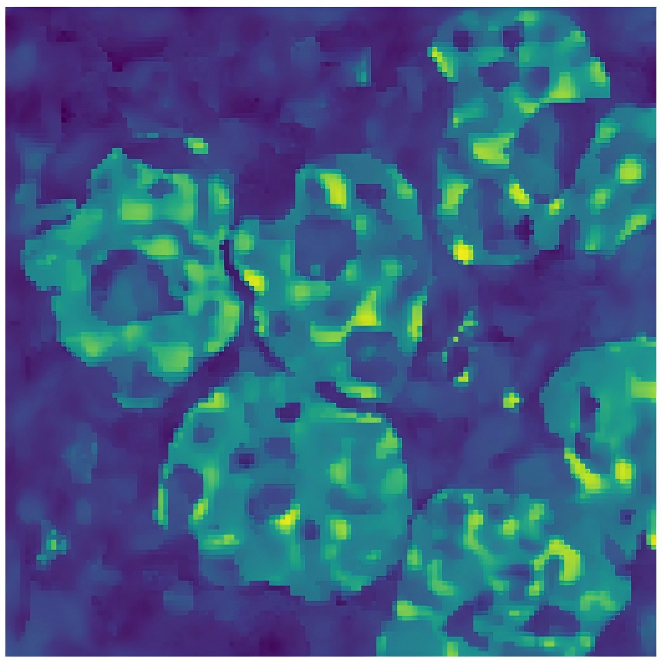} &
        \includegraphics[width=0.138\textwidth]{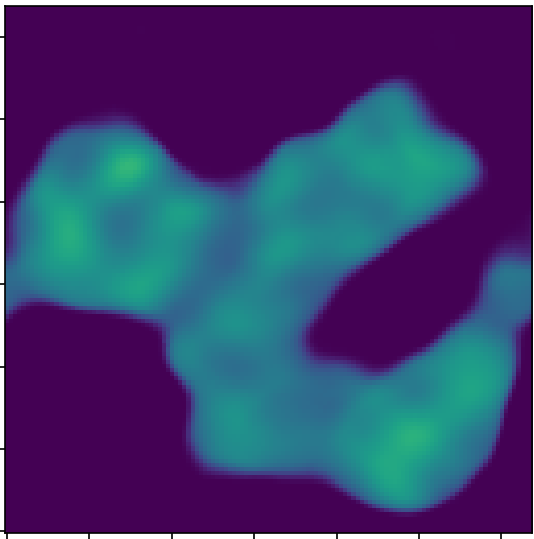} &
        \includegraphics[width=0.138\textwidth]{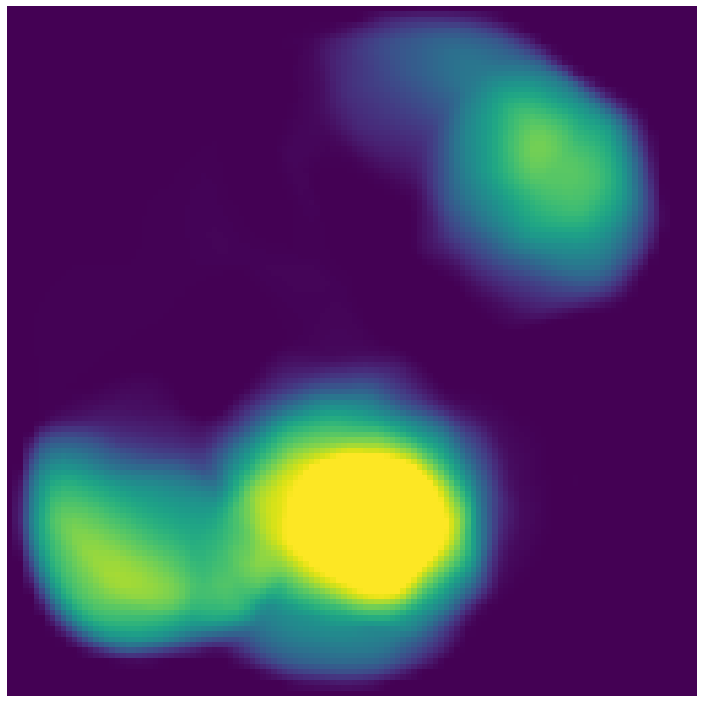} &
        \includegraphics[width=0.138\textwidth]{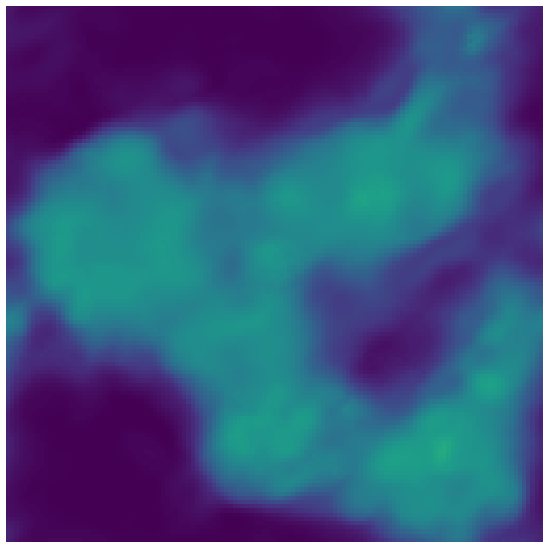} &
        \includegraphics[width=0.138\textwidth]{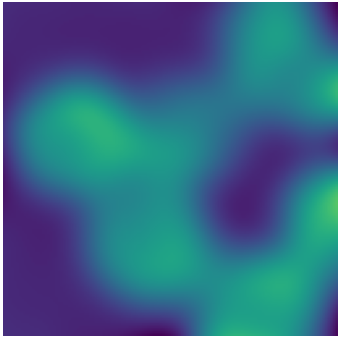} \\
        \multicolumn{7}{c}{\textit{(b) Inverse Scattering}} \\[8pt] 
        
        \includegraphics[width=0.138\textwidth]{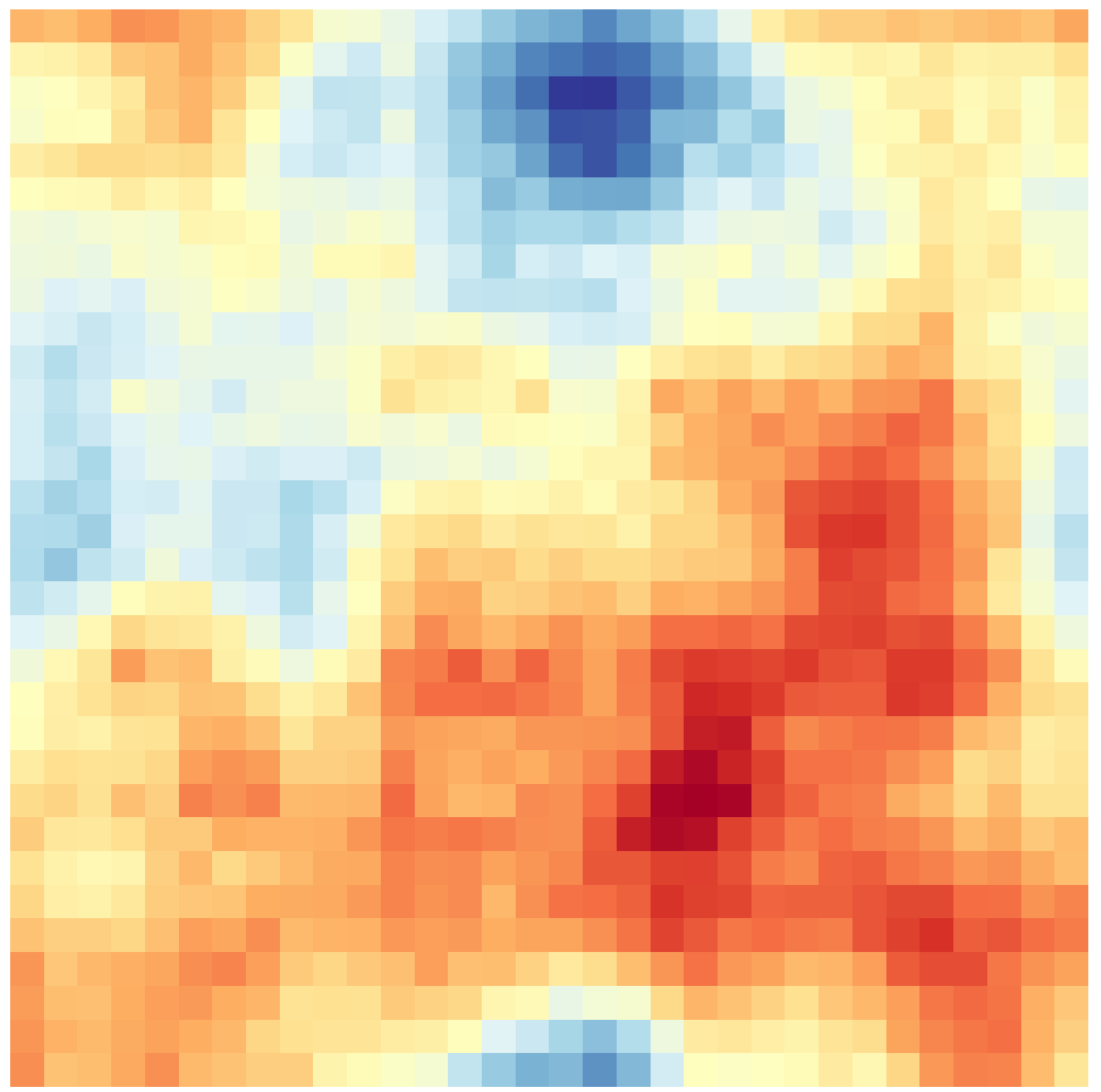} &
        \includegraphics[width=0.138\textwidth]{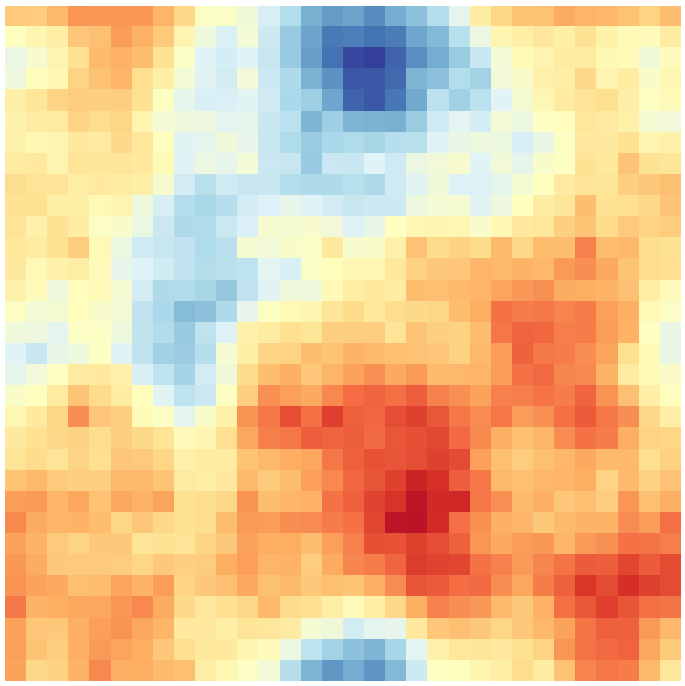} &
        \includegraphics[width=0.138\textwidth]{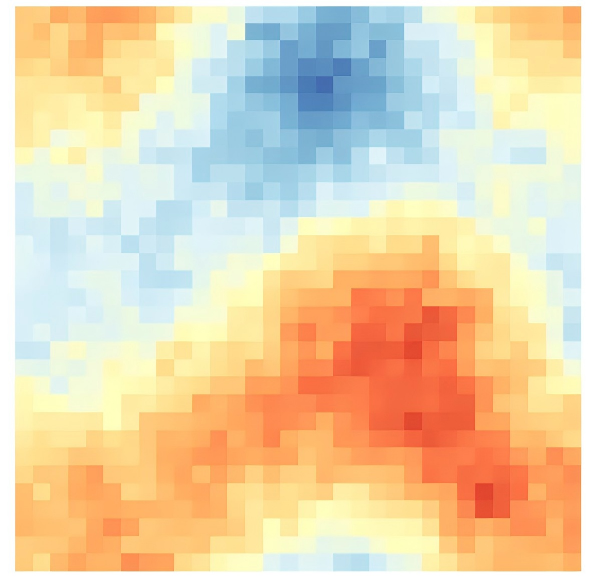} &
        \includegraphics[width=0.138\textwidth]{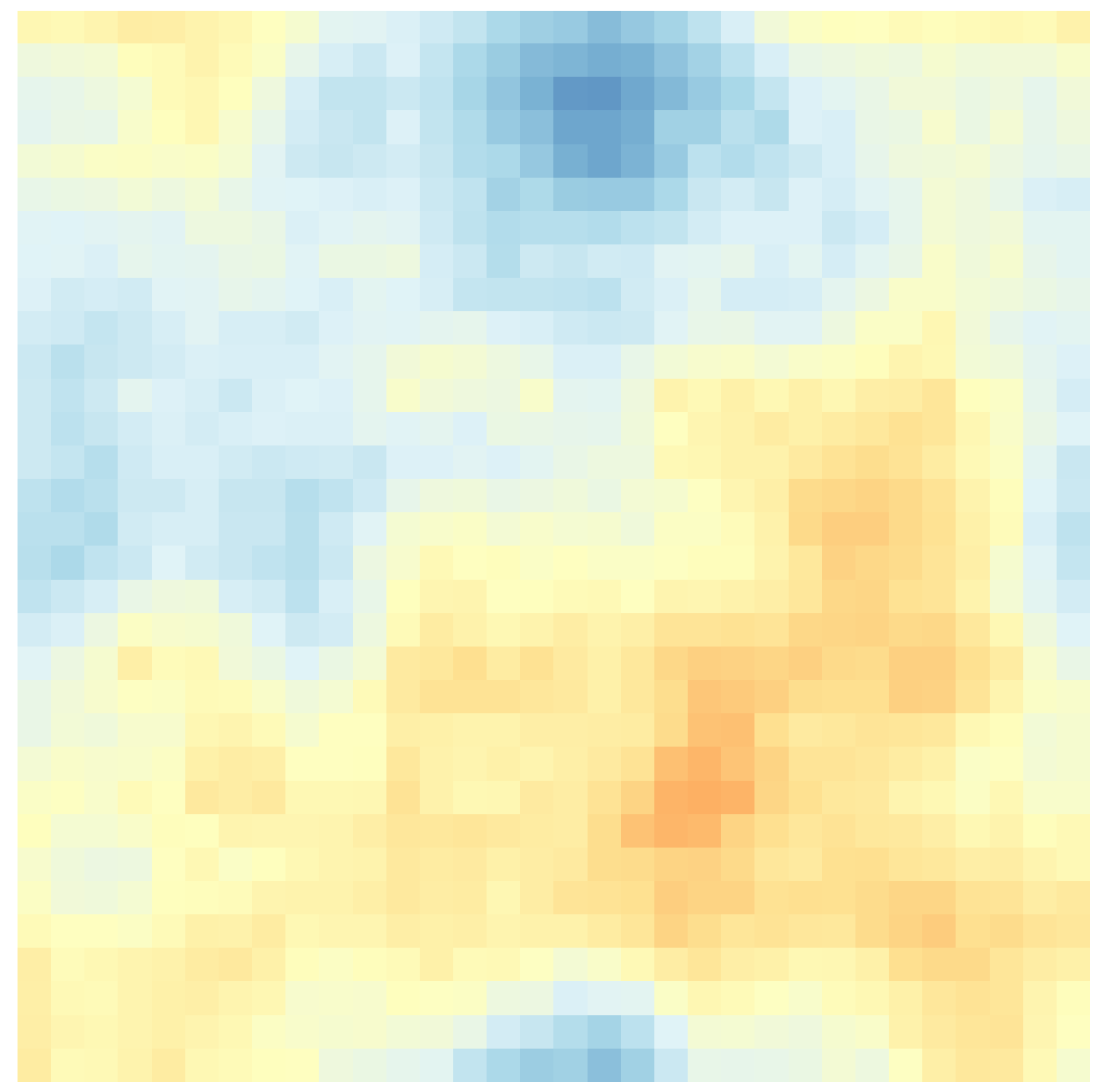} &
        \includegraphics[width=0.138\textwidth]{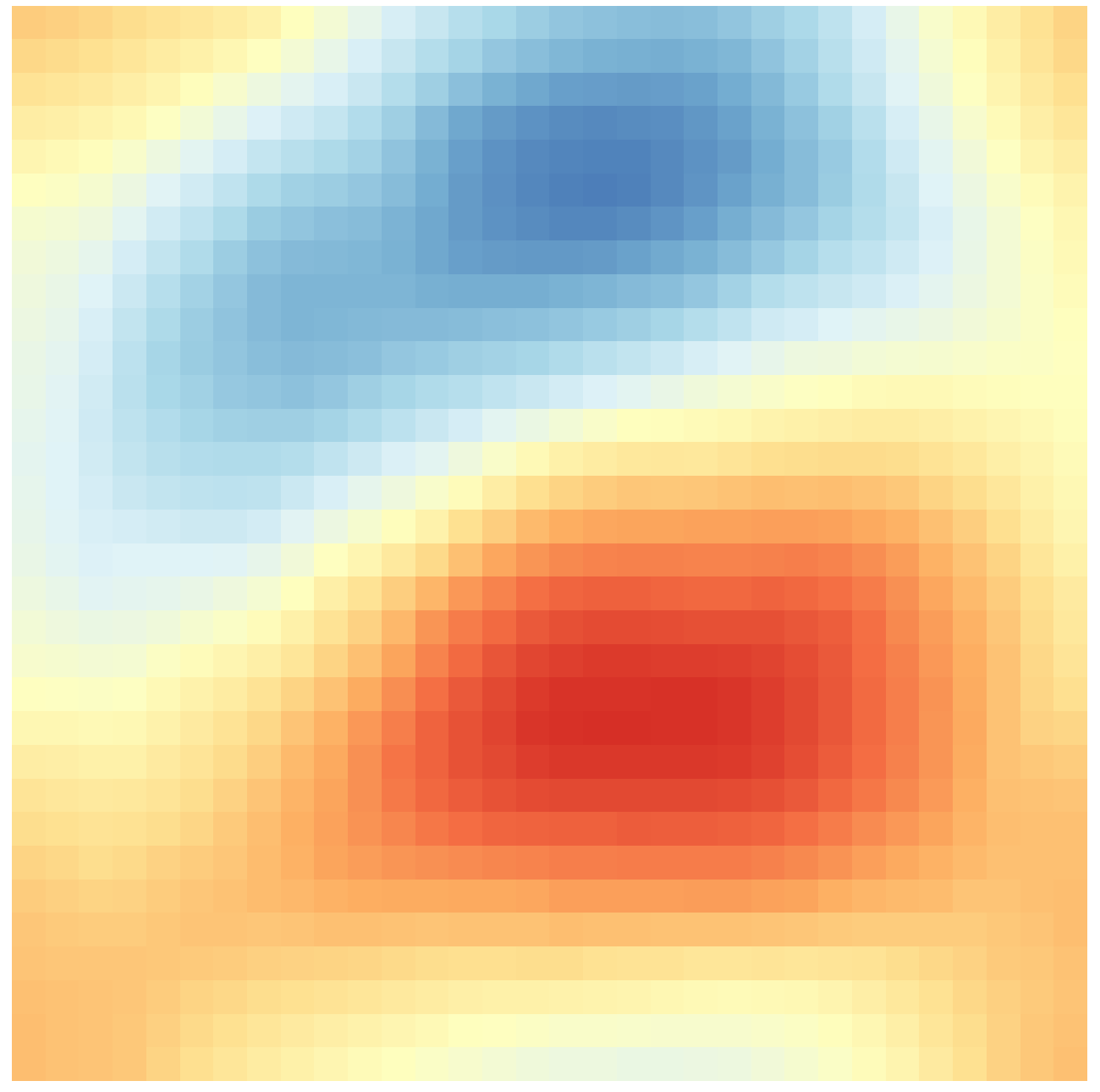} &
        \includegraphics[width=0.138\textwidth]{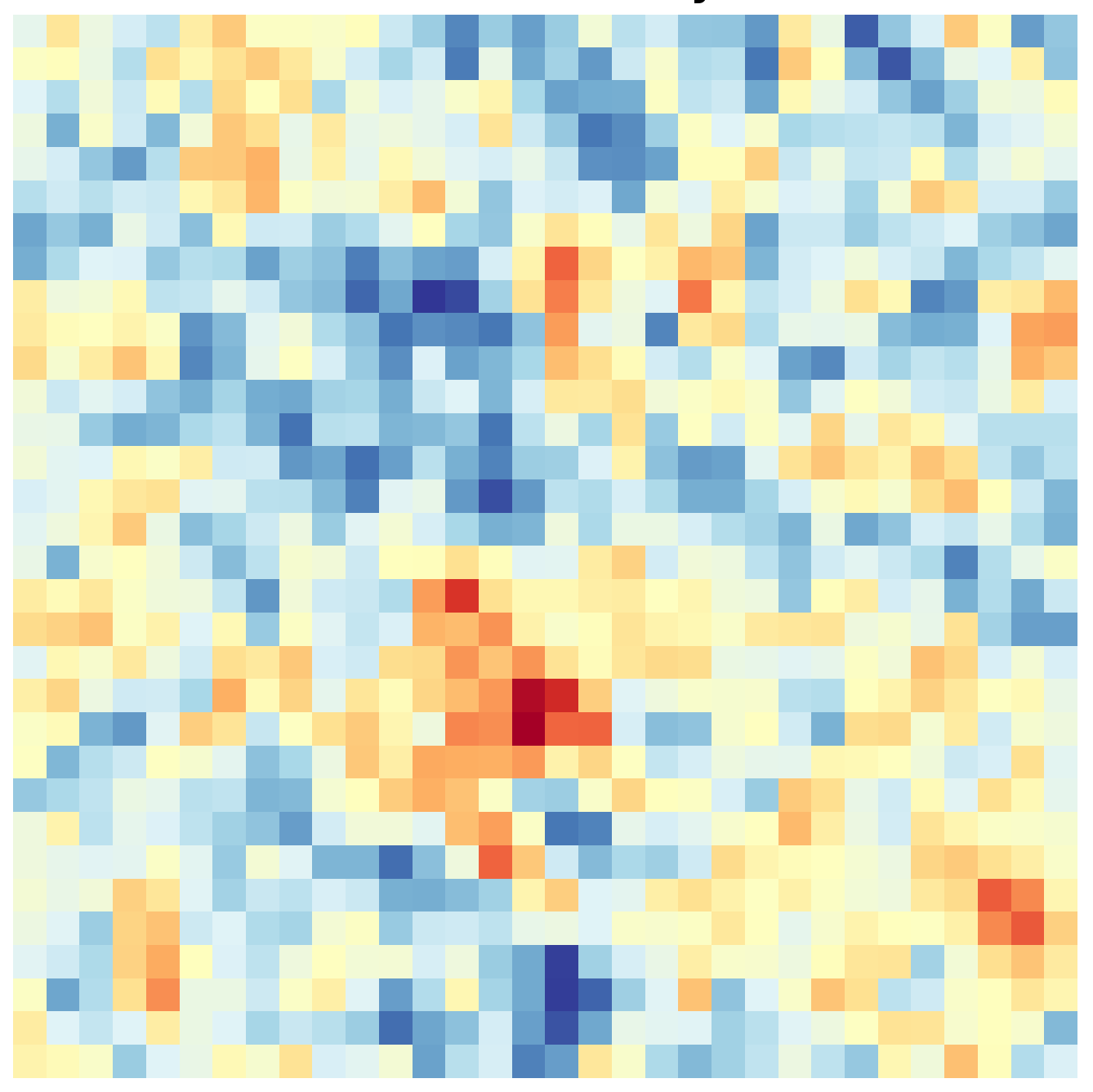} &
        \includegraphics[width=0.138\textwidth]{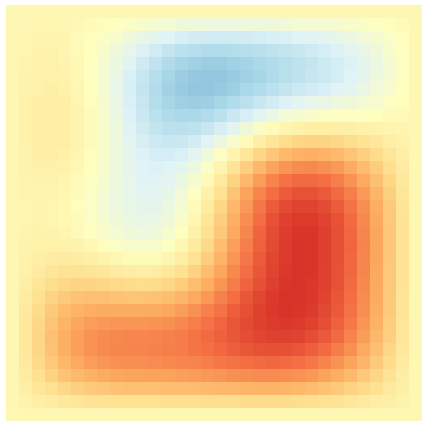} \\
        \multicolumn{7}{c}{\textit{(c) Inverse Navier-Stokes}} \\ 
    \end{tabular}
    
    \caption{Qualitative comparison of reconstruction robustness under $50\%$ Gaussian noise.}
    \label{fig:noise_comparison}
\end{figure}

To rigorously evaluate the robustness of our framework, we synthesize the noisy observation $y_{\text{noisy}}$ by perturbing the clean physical field $y_{\text{clean}}$ with zero-mean Gaussian noise, scaled relative to the standard deviation of the original signal:
\begin{equation} \label{eq:noise_injection}
    y_{\text{noisy}} = y_{\text{clean}} + \gamma \cdot \text{std}(y_{\text{clean}}) \cdot \epsilon,
\end{equation}
where $\gamma = 0.50$ controls the relative noise level ($50\%$), $\text{std}(\cdot)$ denotes the standard deviation computed over the spatial field, and $\epsilon \sim \mathcal{N}(0, \mathbf{I})$ is a standard normal distribution matrix of the same dimension. The qualitative impact of this $50\%$ Gaussian noise is further visualized in Figure~\ref{fig:noise_comparison}. Our method effectively suppresses the corruption by strictly restricting the solution space to the learned latent manifold.

\begin{table}[!htbp]
    \centering
    \caption{Quantitative comparison of reconstruction accuracy (MAE $\downarrow$) across EIT, Inverse Scattering, and Inverse N-S problems under standard (Clean) and $50\%$ Gaussian noise conditions.}
    \label{tab:combined_results}
    \small
    \setlength{\tabcolsep}{4pt}
    \begin{tabular}{lcccccc}
        \toprule
        \multirow{2}{*}{\textbf{Method}} & \multicolumn{2}{c}{\textbf{EIT}} & \multicolumn{2}{c}{\textbf{Inverse Scattering}} & \multicolumn{2}{c}{\textbf{Inverse N-S}} \\
        \cmidrule(lr){2-3} \cmidrule(lr){4-5} \cmidrule(lr){6-7}
        & \textbf{Clean} & \textbf{50\% Noise} & \textbf{Clean} & \textbf{50\% Noise} & \textbf{Clean} & \textbf{50\% Noise} \\ 
        \midrule
        DeepONet     & 0.0709 & 0.1041 & 0.8539 & 0.8994 & 0.0372 & 0.1083 \\
        FNO          & 0.0613 & 0.0884 & 0.5830 & 0.6130 & 0.0438 & 0.1275 \\
        PINNs        & 0.0379 & 0.0546 & 0.1334 & 0.1403 & 0.0281 & 0.0816 \\
        PINO         & 0.0758 & 0.1103 & 0.6083 & 0.6396 & 0.0775 & 0.2264 \\
        DiffusionPDE & 0.1218 & 0.1781 & 0.7213 & 0.7590 & 0.0486 & 0.1411 \\
        \textbf{Ours}& \textbf{0.0128} & \textbf{0.0190} & \textbf{0.0611} & \textbf{0.0645} & \textbf{0.0061} & \textbf{0.0179} \\ 
        \bottomrule
    \end{tabular}
\end{table}

The quantitative performance across the three tasks, under both standard (clean) conditions and an extreme $50\%$ Gaussian noise regime, is summarized in Table~\ref{tab:combined_results}. Our method consistently outperforms both classical and neural operator-based baselines. To evaluate the robustness of our framework, we added 50\% Gaussian noise to the observation data. High-intensity measurement noise often causes traditional and pure neural operator-based inversion methods to fail or suffer from severe artifacts due to the extreme ill-posedness of the problem. However, as demonstrated in the noise evaluation columns, our framework maintains accurate reconstructions.

\subsection{More Experiment Results}
To further demonstrate the generalization and numerical stability of the DiLO framework, we evaluate a diverse array of experimental configurations. The following results across multiple physical domains substantiate the algorithm's robust performance and high-fidelity reconstruction capabilities.

\paragraph{EIT.} 
To comprehensively evaluate the reconstruction performance, Figures \ref{fig:eit2_combined} to \ref{fig:eit6_combined} share a consistent layout: subfigures (a) and (b) display the predicted reconstruction result and its absolute error map at the initial state (Iteration 0); (c) and (d) show the finalized reconstruction result and the minimized error map at the final iteration; (e) represents the ground truth (GT) distribution; and (f) plots the log-scale loss curve demonstrating model convergence.

\begin{figure}[!htbp]
    \centering
    \begin{minipage}{\textwidth}
        \centering
        \begin{minipage}[c]{0.17\linewidth}
            \centering
            \subfigure[]{\includegraphics[width=0.75\linewidth]{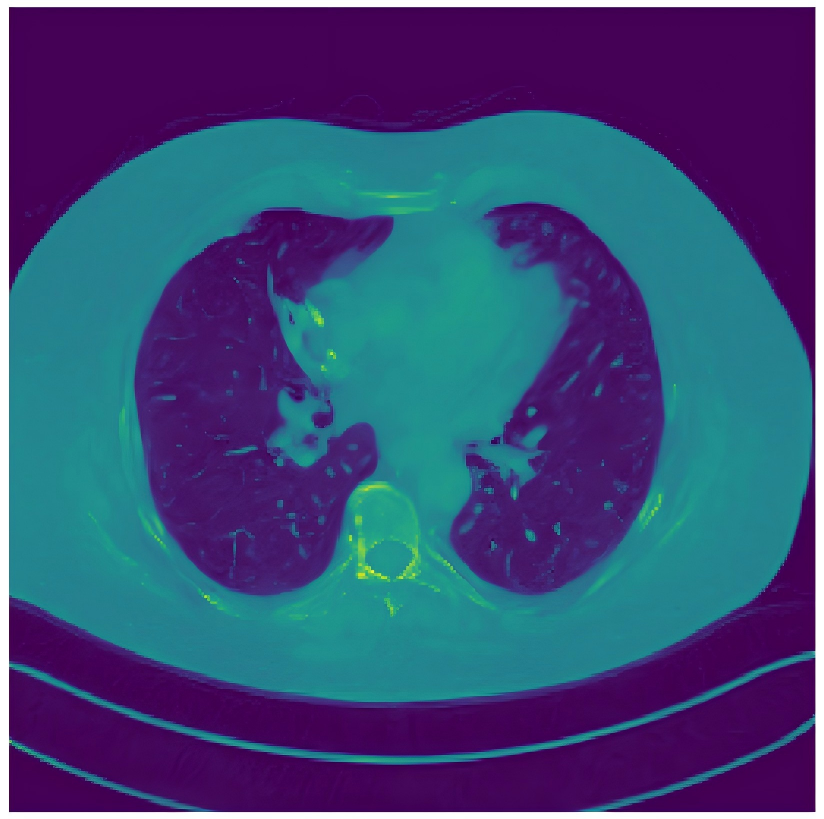}} \\
            \vspace{-0.2cm}
            \subfigure[]{\includegraphics[width=0.75\linewidth]{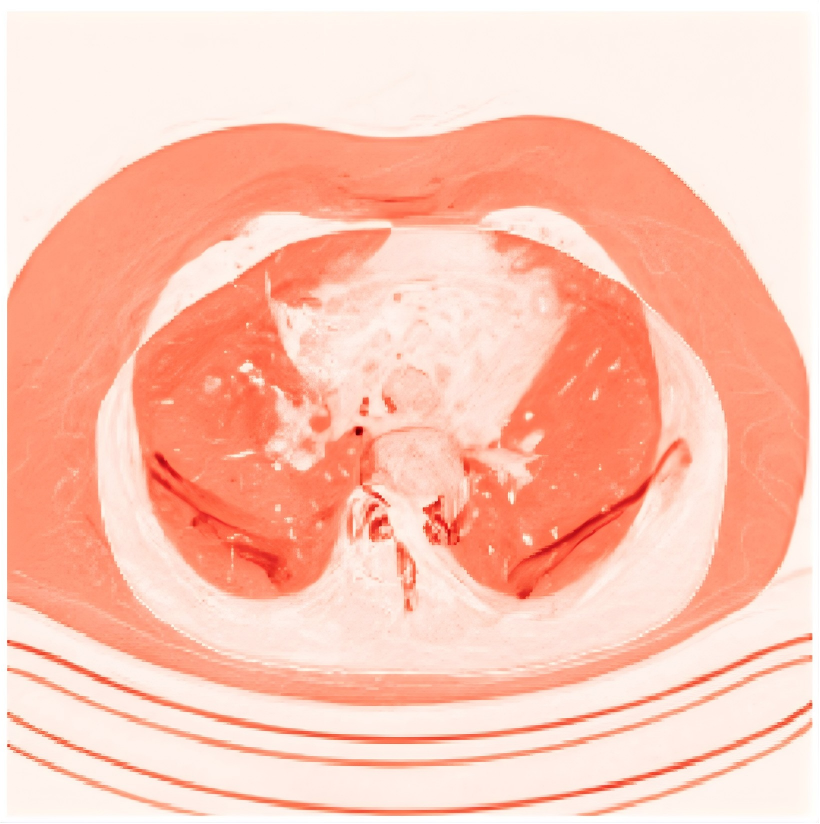}}
        \end{minipage}
        \begin{minipage}[c]{0.17\linewidth}
            \centering
            \subfigure[]{\includegraphics[width=0.75\linewidth]{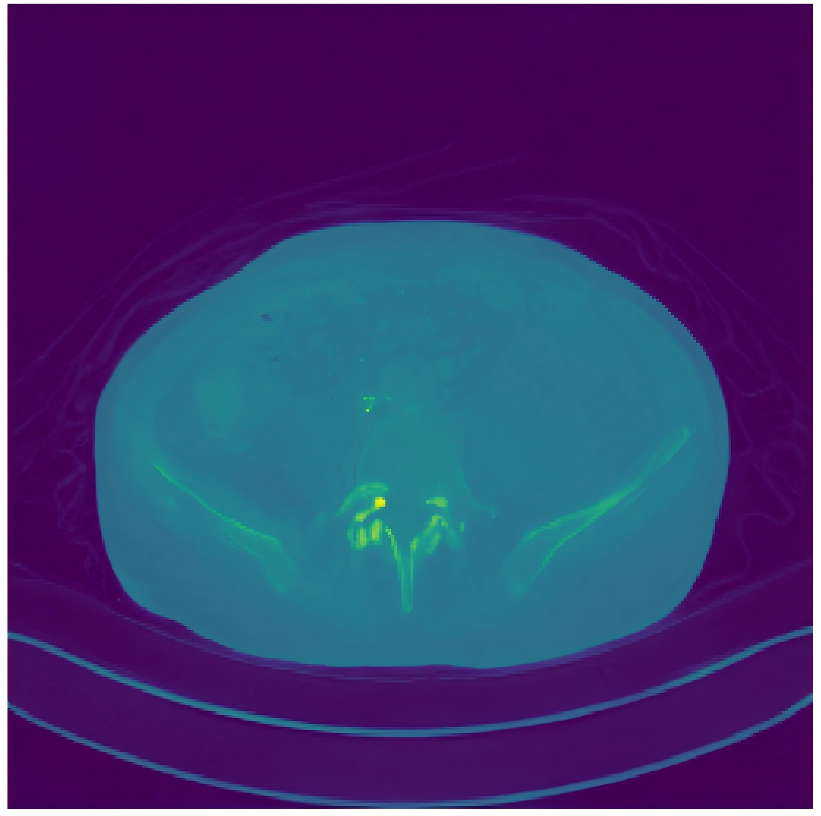}} \\
            \vspace{-0.2cm}
            \subfigure[]{\includegraphics[width=0.75\linewidth]{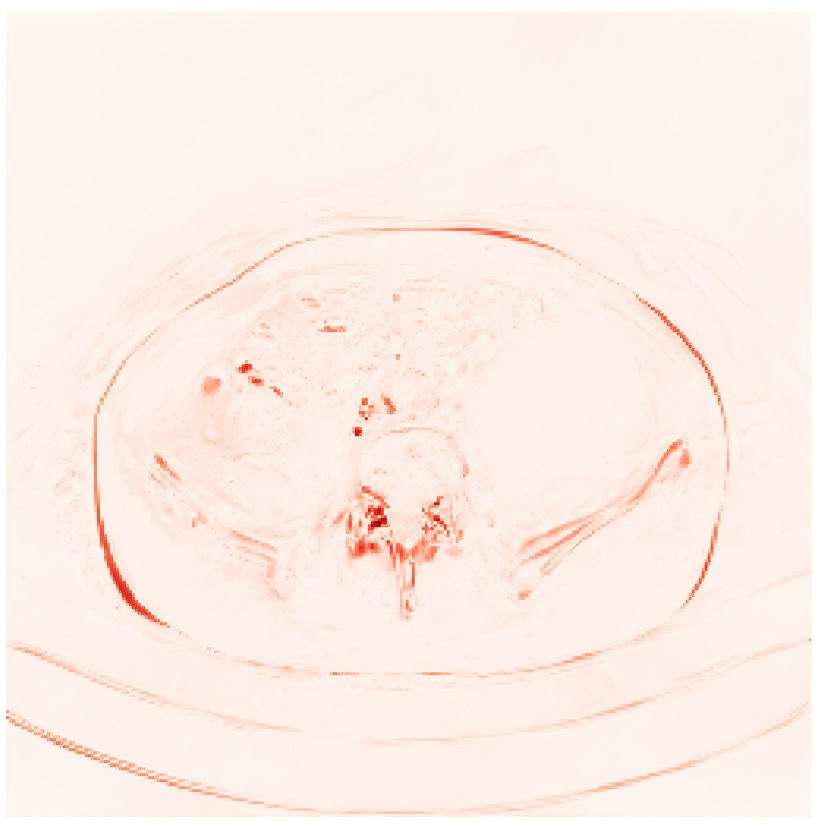}}
        \end{minipage}
        \begin{minipage}[c]{0.198\linewidth}
            \centering
            \subfigure[GT]{\includegraphics[width=0.75\linewidth]{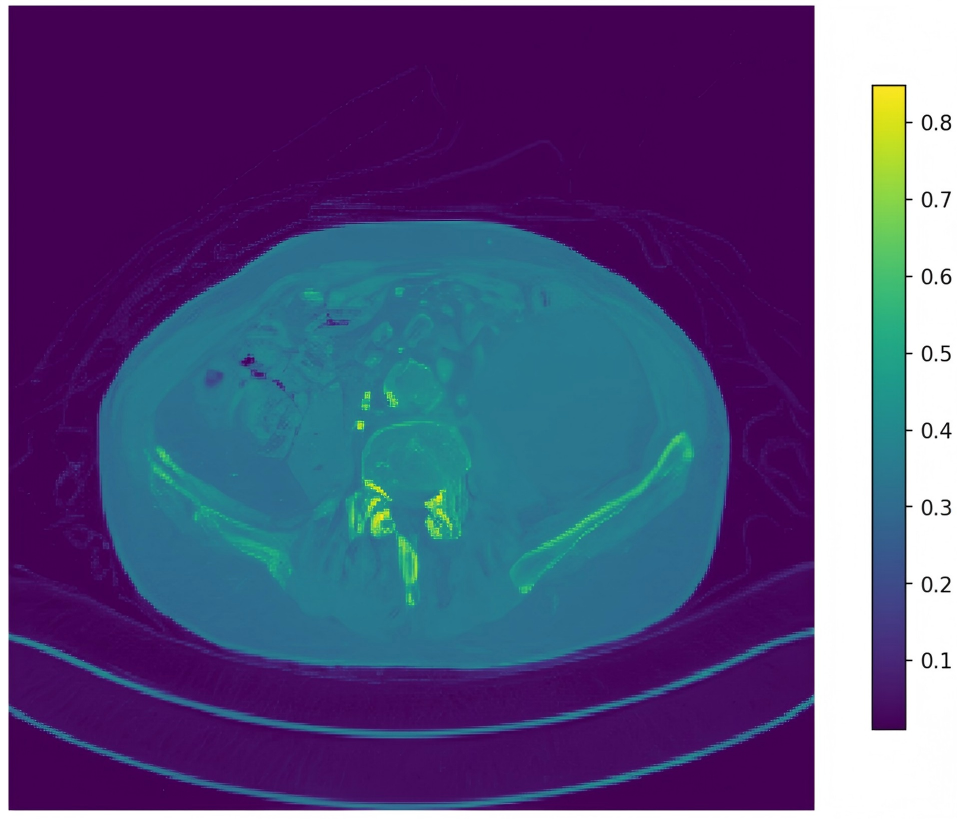}} \\
            \vspace{2.9cm}
        \end{minipage}
        \hfill
        \begin{minipage}[c]{0.38\linewidth}
            \centering
            \subfigure[]{\includegraphics[width=\linewidth]{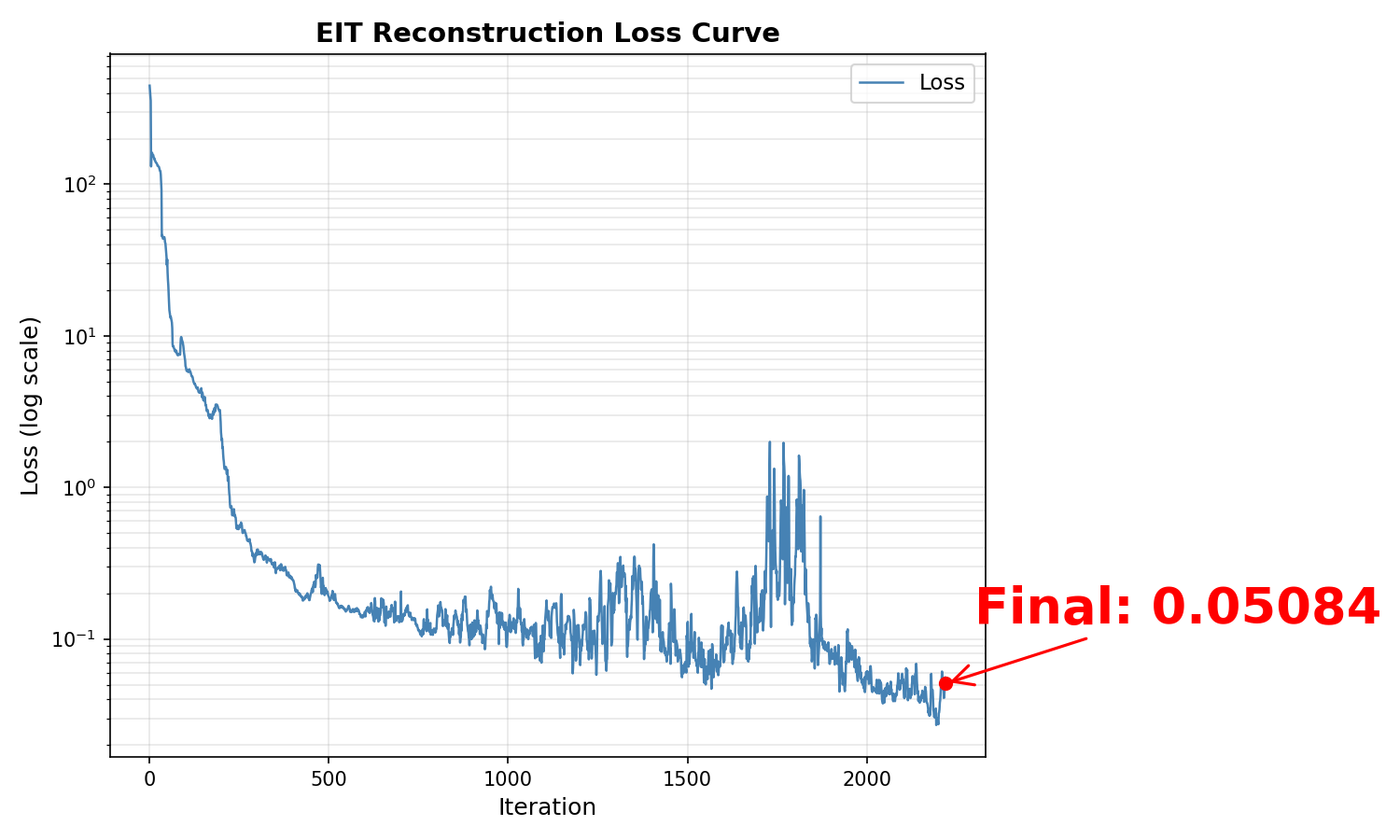}}
        \end{minipage}
    \end{minipage}
    \caption{Reconstruction analysis for a GT characterized by prominent, large-scale internal structures, demonstrating rapid boundary localization and sharp initial loss reduction.}
    \label{fig:eit2_combined}
\end{figure}

\begin{figure}[!htbp]
    \centering
    \begin{minipage}{\textwidth}
        \centering
        \begin{minipage}[c]{0.17\linewidth}
            \centering
            \subfigure[]{\includegraphics[width=0.75\linewidth]{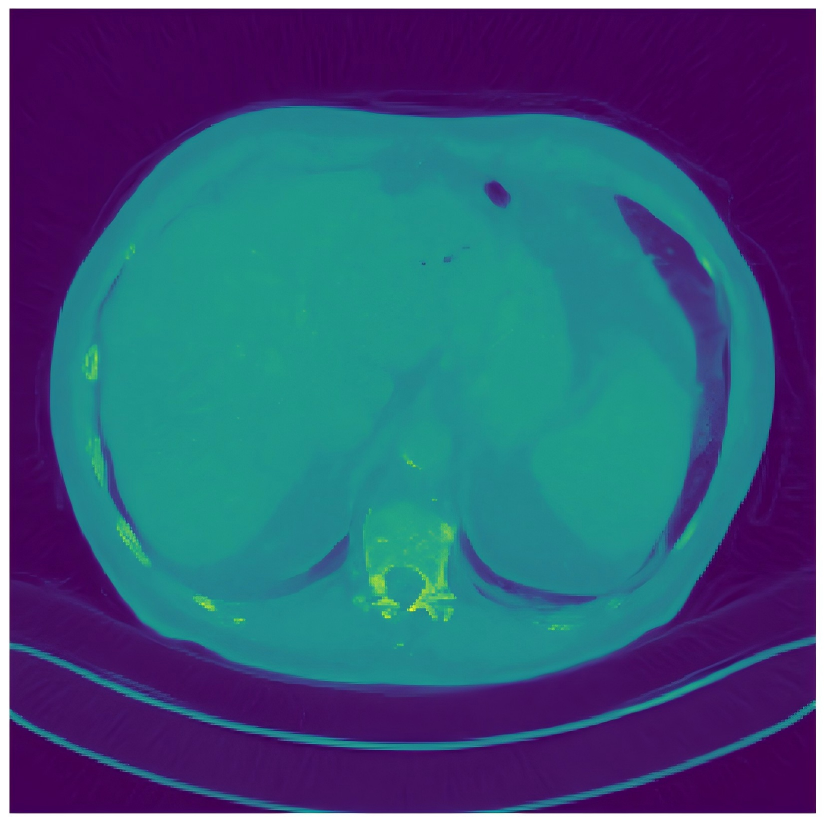}} \\
            \vspace{-0.2cm}
            \subfigure[]{\includegraphics[width=0.75\linewidth]{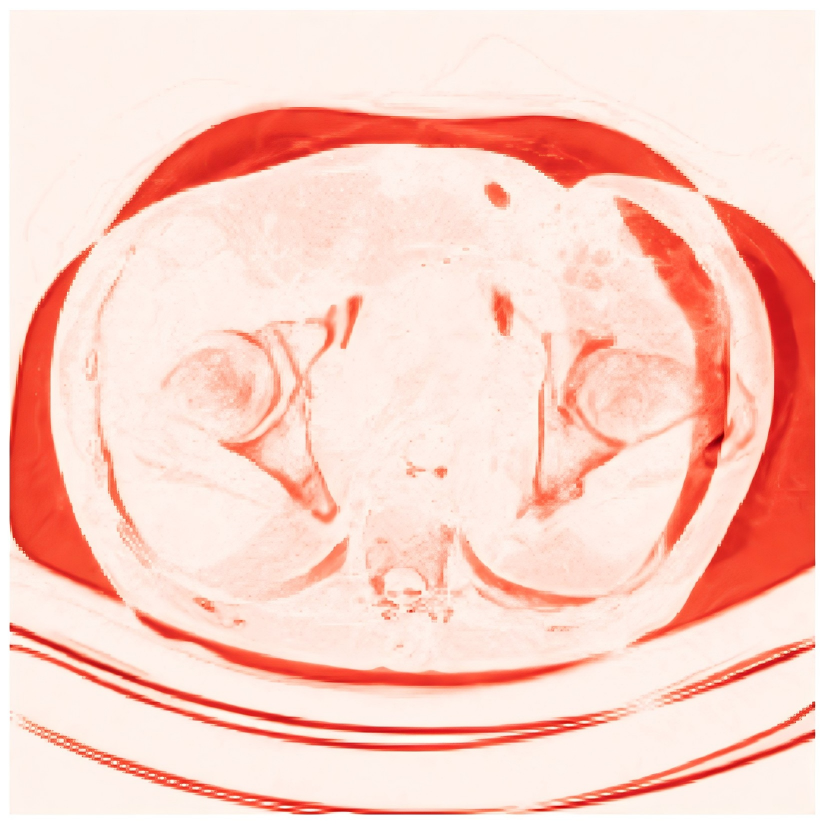}}
        \end{minipage}
        \begin{minipage}[c]{0.17\linewidth}
            \centering
            \subfigure[]{\includegraphics[width=0.75\linewidth]{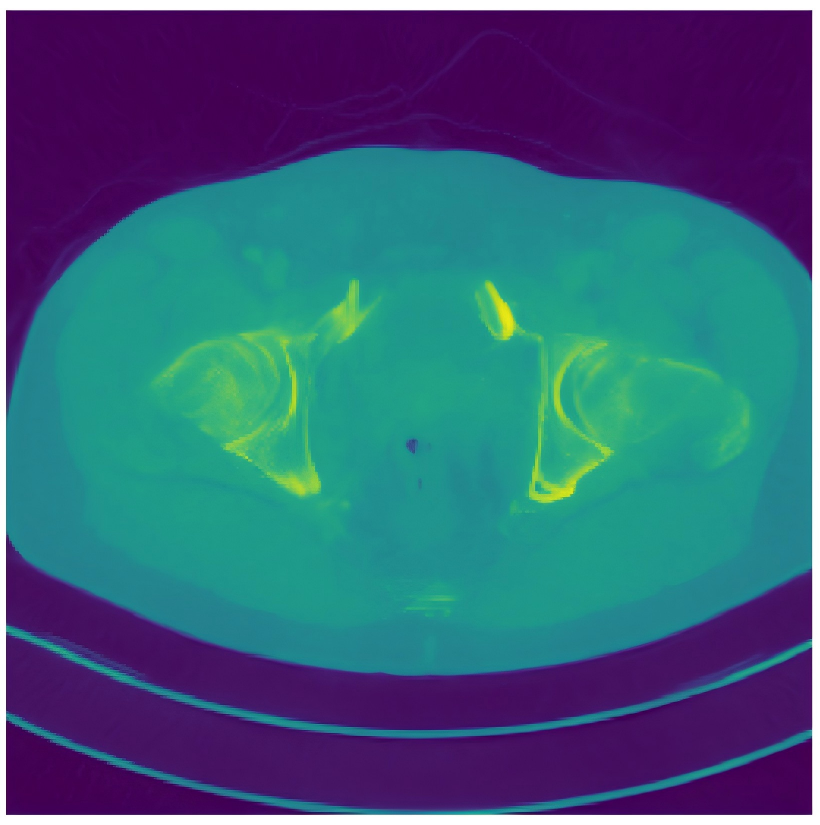}} \\
            \vspace{-0.2cm}
            \subfigure[]{\includegraphics[width=0.75\linewidth]{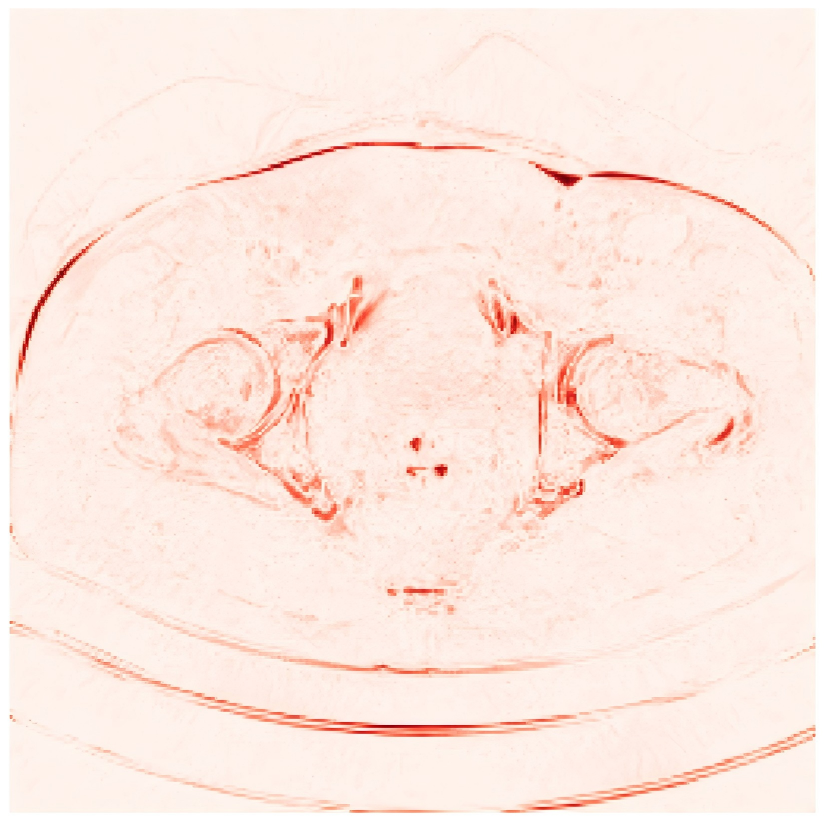}}
        \end{minipage}
        \begin{minipage}[c]{0.198\linewidth}
            \centering
            \subfigure[GT]{\includegraphics[width=0.75\linewidth]{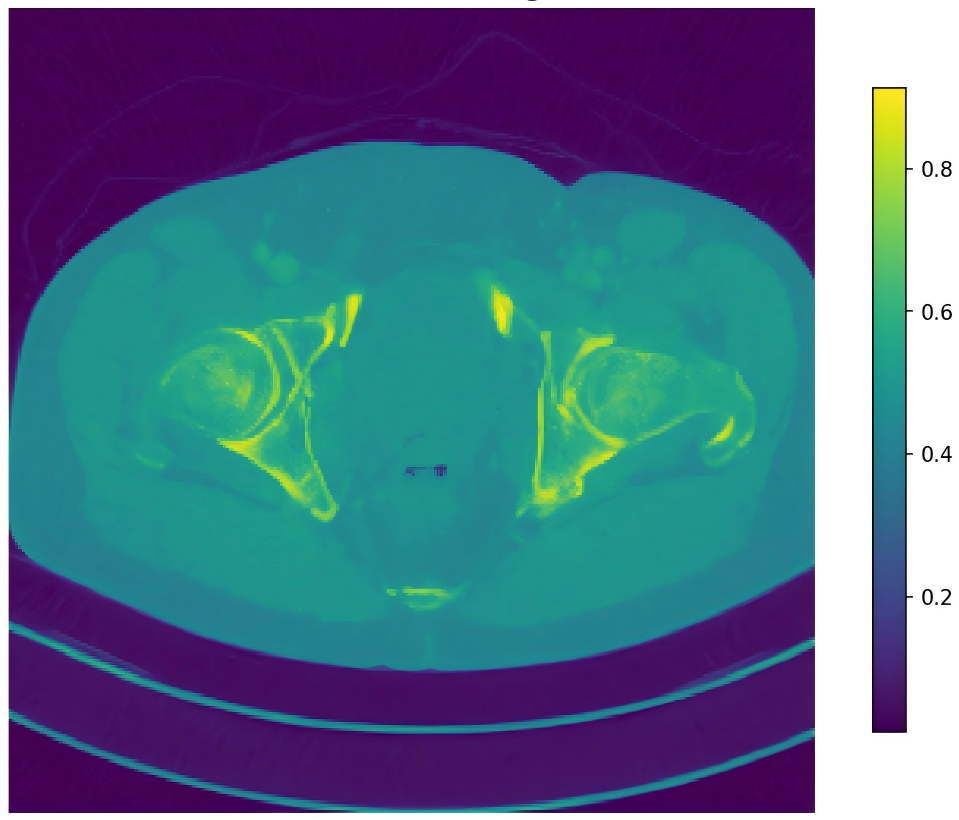}} \\
            \vspace{2.9cm}
        \end{minipage}
        \hfill
        \begin{minipage}[c]{0.38\linewidth}
            \centering
            \subfigure[]{\includegraphics[width=\linewidth]{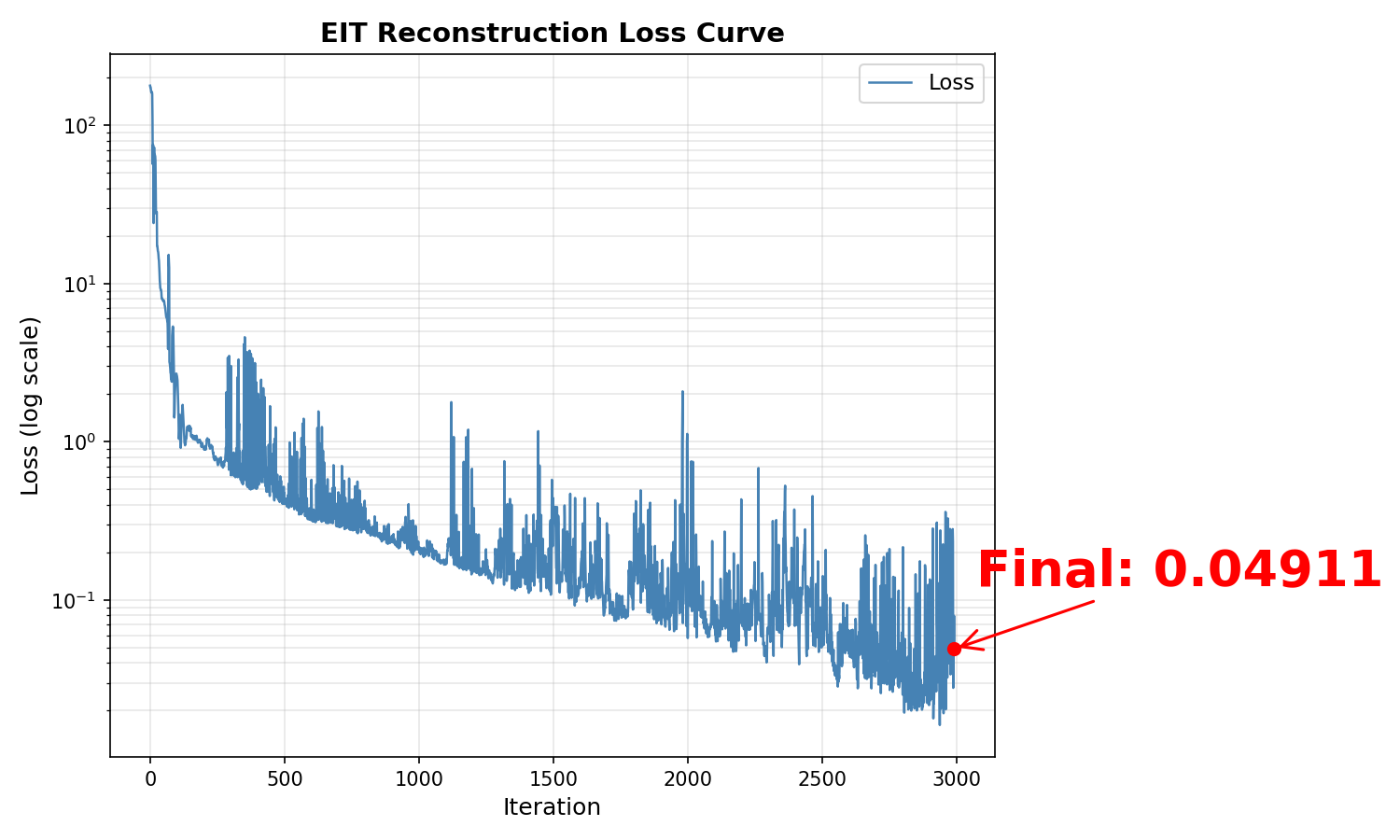}}
        \end{minipage}
    \end{minipage}
    \caption{Reconstruction analysis for a GT featuring multiple closely spaced, distinct anatomical targets, showing successful target separation without merging artifacts.}
    \label{fig:eit3_combined}
\end{figure}

\begin{figure}[!htbp]
    \centering
    \begin{minipage}{\textwidth}
        \centering
        \begin{minipage}[c]{0.17\linewidth}
            \centering
            \subfigure[]{\includegraphics[width=0.75\linewidth]{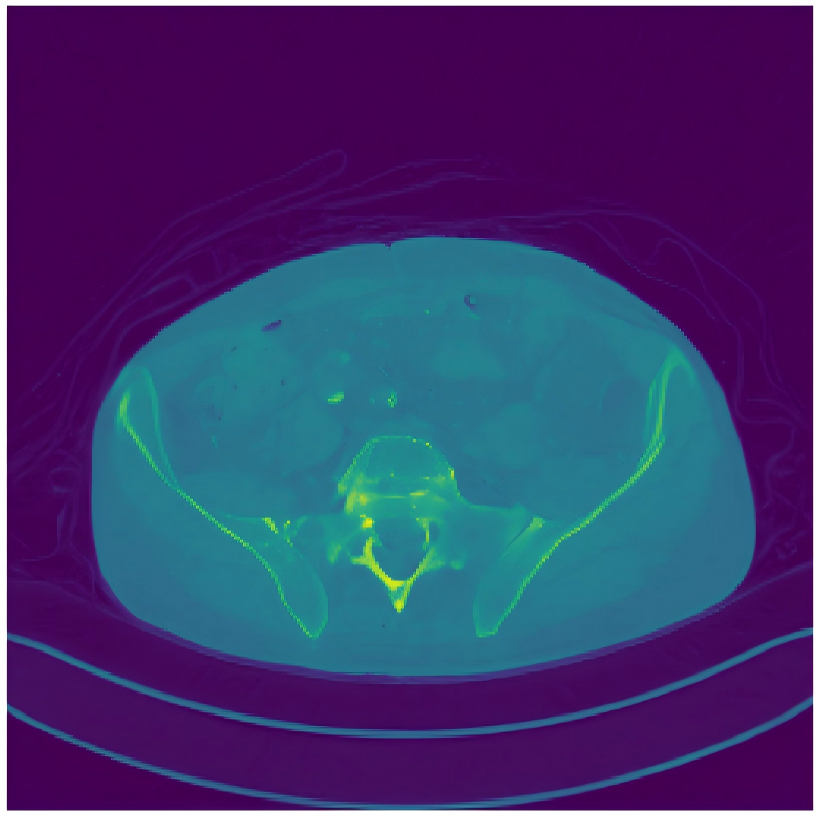}} \\
            \vspace{-0.2cm}
            \subfigure[]{\includegraphics[width=0.75\linewidth]{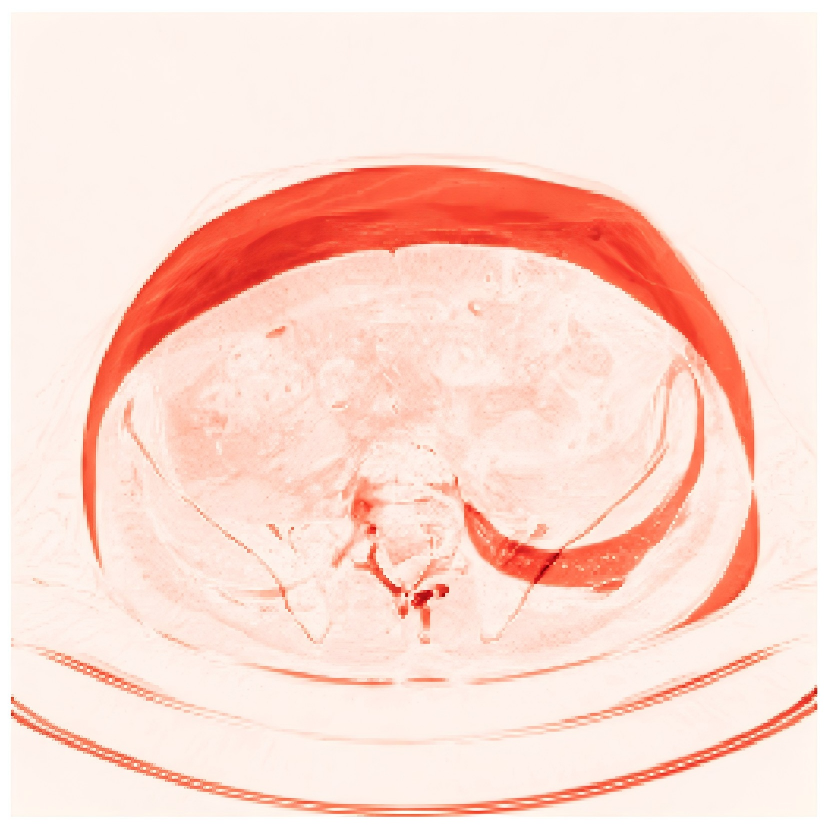}}
        \end{minipage}
        \begin{minipage}[c]{0.17\linewidth}
            \centering
            \subfigure[]{\includegraphics[width=0.75\linewidth]{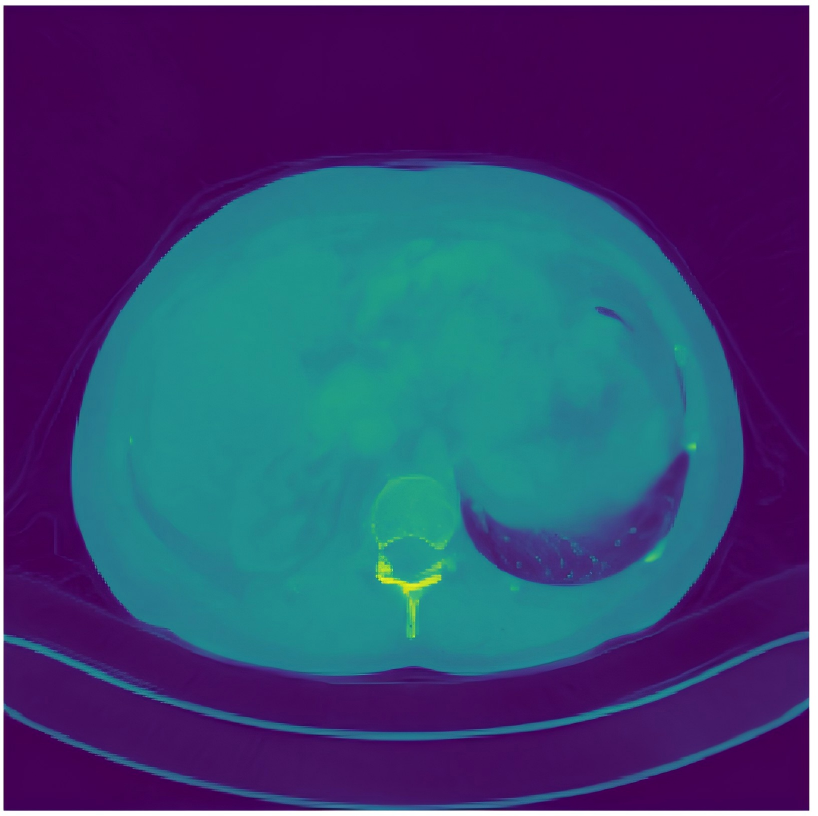}} \\
            \vspace{-0.2cm}
            \subfigure[]{\includegraphics[width=0.75\linewidth]{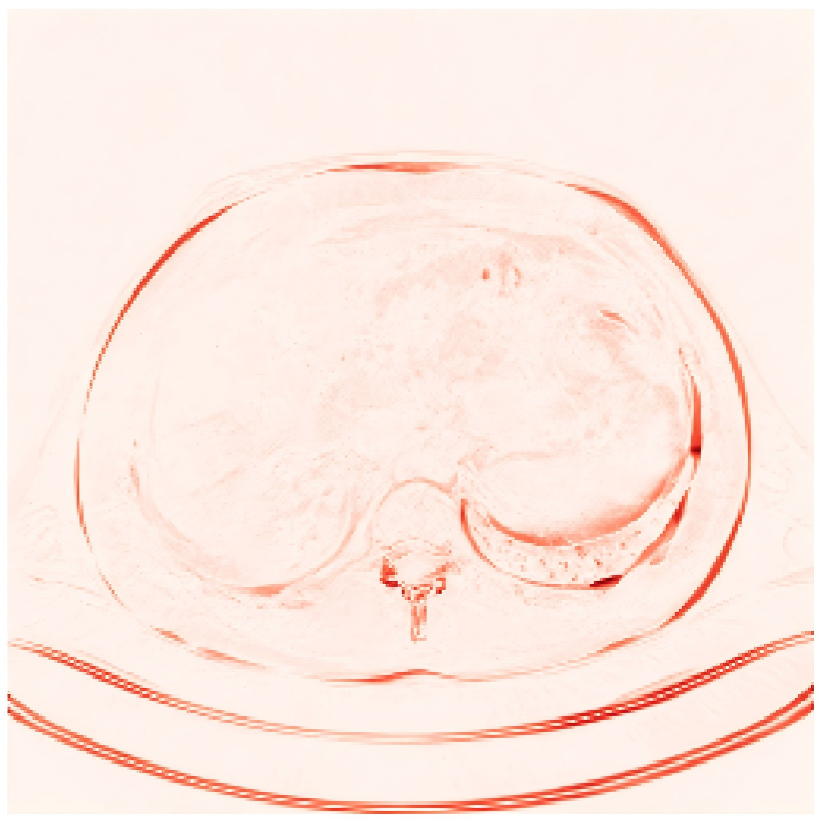}}
        \end{minipage}
        \begin{minipage}[c]{0.17\linewidth}
            \centering
            \subfigure[GT]{\includegraphics[width=0.75\linewidth]{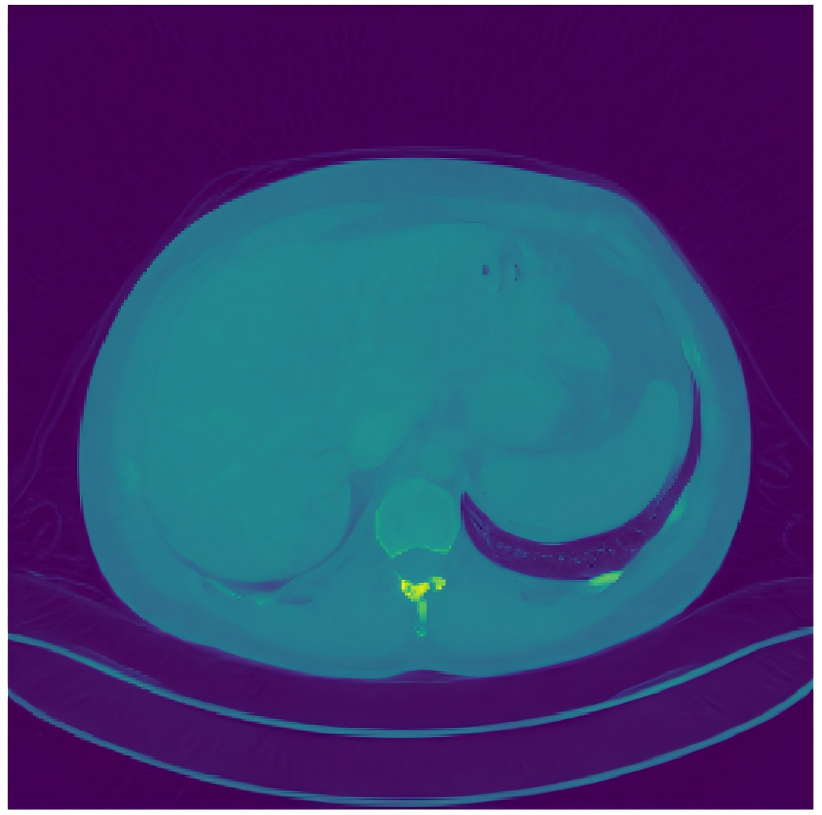}} \\
            \vspace{2.9cm}
        \end{minipage}
        \hfill
        \begin{minipage}[c]{0.38\linewidth}
            \centering
            \subfigure[]{\includegraphics[width=\linewidth]{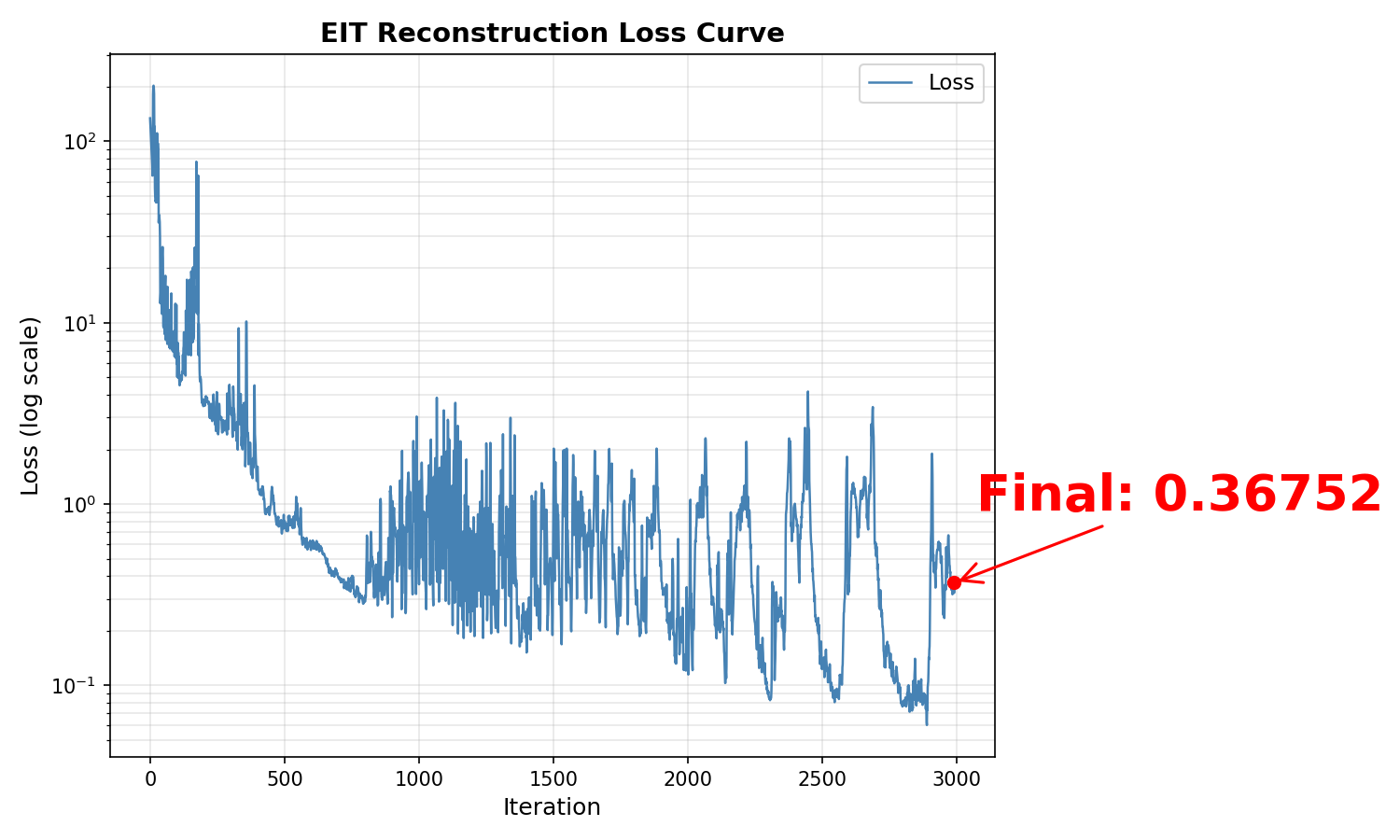}}
        \end{minipage}
    \end{minipage}
    \caption{Reconstruction analysis for an abdominal GT containing small, localized high-contrast targets within a large uniform background, highlighting the structural regularization provided by the diffusion prior.}
    \label{fig:eit4_combined}
\end{figure}

\begin{figure}[!htbp]
    \centering
    \begin{minipage}{\textwidth}
        \centering
        \begin{minipage}[c]{0.17\linewidth}
            \centering
            \subfigure[]{\includegraphics[width=0.75\linewidth]{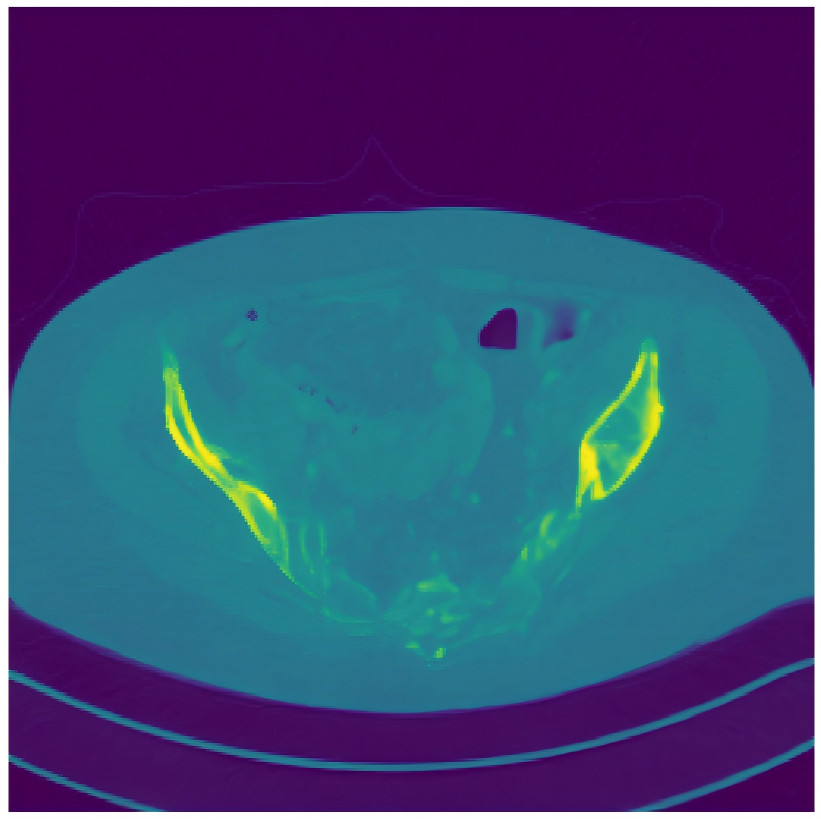}} \\
            \vspace{-0.2cm}
            \subfigure[]{\includegraphics[width=0.75\linewidth]{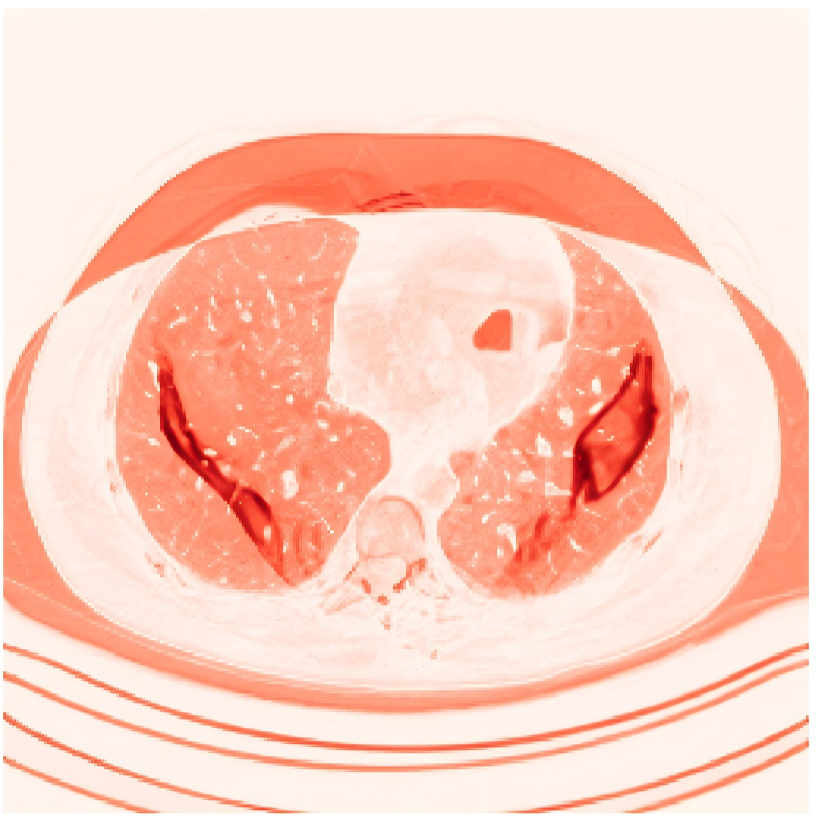}}
        \end{minipage}
        \begin{minipage}[c]{0.17\linewidth}
            \centering
            \subfigure[]{\includegraphics[width=0.75\linewidth]{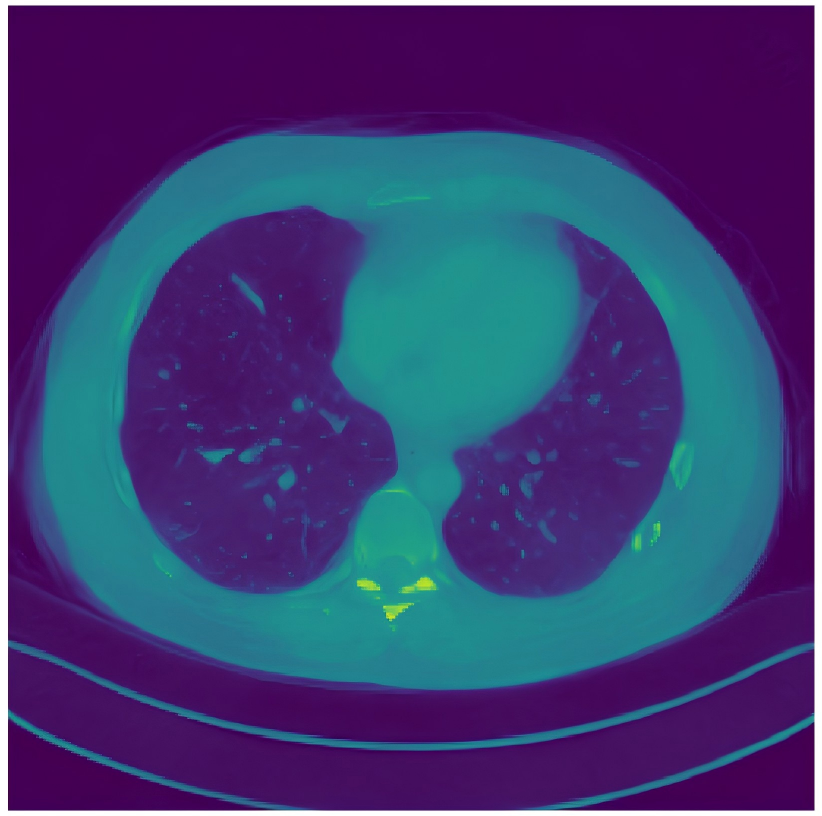}} \\
            \vspace{-0.2cm}
            \subfigure[]{\includegraphics[width=0.75\linewidth]{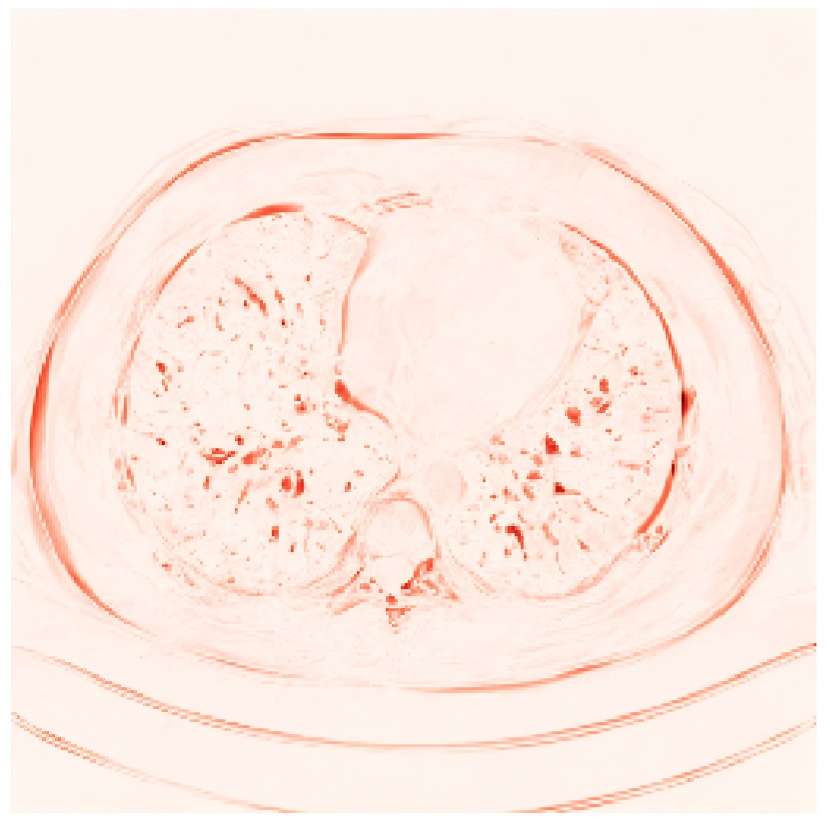}}
        \end{minipage}
        \begin{minipage}[c]{0.198\linewidth}
            \centering
            \subfigure[GT]{\includegraphics[width=0.75\linewidth]{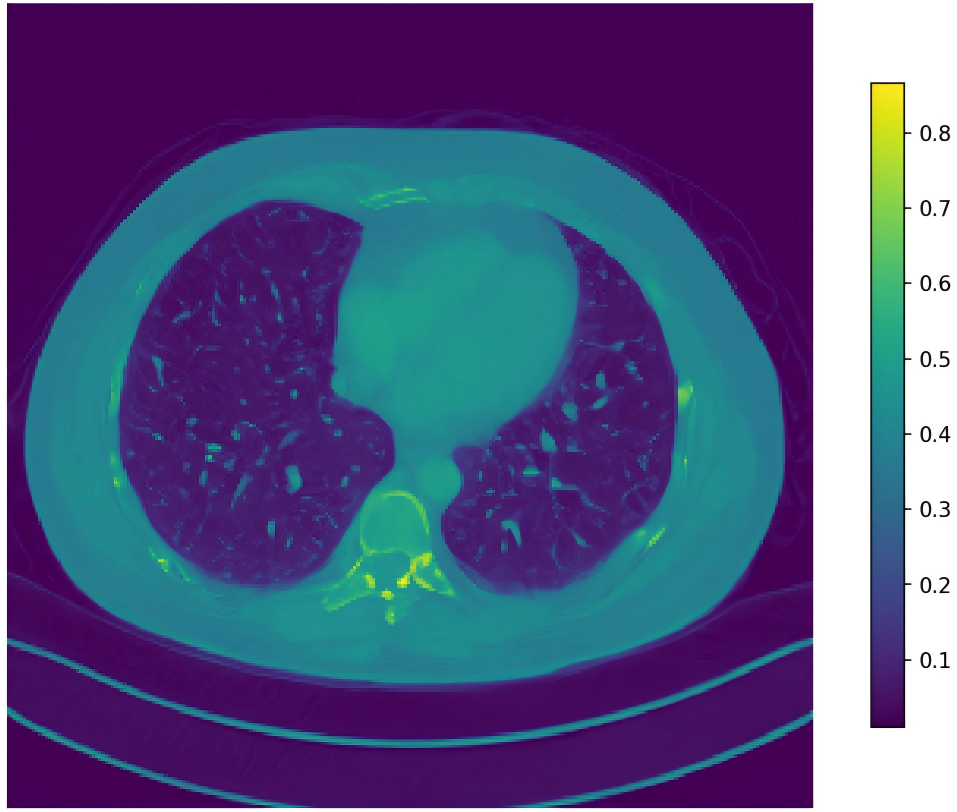}} \\
            \vspace{2.9cm}
        \end{minipage}
        \hfill
        \begin{minipage}[c]{0.38\linewidth}
            \centering
            \subfigure[]{\includegraphics[width=\linewidth]{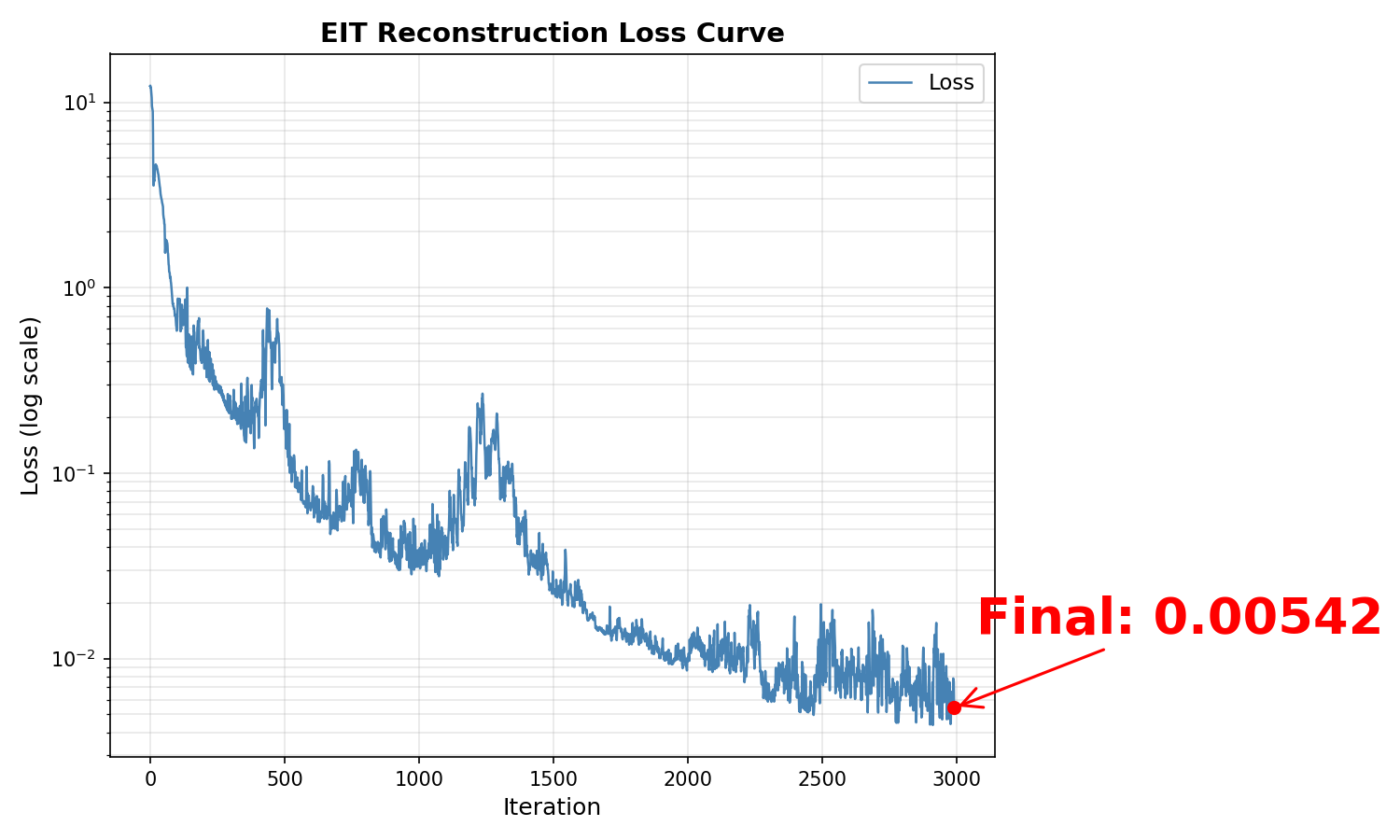}}
        \end{minipage}
    \end{minipage}
    \caption{Reconstruction analysis for a chest GT exhibiting large, low-conductivity lung fields contrasted with very small, sharp anomalies, demonstrating the algorithm's robustness to scale variations and its ability to preserve sharp boundaries.}
    \label{fig:eit6_combined}
\end{figure}

\paragraph{Inverse Scattering.}
To systematically evaluate the efficacy of DiLO in inverse scattering, Figures \ref{fig:is1_combined}--\ref{fig:is2_combined} adopt a unified layout: the left panel presents a $3 \times 4$ matrix displaying the predicted parameter $\eta$ alongside the real part, imaginary part, and magnitude of the scattered field $u$ at the initial state (top row), the final iteration (middle row), and the Ground Truth (bottom row). The right panel shows the corresponding real-time loss trajectory.

\begin{figure}[!htbp]
    \centering
    \begin{minipage}[c]{0.48\textwidth}
        \centering
        \subfigure{\includegraphics[width=\textwidth]{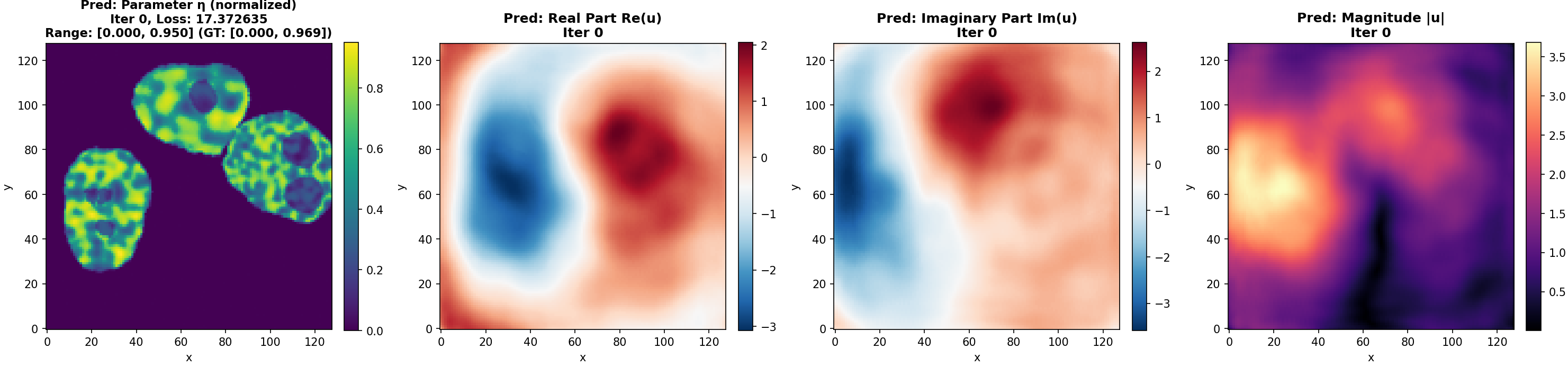}}
        \\[1ex]
        \subfigure{\includegraphics[width=\textwidth]{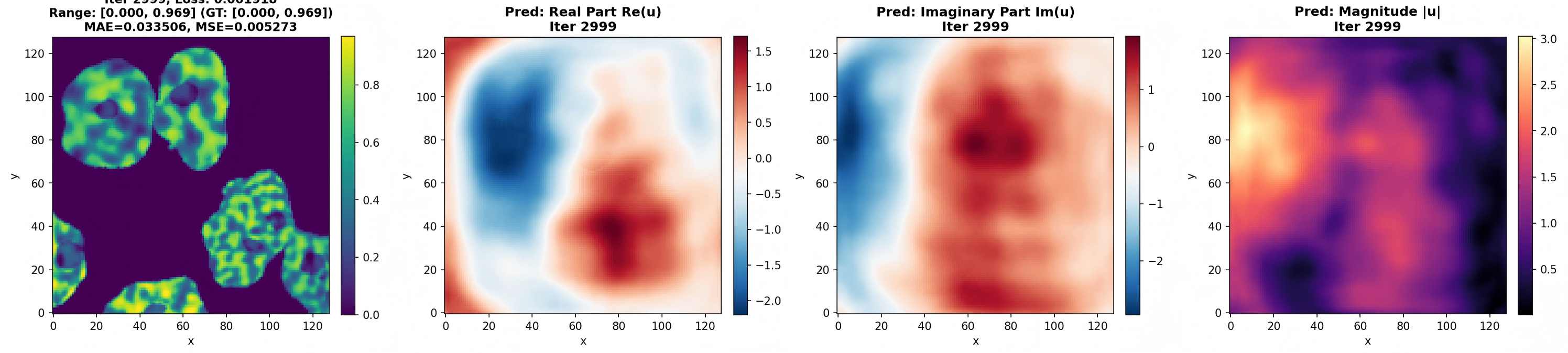}}
        \\[1ex]
        \subfigure{\includegraphics[width=\textwidth]{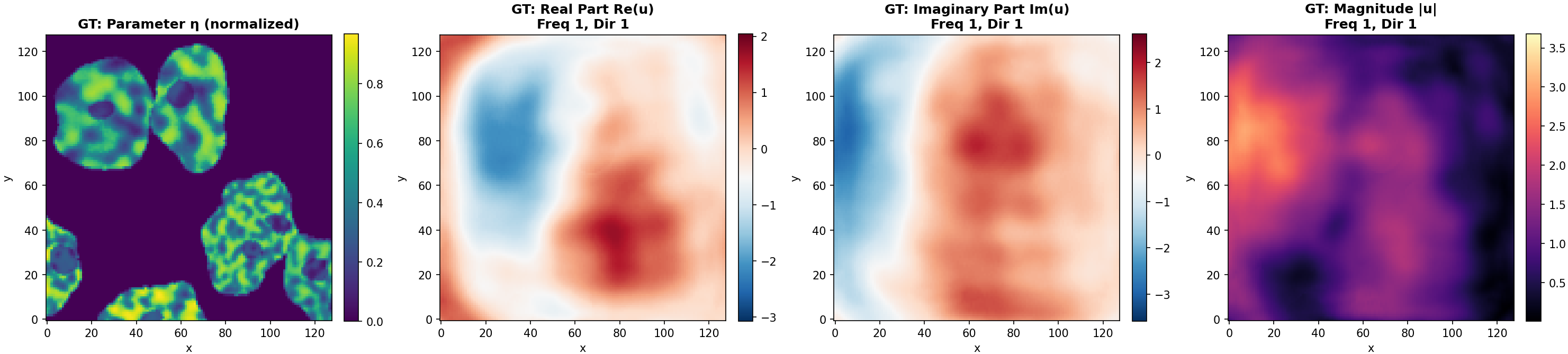}}
    \end{minipage}
    \hfill
    \begin{minipage}[c]{0.50\textwidth}
        \centering
        \subfigure[Real-time Loss Curve]{\includegraphics[width=\textwidth]{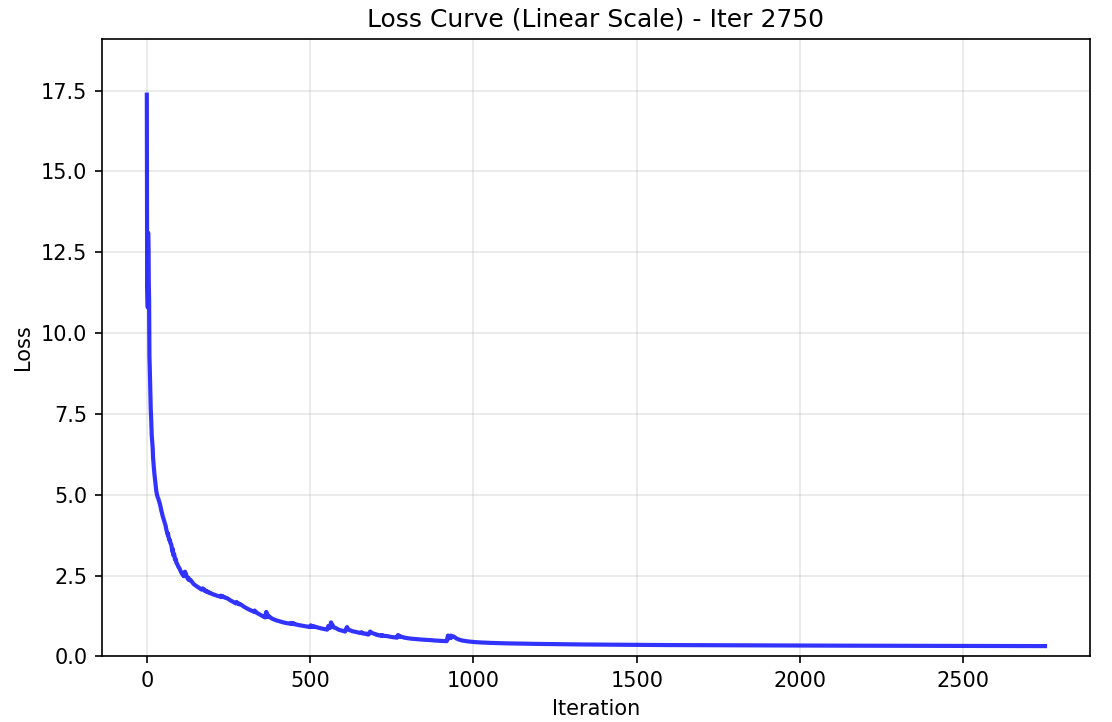}}
    \end{minipage}
    \caption{Reconstruction analysis for sparse cell structures with heterogeneous internal contrasts, demonstrating rapid localization of internal details and stable convergence.}
    \label{fig:is1_combined}
\end{figure}

\begin{figure}[!htbp]
    \centering
    \begin{minipage}[c]{0.48\textwidth}
        \centering
        \subfigure{\includegraphics[width=\textwidth]{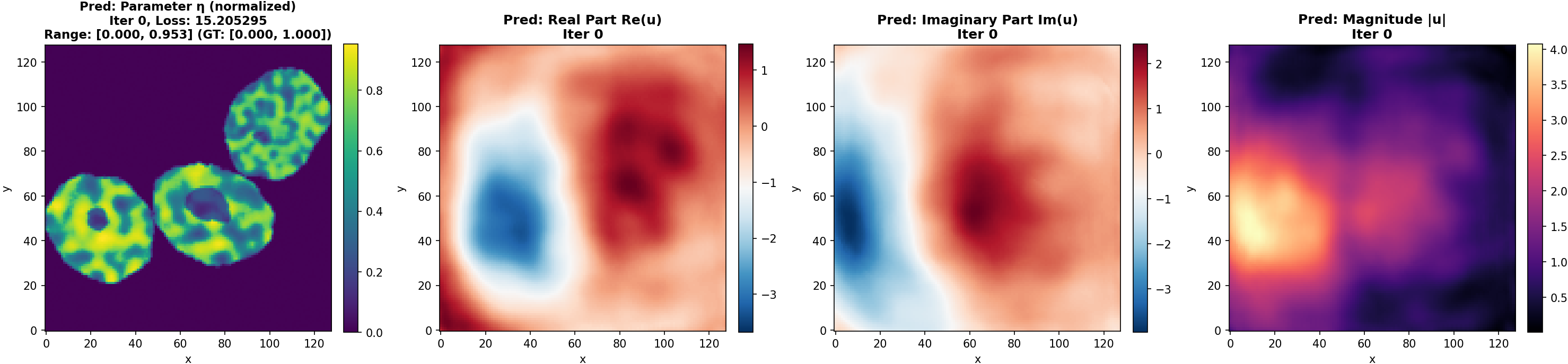}}
        \\[1ex]
        \subfigure{\includegraphics[width=\textwidth]{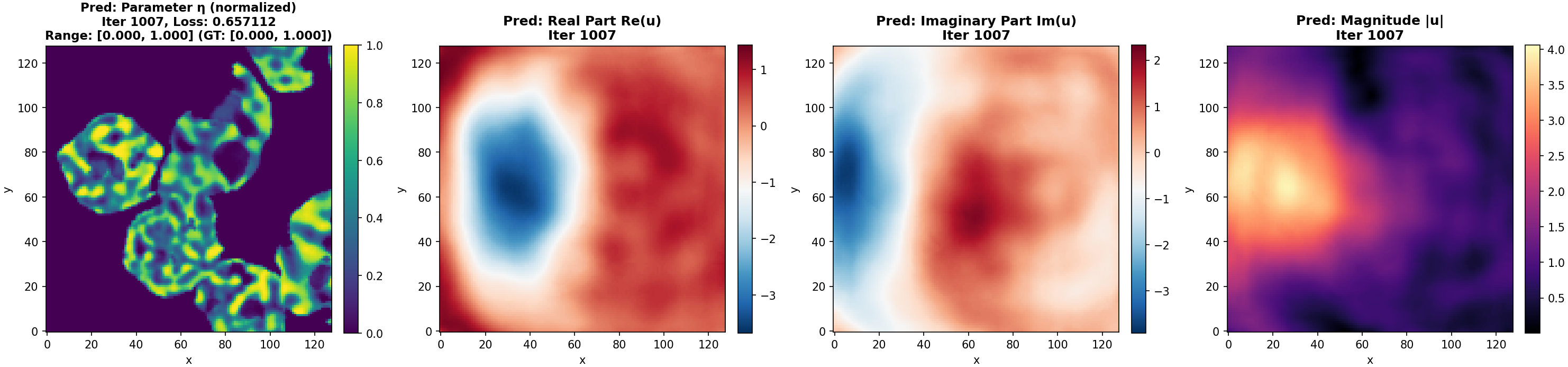}}
        \\[1ex]
        \subfigure{\includegraphics[width=\textwidth]{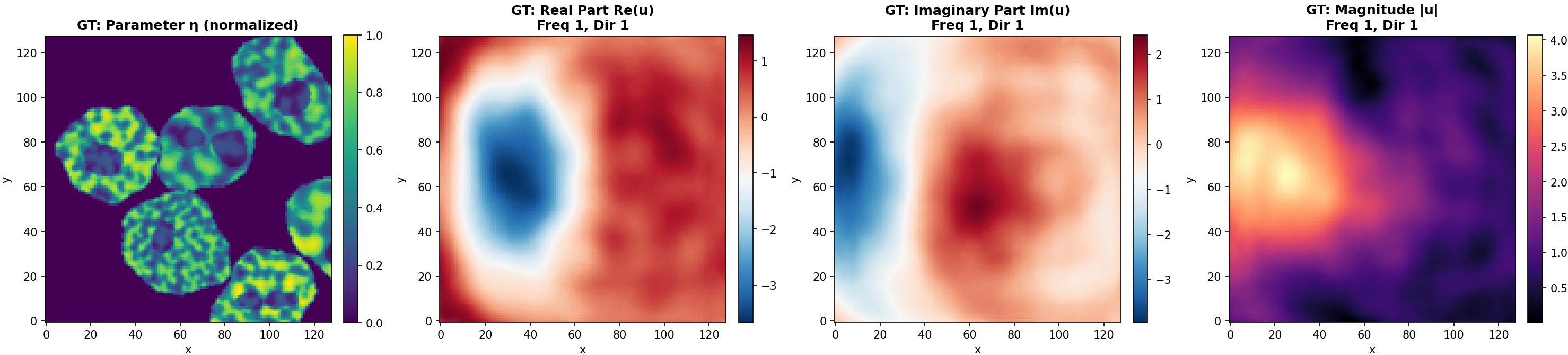}}
    \end{minipage}
    \hfill
    \begin{minipage}[c]{0.50\textwidth}
        \centering
        \subfigure[Real-time Loss Curve]{\includegraphics[width=\textwidth]{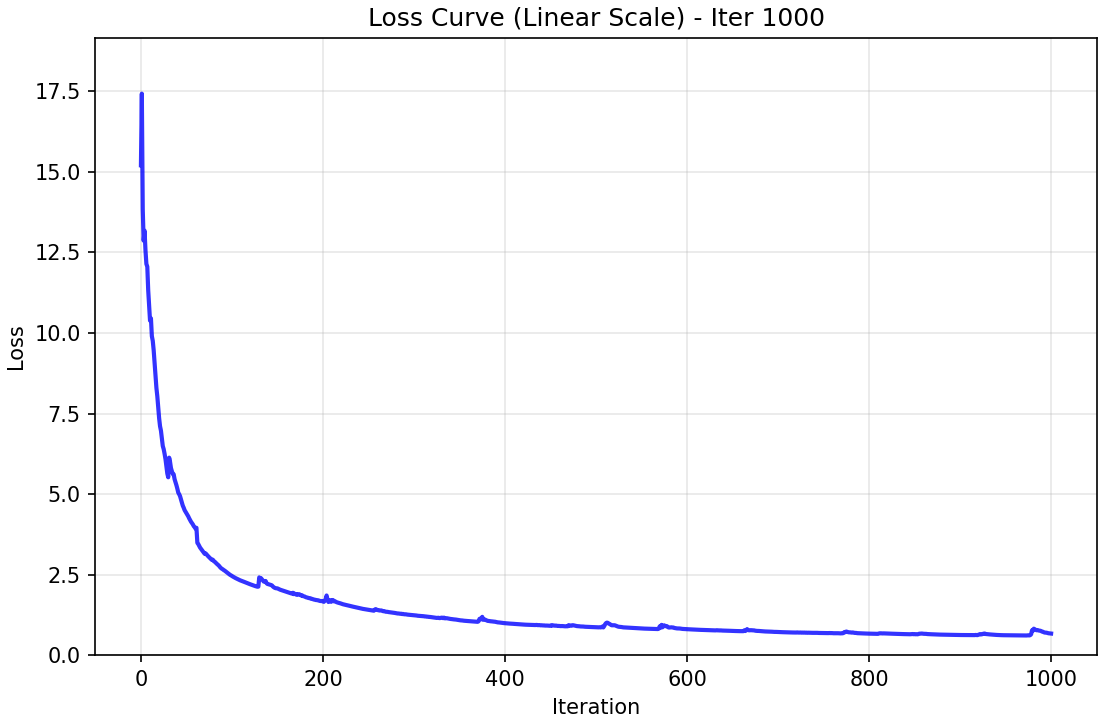}}
    \end{minipage}
    \caption{Reconstruction analysis for a tightly packed cluster of biological-like cells, confirming accurate recovery of fine details despite complex multiple scattering and inter-cellular interactions.}
    \label{fig:is2_combined}
\end{figure}

\paragraph{Inverse Navier-Stokes.}
The framework's capability in recovering complex fluid dynamics is validated in Figures~\ref{fig:ns1_combined}--\ref{fig:ns5_noise}. To systematically present the reconstruction performance, each figure shares a consistent layout: panel (a) displays the initial state (Iteration 0) comprising the ground truth, the predicted field, and the absolute difference; panel (b) shows the corresponding finalized reconstruction state; and panel (c) plots the log-scale loss curve.

\begin{figure}[htbp]
    \centering
    \subfigure[ns1: Iteration 0000]{\includegraphics[width=0.32\textwidth]{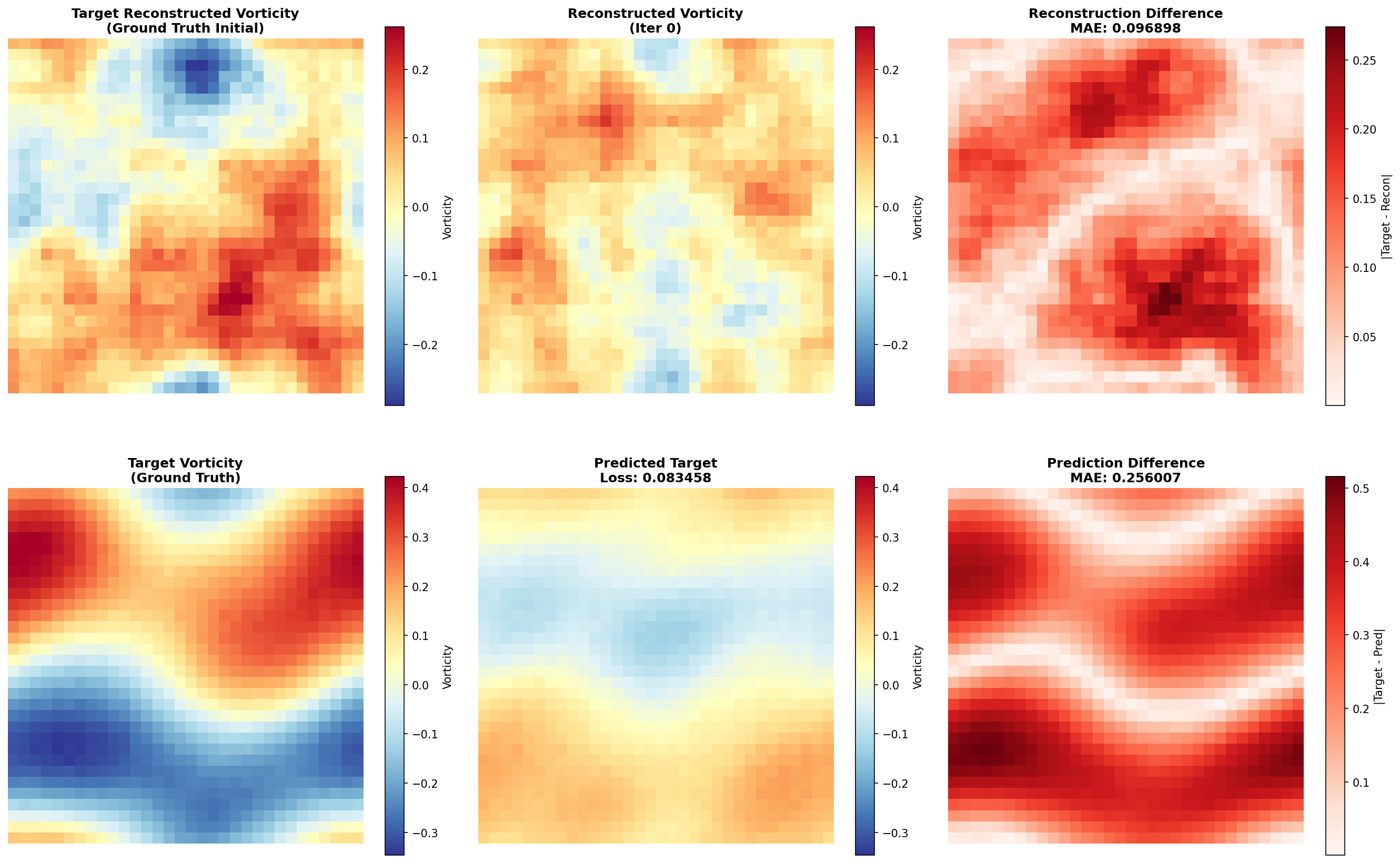}}
    \subfigure[ns1: Final (0670)]{\includegraphics[width=0.32\textwidth]{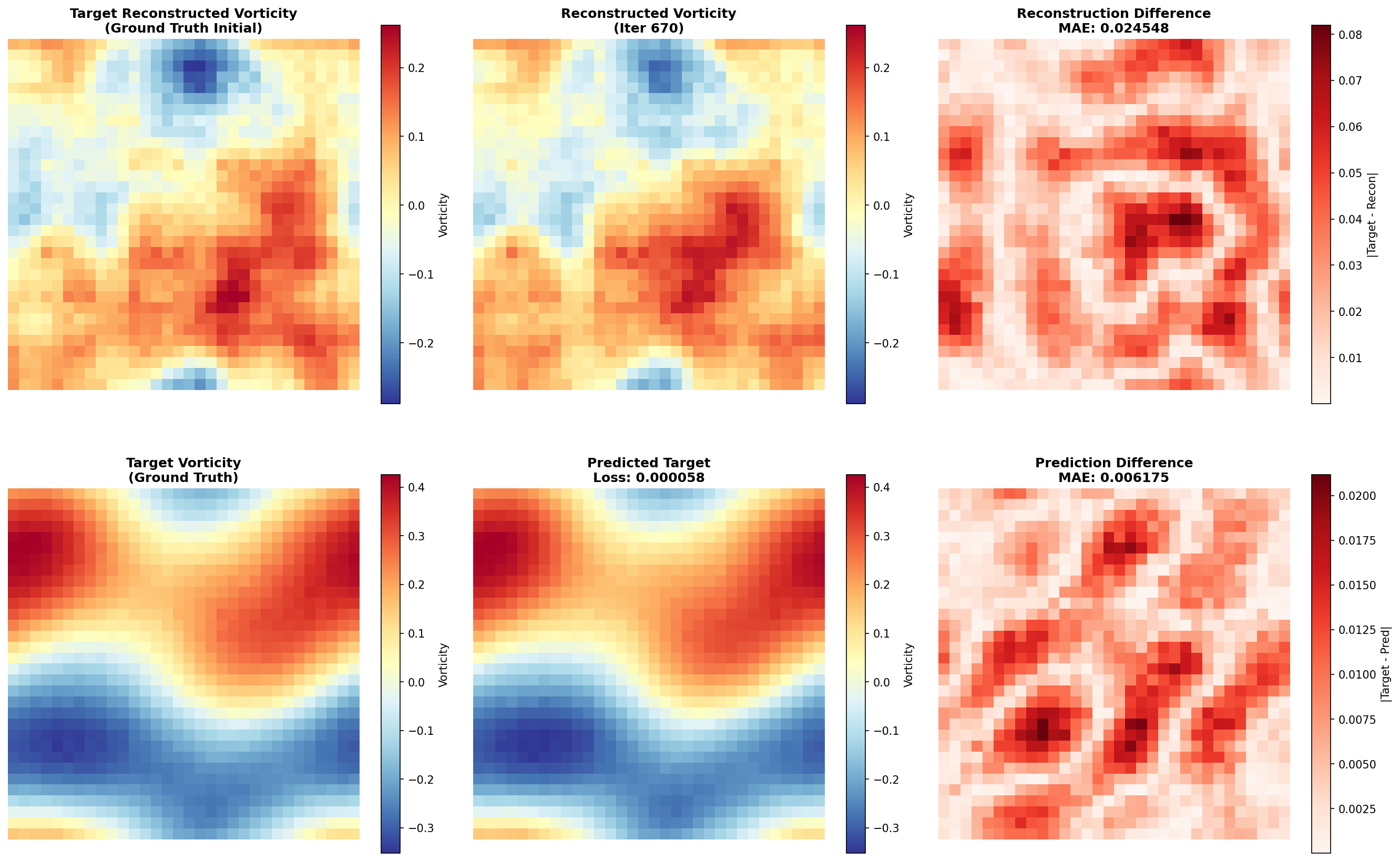}}
    \subfigure[ns1: Loss Curve]{\includegraphics[width=0.32\textwidth]{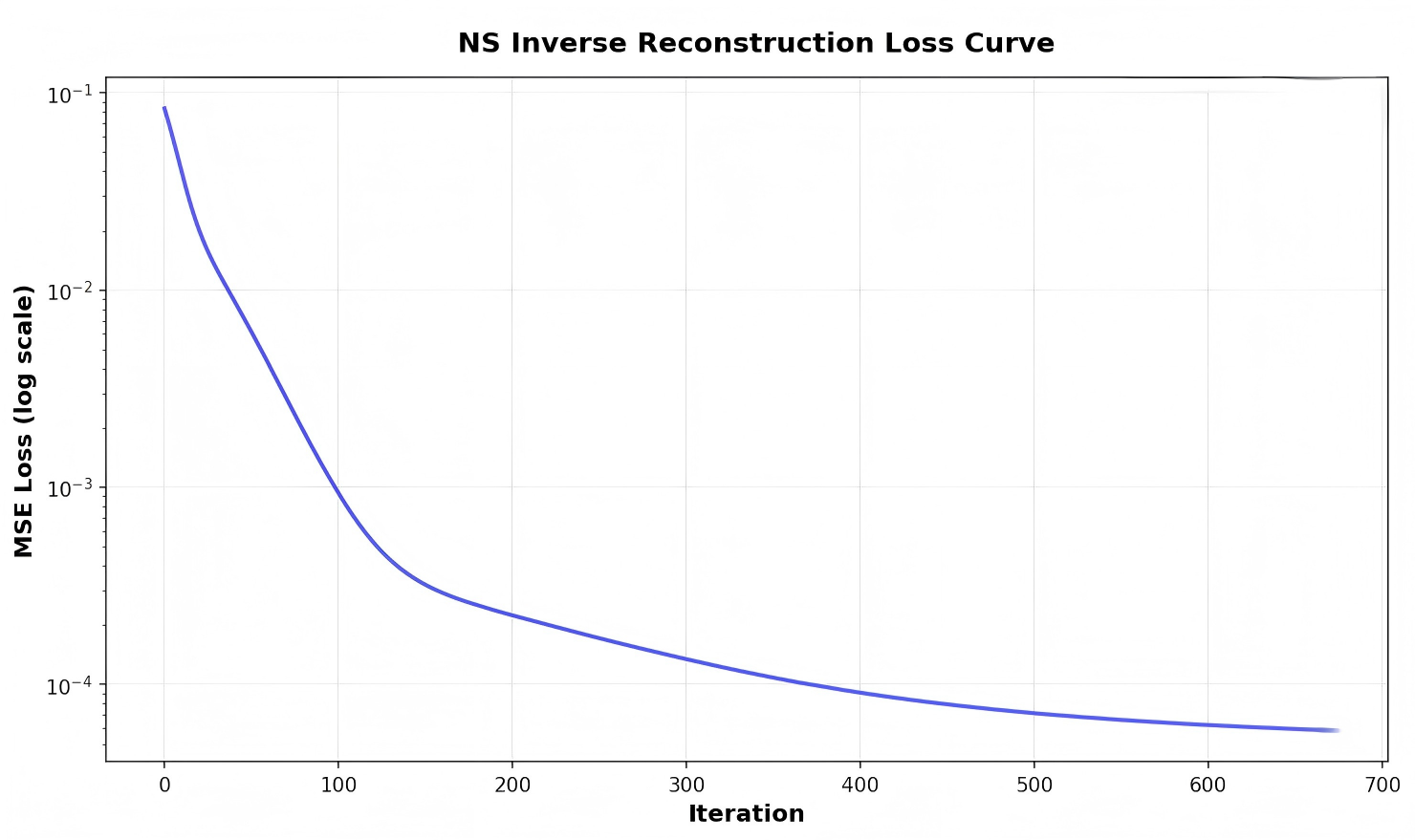}}
    \caption{Reconstruction analysis for an initial vorticity field generated via a Gaussian random field (GRF), featuring an irregular, dipole-like vortex pair. The results demonstrate the rapid, high-fidelity recovery of these fragmented fluid structures.}
    \label{fig:ns1_combined}
\end{figure}

\begin{figure}[htbp]
    \centering
    \subfigure[ns2: Iteration 0000]{\includegraphics[width=0.32\textwidth]{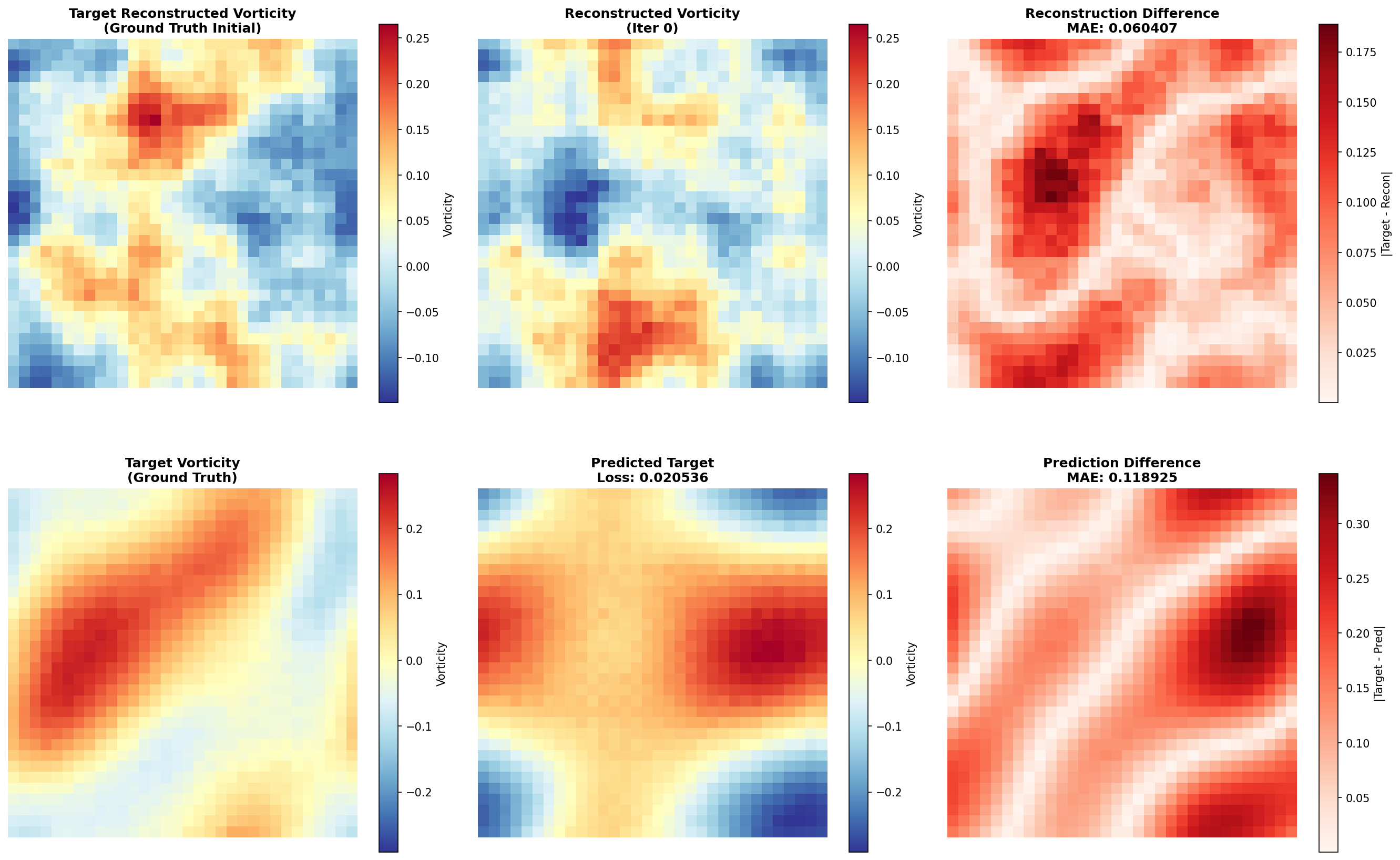}}
    \subfigure[ns2: Final (1990)]{\includegraphics[width=0.32\textwidth]{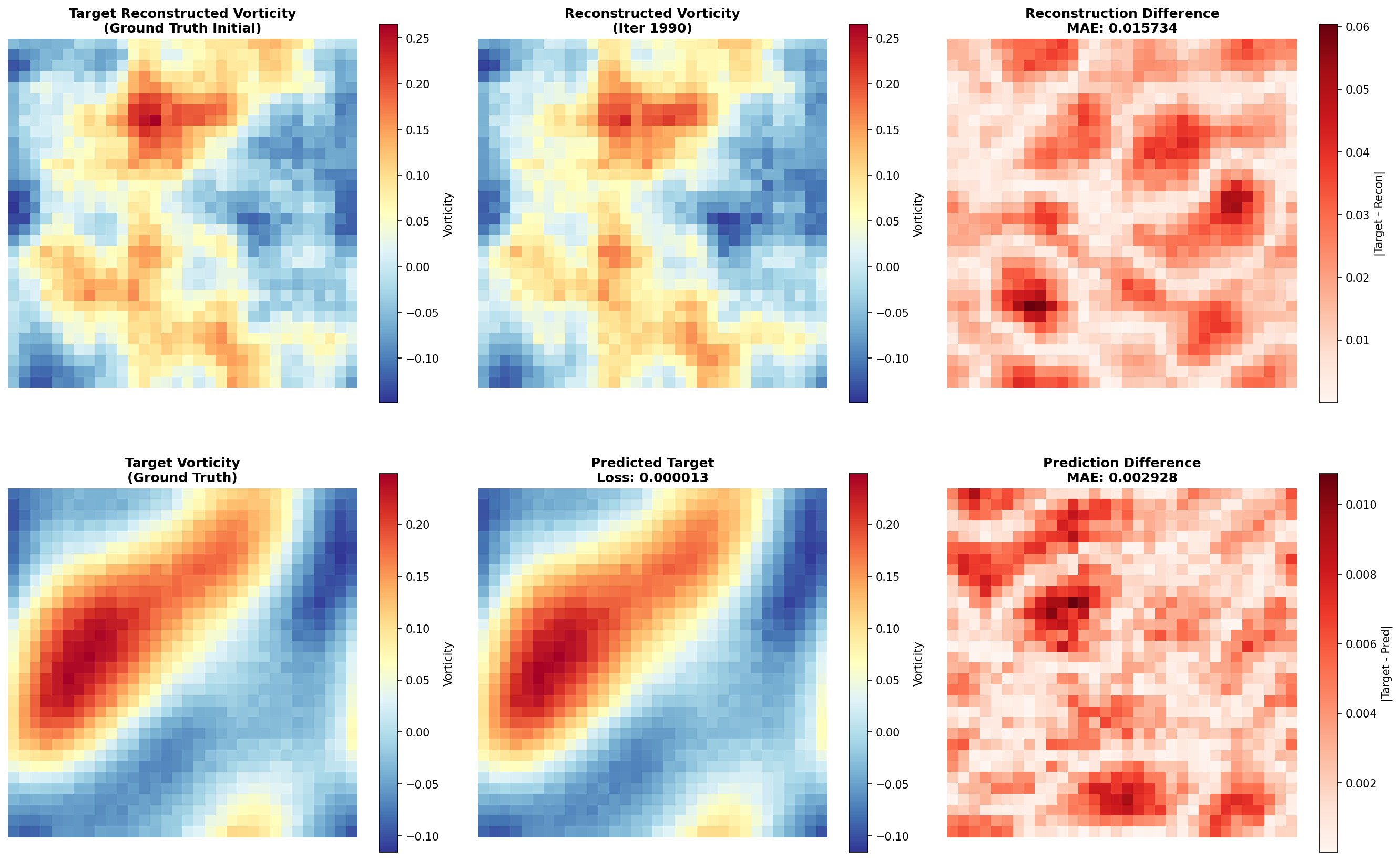}}
    \subfigure[ns2: Loss Curve]{\includegraphics[width=0.32\textwidth]{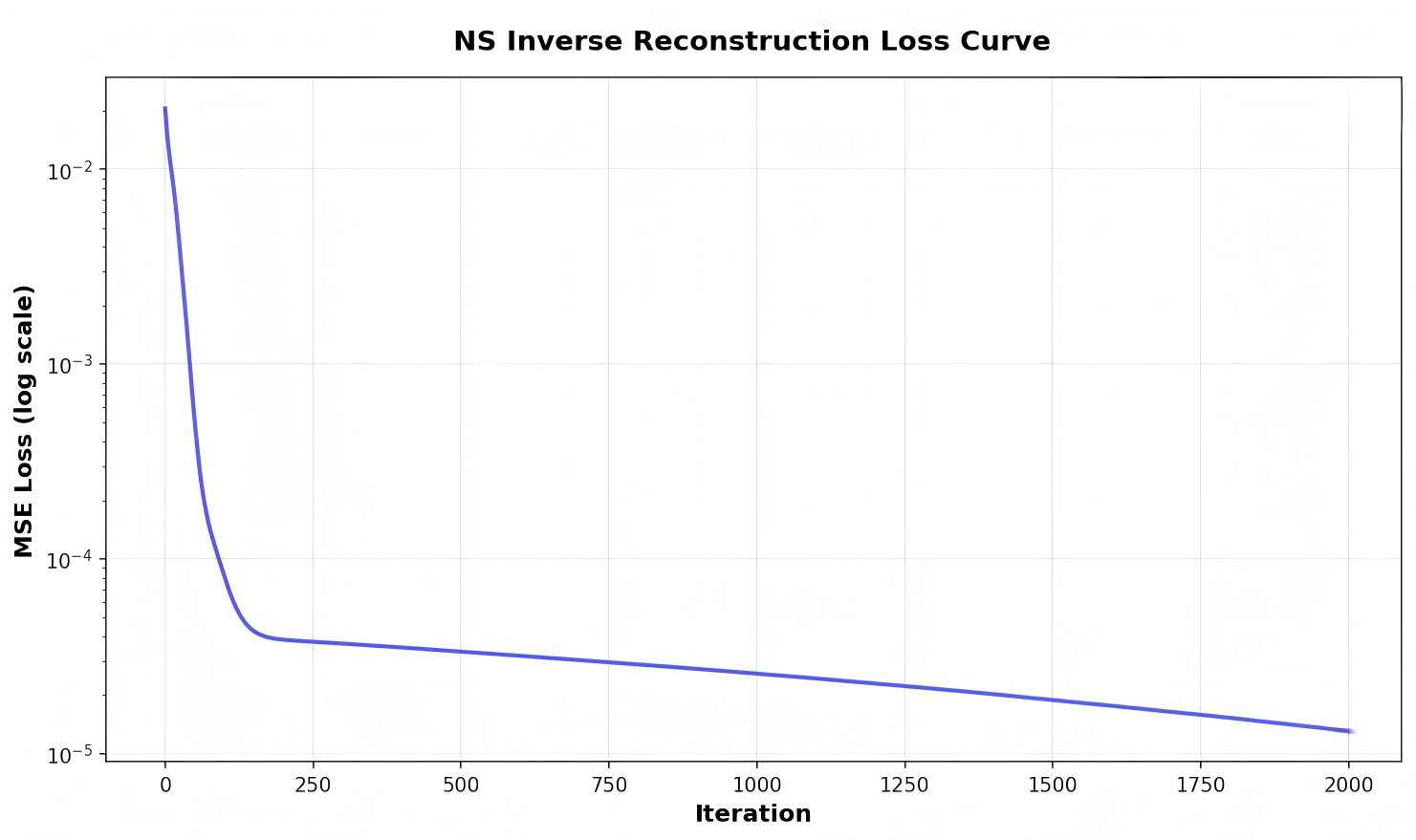}}
    \caption{Reconstruction analysis for a GRF-generated vorticity field characterized by a highly localized, dominant positive vortex amidst fragmented background flows, confirming the rapid matching of ground truth targets.}
    \label{fig:ns2_combined}
\end{figure}

\begin{figure}[htbp]
    \centering
    \subfigure[ns3: Iteration 0000]{\includegraphics[width=0.32\textwidth]{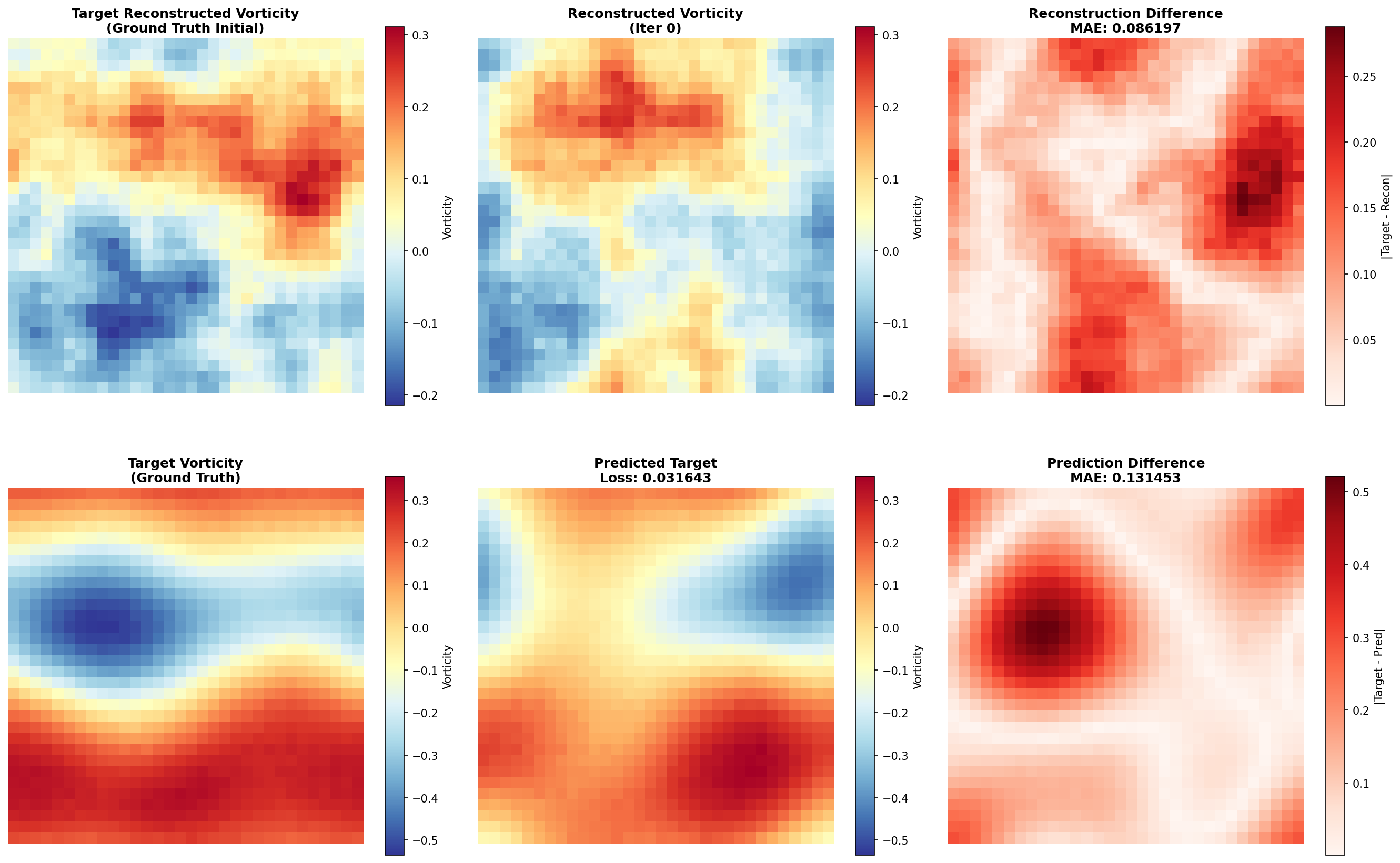}}
    \subfigure[ns3: Final (1990)]{\includegraphics[width=0.32\textwidth]{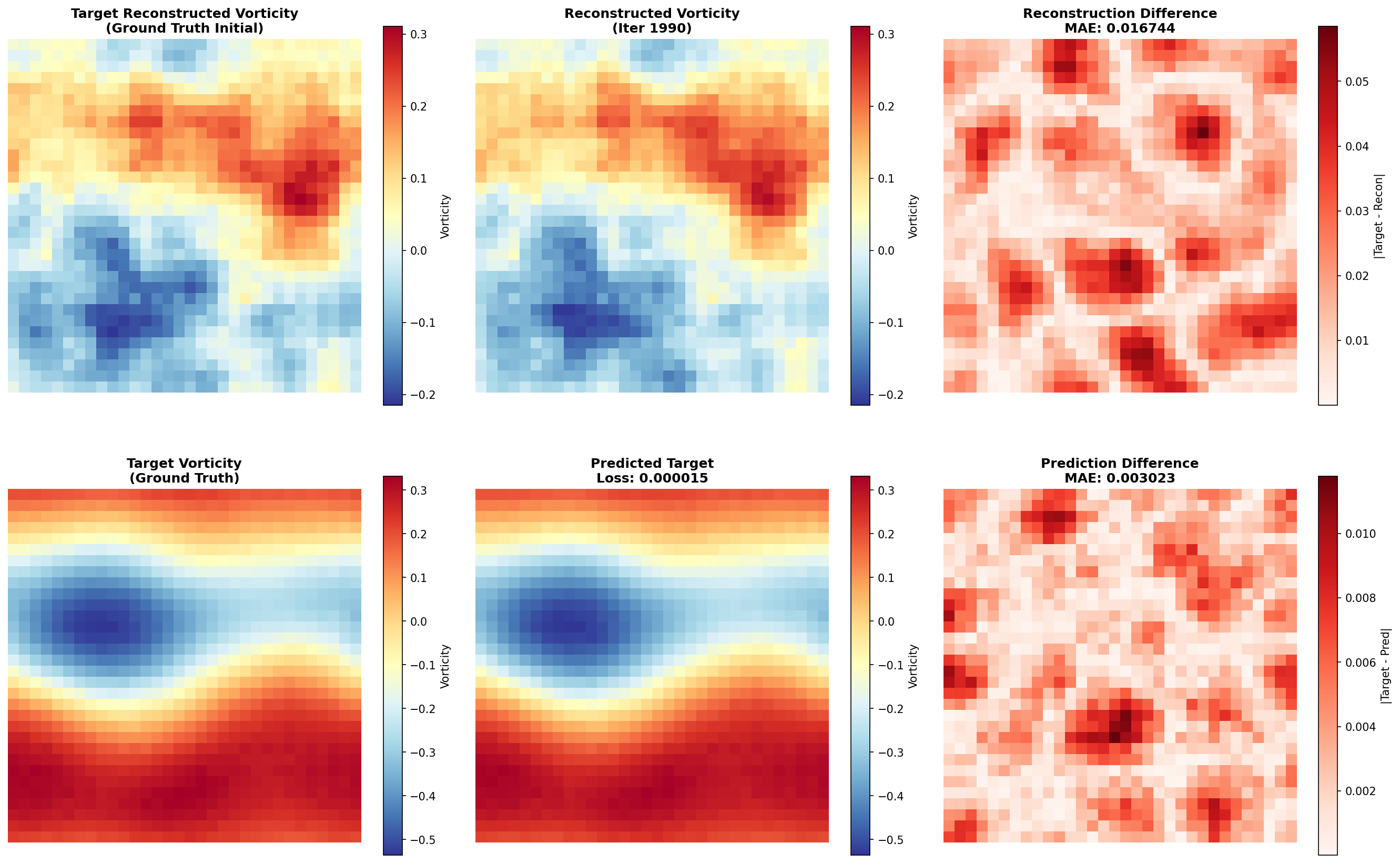}}
    \subfigure[ns3: Loss Curve]{\includegraphics[width=0.32\textwidth]{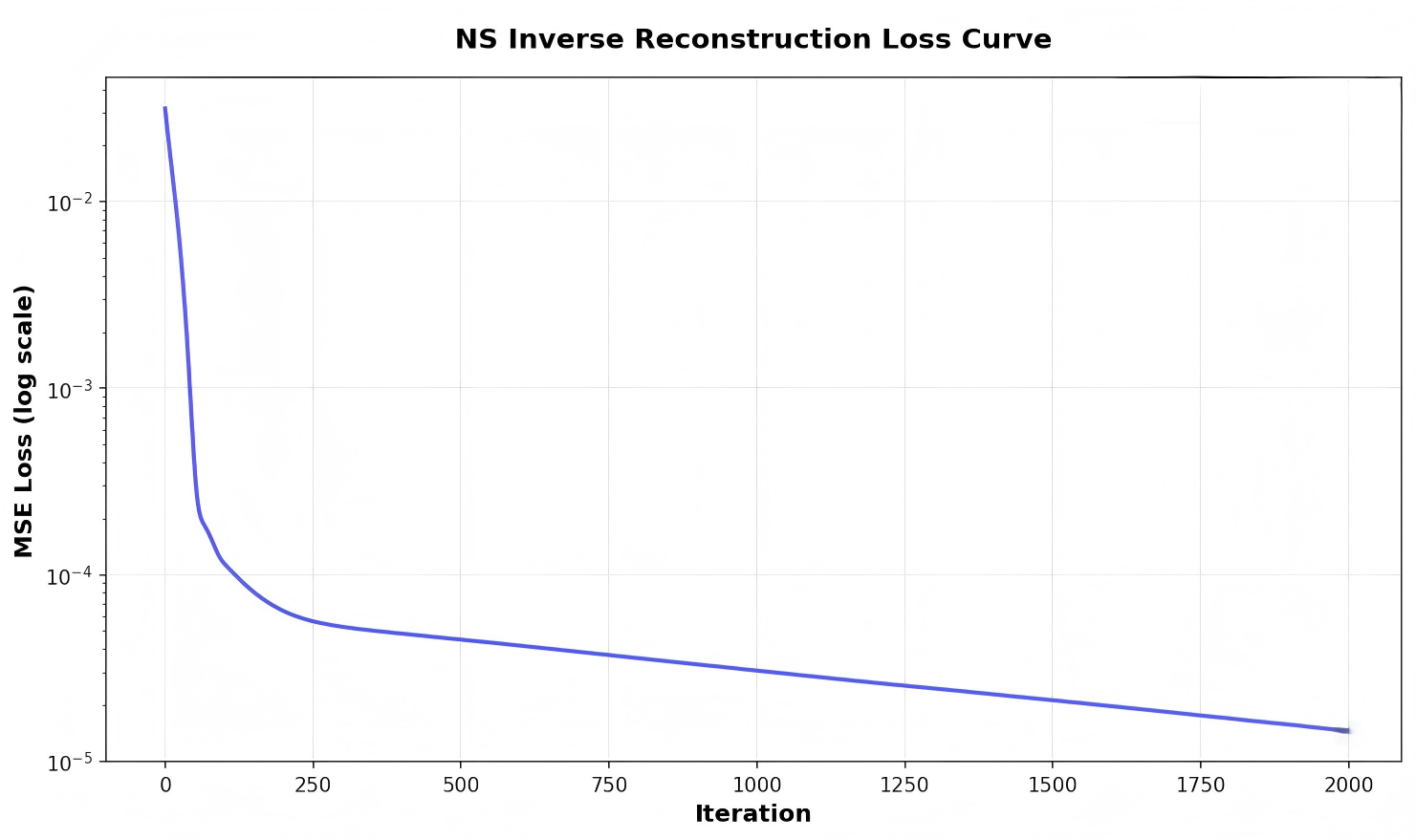}}
    \caption{Reconstruction analysis for a chaotic, multi-scale vorticity distribution sampled from a GRF. The results highlight the latent prior's effectiveness in resolving elongated fluid filaments and complex interactions.}
    \label{fig:ns3_combined}
\end{figure}

\begin{figure}[htbp]
    \centering
    \subfigure[ns4: Iteration 0000]{\includegraphics[width=0.32\textwidth]{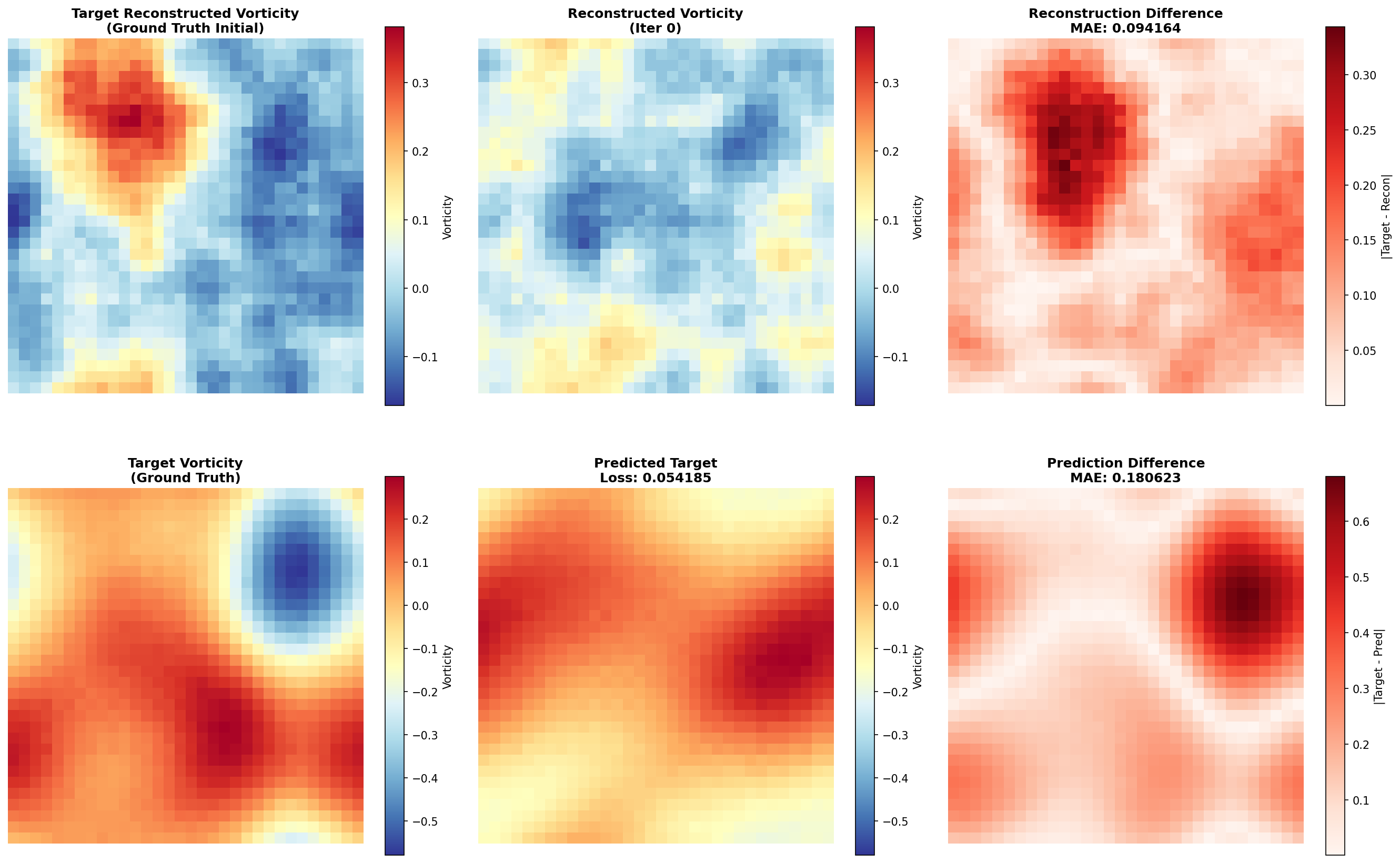}}
    \subfigure[ns4: Final (1990)]{\includegraphics[width=0.32\textwidth]{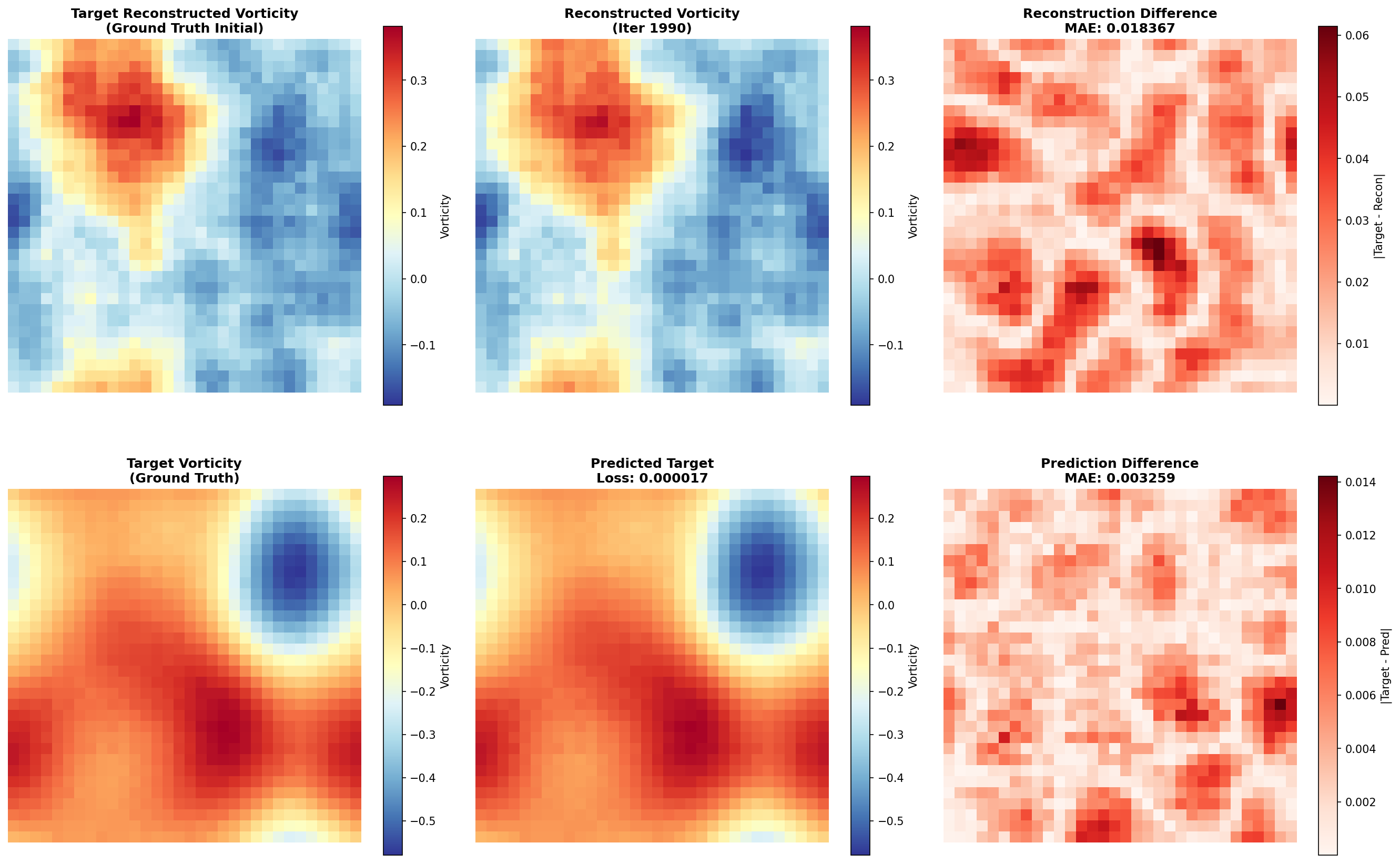}}
    \subfigure[ns4: Loss Curve]{\includegraphics[width=0.32\textwidth]{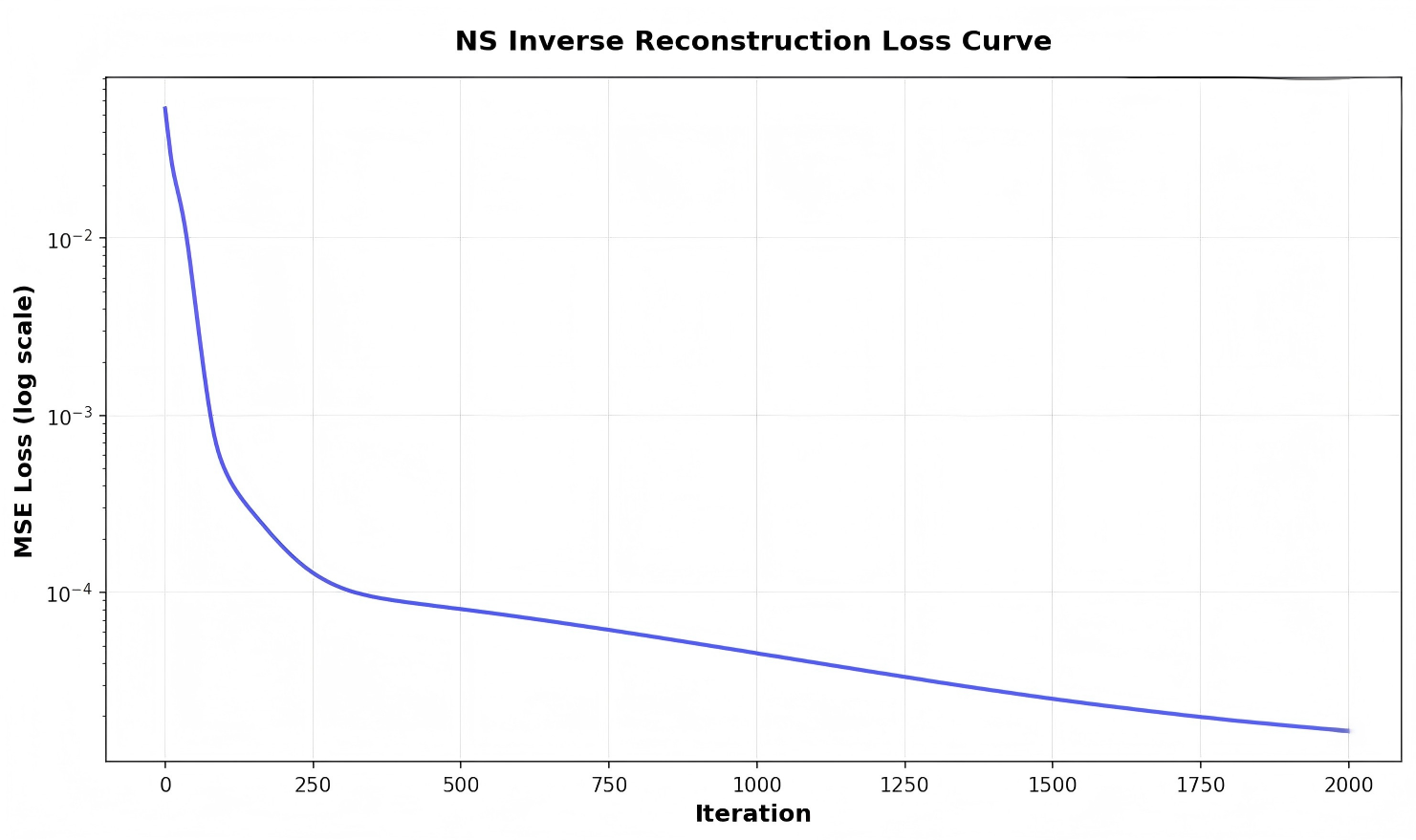}}
    \caption{Reconstruction analysis for a large-scale, coherent vortex structure originating from a GRF.}
    \label{fig:ns4_combined}
\end{figure}

\begin{figure}[htbp]
    \centering
    \subfigure[ns5: Noisy Iteration 0000]{\includegraphics[width=0.32\textwidth]{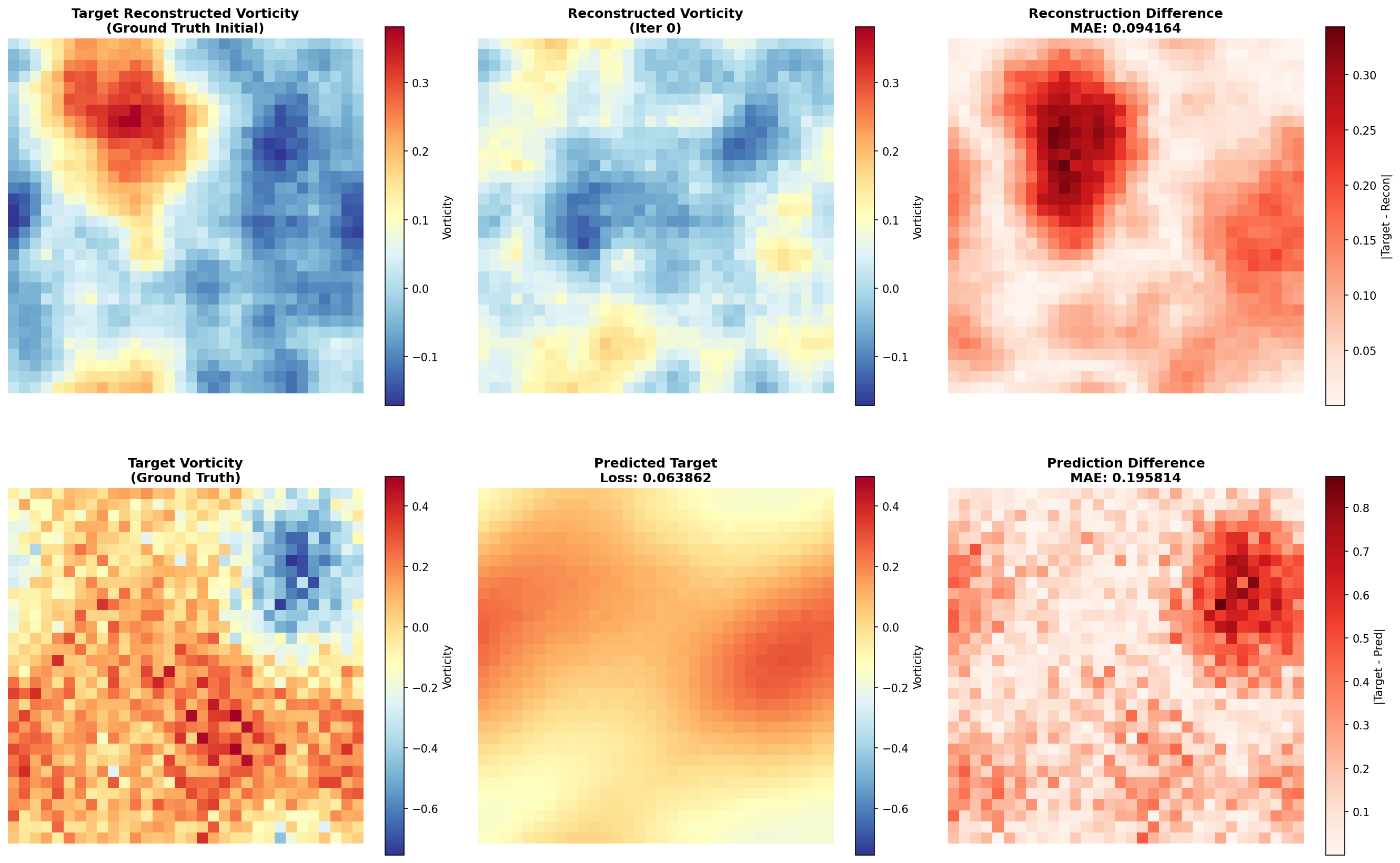}}
    \subfigure[ns5: Final Recovery (1990)]{\includegraphics[width=0.32\textwidth]{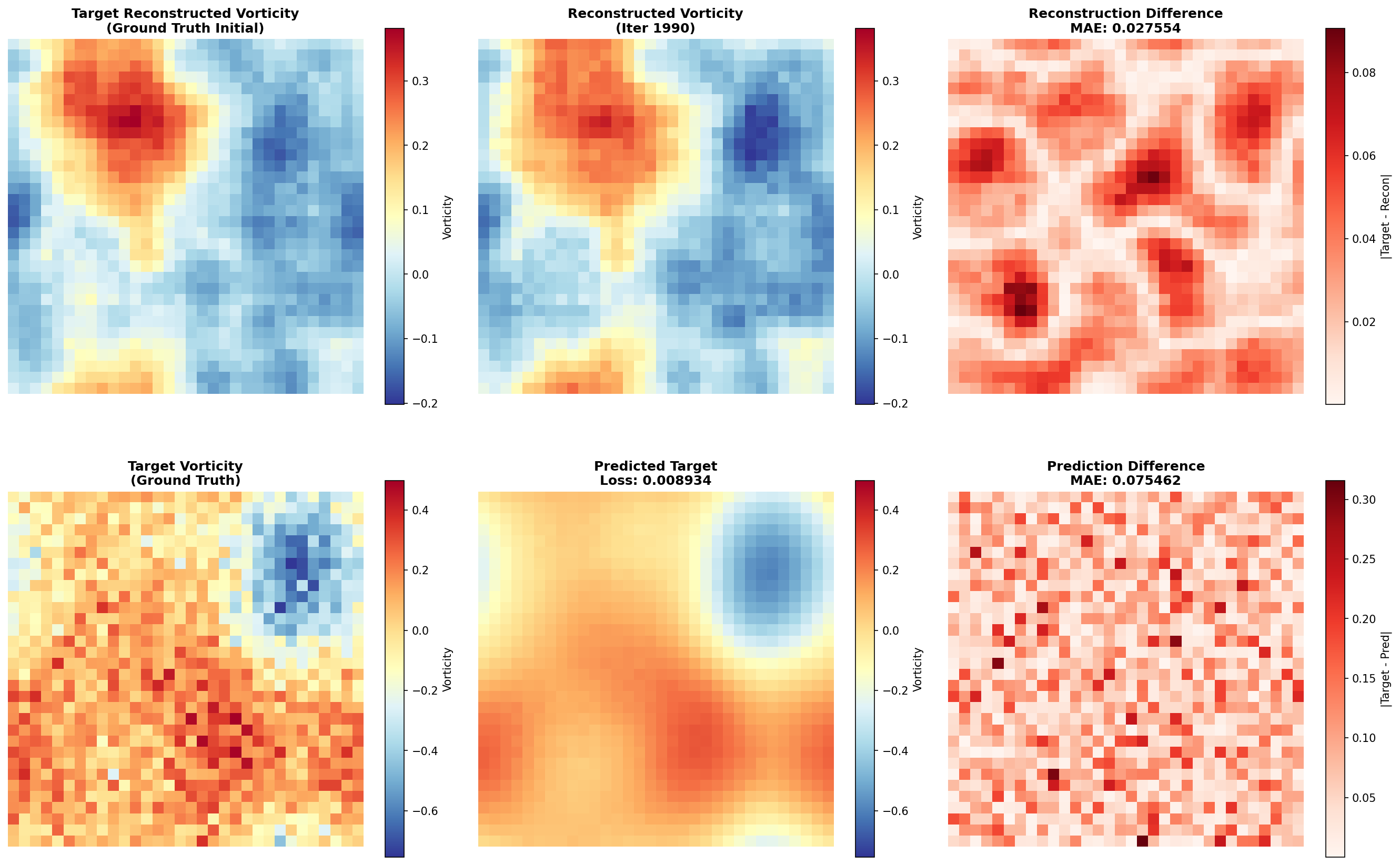}}
    \subfigure[ns5: Noisy Loss Curve]{\includegraphics[width=0.32\textwidth]{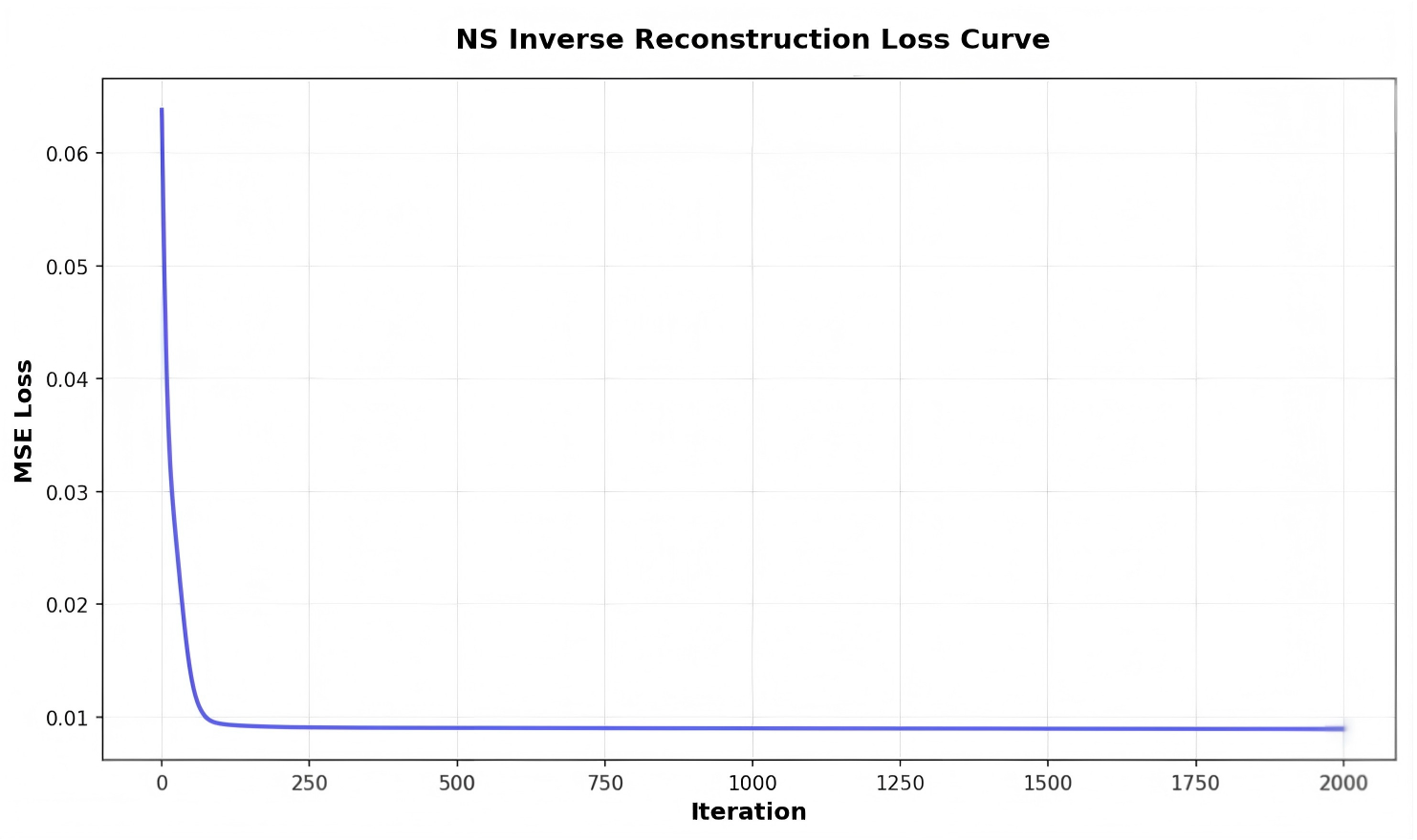}}
    \caption{Robustness analysis under 50\% observation noise.}
    \label{fig:ns5_noise}
\end{figure}

\section{Discussion and Conclusion}\label{Discussion}

In this study, we presented DiLO, a stable framework for solving PDE-constrained inverse problems. By explicitly decoupling the generative prior from the forward physical constraints, our approach leverages the versatility of plug-and-play diffusion priors and adapts to varying observational setups without computationally expensive retraining. To overcome the guidance instability that typically hinders decoupled solvers, DiLO employs a strictly deterministic latent optimization trajectory. This ensures the fast differentiable surrogate is evaluated exclusively on fully denoised parameter fields, eliminating the OOD problem while retaining the expressive capacity of generative priors. Extensive evaluations across EIT, Inverse Scattering, and Inverse N-S problems demonstrate DiLO's accuracy and efficiency. Notably, under $50\%$ Gaussian noise, the latent prior effectively suppresses severe measurement noise. Despite these advantages, a limitation of the current framework is its fundamental reliance on the accuracy of the pretrained physical surrogate. Because the deterministic optimization is guided entirely by surrogate-derived gradients, generalization errors in the neural operator, such as encountering complex physical geometries outside its training distribution, can bias the final reconstructed state.

Looking forward, extending DiLO to high-dimensional 3D volumetric domains, such as full-waveform seismic inversion or 3D medical imaging, is a natural progression. We also plan to incorporate multi-modality fusion into the deterministic trajectory, for instance, utilizing high-resolution structural CT priors to condition EIT reconstructions. Ultimately, to address the current surrogate bottleneck, integrating emerging large-scale physics foundation models as forward evaluators could mitigate these generalization errors and enable zero-shot generalization across complex geometries, paving the way for real-time, physics-consistent imaging in critical clinical and engineering applications.

\section{Acknowledgement}

Guang Lin would like to thank the support of National Science Foundation (DMS-2533878, DMS-2053746, DMS-2134209, ECCS-2328241, CBET-2347401 and OAC-2311848), and U.S.~Department of Energy (DOE) Office of Science Advanced Scientific Computing Research program DE-SC0023161, the SciDAC LEADS Institute, and DOE–Fusion Energy Science, under grant number: DE-SC0024583.
    
\bibliographystyle{unsrt}
\bibliography{references}

\newpage
\appendix
\section{Proof of Lemma \ref{lemma:hessian_decomp}}
\label{app:hessian_derivation}

\begin{proof}
    Let $\mathcal{L}_{\text{exact}}(z_T) = \mathcal{L}_{\text{phys}}(\mathcal{M}(z_T))$. The proof proceeds by evaluating the exact first- and second-order Fréchet derivatives of the composite functional through continuous variational analysis.

    \vspace{0.5em}
    \noindent \textit{Step 1: First-order variations and the adjoint state.} \\
    Recall the physical objective $\mathcal{L}_{\text{phys}}(\sigma) = \frac{1}{2} \|u - V_{\text{obs}}\|_{L^2(\partial \Omega)}^2$, subject to the forward boundary value problem $\nabla \cdot (\sigma \nabla u) = 0$ in $\Omega$ with $\sigma \frac{\partial u}{\partial \mathbf{n}} = g$ on $\partial \Omega$.
    Introducing the first-order adjoint state $w \in H^1(\Omega)$, the standard adjoint state method yields the Lagrangian variation with respect to the state $u$, leading to the adjoint equation:
    \begin{equation}
        \nabla \cdot (\sigma \nabla w) = 0 \text{ in } \Omega, \quad \sigma \frac{\partial w}{\partial \mathbf{n}} = u - V_{\text{obs}} \text{ on } \partial \Omega.
    \end{equation}
    The Fréchet derivative of $\mathcal{L}_{\text{phys}}$ with respect to $\sigma$ is thus given by the spatial inner product $\nabla_\sigma \mathcal{L}_{\text{phys}}(\sigma) = -\nabla u \cdot \nabla w$. By the multivariate chain rule, the continuous first-order gradient with respect to the latent variable $z_T \in \mathcal{Z}$ unfolds as:
    \begin{equation} \label{eq:app_first_grad}
    \begin{aligned}
        \nabla_{z_T} \mathcal{L}_{\text{exact}}(z_T) &= J_{\mathcal{M}}(z_T)^T \nabla_\sigma \mathcal{L}_{\text{phys}}(\mathcal{M}(z_T)) \\
        &= J_{\mathcal{M}}(z_T)^T \big( -\nabla u \cdot \nabla w \big),
    \end{aligned}
    \end{equation}
    where $J_{\mathcal{M}}(z_T) = \partial \mathcal{M}(z_T) / \partial z_T$ denotes the full Jacobian of the generative mapping.

    \vspace{0.5em}
    \noindent \textit{Step 2: Second-order physical variations.} \\
    Differentiating \eqref{eq:app_first_grad} with respect to $z_T$ yields the exact composite Hessian $\mathbf{H}_{z_T} \in \mathbb{R}^{\dim(\mathcal{Z}) \times \dim(\mathcal{Z})}$:
    \begin{equation} \label{eq:app_hessian_expansion}
        \mathbf{H}_{z_T} = J_{\mathcal{M}}(z_T)^T \mathbf{H}_{\text{phys}}(\sigma) J_{\mathcal{M}}(z_T) + \sum_{i} \big(-\nabla u \cdot \nabla w\big)_i \nabla_{z_T}^2 \mathcal{M}_i(z_T).
    \end{equation}
    The physical Hessian operator $\mathbf{H}_{\text{phys}}(\sigma)$ is characterized by its action on a perturbation $\hat{\sigma}$ via the directional Fréchet derivative of the continuous gradient:
    \begin{equation} \label{eq:app_hessian_action}
        \mathbf{H}_{\text{phys}} \hat{\sigma} = \delta_{\hat{\sigma}} \big( -\nabla u \cdot \nabla w \big) = -\nabla \hat{u} \cdot \nabla w - \nabla u \cdot \nabla \hat{w},
    \end{equation}
    where the tangent state $\hat{u} \in H^1(\Omega)$ is the unique weak solution to the linearized forward equation:
    \begin{equation} \label{eq:app_tangent_pde}
    \begin{cases}
        \nabla \cdot (\sigma \nabla \hat{u}) = -\nabla \cdot (\hat{\sigma} \nabla u), \quad &\text{in } \Omega, \\
        \sigma \frac{\partial \hat{u}}{\partial \mathbf{n}} = -\hat{\sigma} \frac{\partial u}{\partial \mathbf{n}}, \quad &\text{on } \partial \Omega.
    \end{cases}
    \end{equation}
    By linearizing the first-order adjoint equation and accounting for the state-dependent boundary variation $\delta_{\hat{\sigma}} (u - V_{\text{obs}}) = \hat{u}$, the second-order adjoint state $\hat{w} \in H^1(\Omega)$ is determined as the solution to:
    \begin{equation} \label{eq:app_second_adjoint_pde}
    \begin{cases}
        \nabla \cdot (\sigma \nabla \hat{w}) = -\nabla \cdot (\hat{\sigma} \nabla w), \quad &\text{in } \Omega, \\
        \sigma \frac{\partial \hat{w}}{\partial \mathbf{n}} = \hat{u} - \hat{\sigma} \frac{\partial w}{\partial \mathbf{n}}, \quad &\text{on } \partial \Omega.
    \end{cases}
    \end{equation}
    These characterizations provide the explicit bilinear forms required for the spectral norm analysis.
    
    \vspace{0.5em}
    \noindent \textit{Step 3: Unrolling the generative curvature.} \\
    The generative mapping $\mathcal{M}(z_T) = \mathcal{D}(z_0)$ is formed by the composition of $T$ deterministic diffusion steps $z_{t-1} = c_t z_t + d_t \boldsymbol{\epsilon}_\theta(z_t, t)$. The corresponding step-wise Jacobian and Hessian are given by:
    \begin{equation}
        J_t = c_t \mathbf{I} + d_t \nabla_{z_t} \boldsymbol{\epsilon}_\theta(z_t, t), \qquad \mathcal{H}_t = d_t \nabla_{z_t}^2 \boldsymbol{\epsilon}_\theta(z_t, t).
    \end{equation}
    By applying the multivariate chain rule for second derivatives to the composed mapping $\mathcal{M}(z_T)$, the tensor derivative distributes across the discrete sequence. Specifically, differentiating the component-wise gradient $\nabla_{z_T} \mathcal{M}_i(z_T) = \left( \prod_{s=1}^T J_s \right)^T \nabla_{z_0} \mathcal{D}_i(z_0)$ via the product rule exactly expands the generative curvature by propagating the local step-wise curvatures $\mathcal{H}_t$ and the spatial Hessian $\nabla_{z_0}^2 \mathcal{D}_i$ backward through the accumulated Jacobians:
    \begin{equation}
        \begin{aligned}
            \nabla_{z_T}^2 \mathcal{M}_i(z_T) &= \nabla_{z_T} \left( \left( \prod_{s=1}^T J_s \right)^T \nabla_{z_0} \mathcal{D}_i(z_0) \right) \\
            &= \left( \prod_{s=1}^T J_s \right)^T \Big[ \nabla_{z_0}^2 \mathcal{D}_i(z_0) \Big] \left( \prod_{s=1}^T J_s \right) \\
            &\quad + \sum_{t=1}^T \left( \prod_{s=t+1}^T J_s \right)^T \Bigg[ \sum_k \left( J_{\mathcal{D}}(z_0) \prod_{m=1}^{t-1} J_m \right)_{i,k} \mathcal{H}_{t,k} \Bigg] \left( \prod_{s=t+1}^T J_s \right).
        \end{aligned}
    \end{equation}
    Substituting this exact unrolled tensor alongside the physical Hessian action into \eqref{eq:app_hessian_expansion} establishes the analytical decomposition.
\end{proof}
    
\end{document}